\documentclass[11pt]{article}

\usepackage[margin=1in]{geometry}
\usepackage{amsmath,amssymb,amsthm,mathtools,bm}
\usepackage{enumitem}
\usepackage{booktabs}
\usepackage{graphicx}
\usepackage[percent]{overpic}
\usepackage[font=small,labelfont=bf,labelsep=space]{caption}
\usepackage{xcolor}
\usepackage{placeins}
\usepackage[hidelinks]{hyperref}
\usepackage{microtype}

\newtheorem{theorem}{Theorem}[section]
\newtheorem{proposition}[theorem]{Proposition}
\newtheorem{lemma}[theorem]{Lemma}
\newtheorem{corollary}[theorem]{Corollary}
\theoremstyle{definition}
\newtheorem{definition}[theorem]{Definition}
\newtheorem{remark}[theorem]{Remark}
\newtheorem{example}[theorem]{Example}

\newcommand{\R}{\mathbb{R}}
\newcommand{\Nset}{\mathcal{N}}
\newcommand{\E}{\mathcal{E}}
\newcommand{\one}{\mathbf{1}}
\newcommand{\diag}{\operatorname{diag}}
\newcommand{\Comp}{\operatorname{Comp}}

\title{Exact Fragmentation and Terminal-Cluster Selection in\\
One-Dimensional Finite-Range Normalized Alignment}
\author{Jiangning Chen\\[0.4em]
\small Independent Researcher\\
\small Lexington, MA, USA\\
\small Corresponding author: \href{mailto:chjn1990531@gmail.com}
{\texttt{chjn1990531@gmail.com}}}
\date{}

\hypersetup{
    pdftitle={Exact Fragmentation and Terminal-Cluster Selection in One-Dimensional Finite-Range Normalized Alignment},
    pdfauthor={Jiangning Chen}
}

\begin{document}
\maketitle

\begin{abstract}
Finite-range alignment can end either in a single flock or in several
noninteracting clusters, yet convergence results rarely determine which
outcome follows from a given finite-particle state. We study a
one-dimensional normalized alignment model with a hard interaction cut-off
and obtain exact predictions in an expansive regime. Velocity order is
invariant, so pair separations are nondecreasing and the communication graph
evolves through finitely many irreversible edge deletions. On each fixed
graph, the group inverse of the random-walk Laplacian gives the total relative
displacement remaining before relaxation. Comparing this displacement with
the available interaction slack selects the next deletion and yields a finite
recursion for the complete switching sequence, terminal partition, limiting
cluster velocities, and internal geometry. For path configurations, an
explicit Green kernel gives a necessary-and-sufficient fragmentation
criterion and a sharp critical alignment rate, including asymptotic boundary
contact at criticality. A spectral-geometric condition extends the theory to
an open set of initially nonordered velocities, while a common self-weight
extension covers both self-excluding and self-including local averages.
Numerical computations reproduce the thresholds and multi-event cascades.
The results provide an exact finite-size theory of fragmentation and terminal
state selection for a class of finite-range interacting particle systems.
\end{abstract}

\noindent\textbf{Keywords.} Interacting particle systems, finite-range
alignment, fragmentation, cluster selection, Motsch--Tadmor model, group
inverse.

\smallskip
\noindent\textbf{MSC2020.} 34D05, 37N25, 92D45.

\section{Introduction}

The emergence of coherent motion from local interactions is a basic
collective phenomenon in systems of self-propelled particles
\cite{Reynolds1987,Vicsek1995,CuckerSmale2007}. With long-range influence,
the main question is often whether all particles approach one flock. A hard
interaction cut-off introduces a competing outcome: particles can move out
of range before their velocities equilibrate, permanently separating the
population into several clusters. The final state is then selected by a
feedback loop between relaxation and the loss of interactions.

Normalized alignment makes this selection problem particularly subtle. The
Motsch--Tadmor normalization compensates for variations in local density, but
the resulting interaction matrix is generally nonsymmetric and ordinary
momentum need not be conserved \cite{MotschTadmor2011,MotschTadmor2014}.
Existing results establish flocking under cut-off or short-range interactions
and describe multicluster asymptotics
\cite{Jin2018,MoralesPeszekTadmor2019,HaJinZhang2025}. Such convergence
theorems do not usually identify, for a prescribed finite configuration,
which links disappear, whether fragmentation occurs, or which particles form
each terminal cluster. The purpose of this paper is to obtain that finite-size
information exactly in a nontrivial invariant regime.

We consider the one-dimensional self-excluding normalized average introduced
numerically in \cite{ChenZhangLim2013}. It is closely related to the cut-off
Motsch--Tadmor model studied in \cite{Jin2018}, but the questions and
conclusions are different. Rather than derive a sufficient condition for a
single flock, we determine the complete sequence of interaction losses and
the resulting terminal clusters whenever the initial state is expansive. In
particular, for path configurations we obtain a necessary-and-sufficient
fragmentation criterion and a sharp critical alignment rate. The equality
case describes asymptotic contact with the interaction boundary and is
distinct from finite-time fragmentation.

The mechanism combines geometry with integrated relaxation. When positions
and velocities have the same order, the velocity order is forward invariant.
All pair separations are therefore nondecreasing, so an interaction can be
lost but cannot be created or recovered. On each interval with fixed
interactions, the group inverse of the random-walk Laplacian gives the total
relative displacement remaining before velocity relaxation. Comparing this
quantity with the unused interaction range decides whether the current graph
is terminal. If it is not, the first predicted boundary hit is the next
actual deletion. Repeating the calculation yields the terminal partition,
the limiting velocity of every cluster, and its limiting internal geometry
after at most $|E(0)|$ events.

For an initial path, the adjacent velocity differences evolve through a
symmetric tridiagonal matrix with an explicit positive Green kernel. The
resulting formula reveals a nonlocal finite-particle effect: an initial
velocity gradient across one edge contributes to the accumulated expansion
of every path edge. We further prove that a spectral-geometric condition
allows an open set of initially nonordered velocities to enter the expansive
regime safely. A uniform self-weight extension covers both self-excluding and
self-including local averages. These extensions test the robustness of the
finite-event mechanism while keeping the exact assumptions visible.

The model may also be viewed as a state-dependent interaction network, which
connects the analysis with consensus under prescribed switching
\cite{JadbabaieLinMorse2003,Moreau2005,OlfatiSaberFaxMurray2007} and
bounded-confidence dynamics
\cite{HegselmannKrause2002,BlondelHendrickxTsitsiklis2009}. Its second-order
transport structure, hard cut-off, and local normalization distinguish it
from those systems and from cluster prediction for smooth Cucker--Smale
interactions \cite{HaKimParkZhang2019}. The present results concern finite
particle systems; no thermodynamic or mean-field limit is asserted.

Section~\ref{sec:model} defines the model and states the main results.
Section~\ref{sec:order} proves exact terminal-cluster prediction in the
expansive regime. Section~\ref{sec:path} derives the path criterion and sharp
threshold. Section~\ref{sec:spectral} treats robustness and extensions.
Section~\ref{sec:numerics} gives numerical checks, and
Section~\ref{sec:discussion} discusses the finite-size interpretation and
open problems.

\section{Model and definitions}
\label{sec:model}

\subsection{Finite-range normalized local alignment}

Consider $N\ge2$ agents moving on the real line. Agent $i$ has position
$x_i(t)\in\R$ and velocity $v_i(t)\in\R$. Fix an interaction radius $r>0$
and an alignment rate $\kappa>0$. The active neighbor set of agent $i$ is
\begin{equation}
    \Nset_i(t)
    :=
    \left\{
        j\in\{1,\ldots,N\}\setminus\{i\}:
        |x_j(t)-x_i(t)|<r
    \right\},
    \label{eq:neighbors}
\end{equation}
and $n_i(t):=|\Nset_i(t)|$.

The dynamics is
\begin{equation}
    \dot x_i=v_i,
    \label{eq:x-dynamics}
\end{equation}
and, whenever $n_i(t)>0$,
\begin{equation}
    \dot v_i
    =
    \kappa
    \left(
        \frac{1}{n_i(t)}
        \sum_{j\in\Nset_i(t)}v_j
        -
        v_i
    \right).
    \label{eq:v-dynamics}
\end{equation}
If $n_i(t)=0$, we set
\begin{equation}
    \dot v_i=0.
    \label{eq:isolated}
\end{equation}

Equation~\eqref{eq:v-dynamics} is the self-excluding normalized local-average
rule used in the earlier model \cite{ChenZhangLim2013}. It is closely related
to the Motsch--Tadmor normalization \cite{MotschTadmor2011}, but the precise
normalization convention is fixed here by \eqref{eq:neighbors}--\eqref{eq:v-dynamics}
and will be used throughout the paper.

\begin{remark}[Strict cut-off convention]
The interaction condition is strict: a pair is active if and only if its
distance is strictly smaller than $r$. Thus a pair satisfying
$|x_i-x_j|=r$ is not an active edge. This convention is important in the
critical case where an active separation may converge to $r$ only as
$t\to\infty$.
\end{remark}

\subsection{Communication graph}

At time $t$, define the undirected communication graph
\[
    G(t)=(V,E(t)),
    \qquad
    V=\{1,\ldots,N\},
\]
with
\begin{equation}
    (i,j)\in E(t)
    \quad\Longleftrightarrow\quad
    |x_i(t)-x_j(t)|<r.
    \label{eq:graph}
\end{equation}

Because the graph depends on the evolving positions, the full system is a
state-dependent switching system. Between topology-change times the graph is
fixed and the velocity equation is linear. At a topology-change time the
positions and velocities are kept continuous and the active edge set is
updated according to \eqref{eq:graph}. If several pairs reach the interaction
boundary simultaneously, all corresponding edge changes are applied at the
same event time.

\begin{definition}[Edge loss]
An active edge $(i,j)$ is lost at a finite time $T$ if
\[
    |x_i(t)-x_j(t)|<r
    \quad\text{for }t<T\text{ sufficiently close to }T,
\]
and
\[
    |x_i(T)-x_j(T)|=r.
\]
Under the strict cut-off convention, $(i,j)\notin E(T)$.
\end{definition}

\begin{definition}[Fragmentation]
Suppose $G(0)$ is connected. The swarm is said to \emph{fragment in finite
time} if there exists a finite event time $T$ such that $G(T)$ is
disconnected.
\end{definition}

Edge loss and fragmentation are distinct notions: deleting a non-bridge edge
may change the communication topology without increasing the number of
connected components.

\begin{definition}[Terminal graph and asymptotic clusters]
A graph $G_\infty$ is called the \emph{terminal communication graph} if there
exists $T_\infty<\infty$ such that
\[
    G(t)=G_\infty
    \qquad\text{for all }t\ge T_\infty.
\]
The connected components of $G_\infty$ are called the \emph{terminal
clusters}. Their number is denoted by
\[
    K_\infty:=|\Comp(G_\infty)|.
\]
\end{definition}

For a nontrivial terminal component $C$, the fixed-graph analysis below will
show that all agents in $C$ converge to a common velocity. A singleton
component keeps the velocity it has at the time it becomes isolated.

\subsection{The expansive cone}

The principal deterministic theory of this paper concerns ordered positions
and velocities.

\begin{definition}[Expansive configuration]
A state $(x,v)$ belongs to the \emph{expansive cone} if
\begin{equation}
    x_1<x_2<\cdots<x_N
    \label{eq:ordered-x}
\end{equation}
and
\begin{equation}
    v_1\le v_2\le\cdots\le v_N.
    \label{eq:ordered-v}
\end{equation}
\end{definition}

For $i=1,\ldots,N-1$, define the adjacent spatial and velocity differences
\begin{equation}
    d_i:=x_{i+1}-x_i,
    \qquad
    q_i:=v_{i+1}-v_i.
    \label{eq:dq}
\end{equation}
Thus a state lies in the expansive cone precisely when $d_i>0$ and
$q_i\ge0$ for all $i$.

The terminology reflects the fact that, once velocity ordering is preserved,
\[
    \dot d_i=q_i\ge0.
\]
A central result of Section~\ref{sec:order} is that the cone
\eqref{eq:ordered-x}--\eqref{eq:ordered-v} is forward invariant. This implies
that all pairwise separations are nondecreasing and, consequently, the
communication graph can evolve only by irreversible edge deletion.

\subsection{Fixed-graph notation}

Let $G=(V,E)$ be an undirected communication graph. For a non-isolated
vertex $i$, let $\deg_G(i)$ denote its graph degree and define
\[
    (P_G)_{ij}
    =
    \begin{cases}
        1/\deg_G(i), & (i,j)\in E,\\
        0, & \text{otherwise}.
    \end{cases}
\]
For an isolated vertex we set $(P_G)_{ii}=1$ and all other entries in that
row equal to zero. Then $P_G$ is row stochastic:
\[
    P_G\one=\one.
\]
We define the random-walk Laplacian
\begin{equation}
    L_G:=I-P_G.
    \label{eq:LG}
\end{equation}
On every interval on which $G(t)\equiv G$,
\begin{equation}
    \dot v=-\kappa L_Gv.
    \label{eq:fixed-graph-v}
\end{equation}

For a matrix with semisimple zero eigenvalue, $L_G^\#$ will denote its group
inverse. Section~\ref{sec:fixed} will show that the group inverse is well defined here and
that, for an active edge $(i,j)$ with $i<j$,
\begin{equation}
    S_{ij}^{G}
    :=
    x_j-x_i
    +
    \frac{1}{\kappa}
    (e_j-e_i)^TL_G^\#v
    \label{eq:terminal-separation}
\end{equation}
is the terminal relative separation predicted by the current fixed graph.

\subsection{Main results and logical structure}

The paper has two principal finite-size conclusions. First, every initial
state in the expansive cone has an exactly predictable terminal clustering.
Velocity order is preserved, all separations are nondecreasing, and the graph
undergoes only finitely many edge deletions
(Theorem~\ref{thm:order-preservation}). At a current state $(x,v,G)$, the
graph is terminal precisely when
\begin{equation}
 S_{ij}^G\le r\qquad\text{for every }(i,j)\in E(G).
 \label{eq:main-terminal-test}
\end{equation}
If this fails, the smallest frozen hitting time is the next actual event
(Theorem~\ref{thm:terminal-graph-next-event}). Repeating this test terminates
after at most $|E(0)|$ events and returns the actual terminal graph and
cluster velocities (Theorem~\ref{thm:finite-cluster-prediction}).

Second, for an initial path with nondecreasing velocities, write
$d_i^0=x_{i+1}^0-x_i^0$, $q_i^0=v_{i+1}^0-v_i^0$, and $m=N-1$. Let $C_m$
be the tridiagonal matrix with off-diagonal entries $-1$, endpoint diagonal
entries $3$, and interior diagonal entries $2$; for $m=1$, set $C_1=[4]$.
Then
\begin{equation}
 d^*=d^0+\frac{2}{\kappa}C_m^{-1}q^0
 \label{eq:main-path-limit}
\end{equation}
is the frozen terminal-gap vector. Finite-time fragmentation occurs if and
only if $\max_i d_i^*>r$ (Theorem~\ref{thm:path-fragmentation}). Equivalently,
the sharp threshold is
\begin{equation}
 \kappa_c=\max_i\frac{2(C_m^{-1}q^0)_i}{r-d_i^0},
 \label{eq:main-kappa-threshold}
\end{equation}
with fragmentation for $0<\kappa<\kappa_c$ and no finite-time
fragmentation for $\kappa\ge\kappa_c$
(Theorem~\ref{thm:kappa-critical}). Equality is a genuinely marginal case:
at least one gap approaches $r$ only as $t\to\infty$.

The proof chain for the first conclusion is
\[
 \text{velocity order}
 \Longrightarrow \text{monotone separations}
 \Longrightarrow \text{irreversible deletion}
 \Longrightarrow \text{finite event recursion}.
\]
The fixed-graph group inverse enters only at the last implication, where it
computes the remaining relative displacement. Section~\ref{sec:order}
follows this chain without separating standard fixed-graph material from its
role in the switching argument. Section~\ref{sec:path} then specializes the
recursion to paths. Section~\ref{sec:spectral} records exactly how far the
hypotheses are extended and where the method ceases to apply.

The theory is discrete and does not establish a mean-field limit. Exact
cluster prediction is proved for expansive states and for a controlled class
that enters the expansive cone before leaving its initial path cell. For
arbitrary non-monotone data, crossings and edge creation destroy the monotone
event structure; that switching problem remains open.

\section{Exact prediction in the expansive regime}
\label{sec:order}

The proof follows the dependency chain stated in Section~\ref{sec:model}.
We first establish the invariant velocity order and the resulting monotone
graph evolution. We then compute the accumulated motion on one fixed graph
and use it to select the next actual event. Iteration gives the terminal
clusters and their limiting dynamics.

\subsection{Velocity convex-hull contraction}

We begin with a basic estimate that does not require ordered initial data.

\begin{proposition}[Velocity convex-hull contraction]
\label{prop:velocity-hull}
Let $(x(t),v(t))$ be a piecewise-classical solution of
\eqref{eq:x-dynamics}--\eqref{eq:isolated}.  Define
\[
    V_{\max}(t):=\max_{1\le i\le N}v_i(t),
    \qquad
    V_{\min}(t):=\min_{1\le i\le N}v_i(t).
\]
Then $V_{\max}$ is nonincreasing and $V_{\min}$ is nondecreasing.  In
particular,
\begin{equation}
    D_v(t):=V_{\max}(t)-V_{\min}(t)
    \le D_v(0)
    \qquad\text{for all }t\ge0.
    \label{eq:velocity-diameter}
\end{equation}
\end{proposition}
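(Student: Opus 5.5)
The plan is a maximum-principle argument for the velocity equation, carried out separately on each interval where the communication graph is fixed, and then glued across event times using continuity of $v$. The key observation is that the right-hand side of \eqref{eq:v-dynamics} at any agent attaining the maximum is nonpositive. Indeed, if $v_i(t)=V_{\max}(t)$, then every average $\frac{1}{n_i}\sum_{j\in\Nset_i}v_j$ is at most $V_{\max}(t)$. Hence $\dot v_i\le 0$ whenever $n_i>0$, and $\dot v_i=0$ by \eqref{eq:isolated} otherwise. The argument for $V_{\min}$ is symmetric, or follows by applying the same reasoning to $-v$, since the dynamics are odd in $v$.

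\textbf{Step 1: monotonicity on a fixed-graph interval.} Fix an open interval $(t_k,t_{k+1})$ on which $G(t)\equiv G$. There $v$ is $C^1$ and solves the linear system \eqref{eq:fixed-graph-v}. I would avoid Dini-derivative technicalities by working with the linear flow directly. The solution is $v(t)=e^{-\kappa(t-s)L_G}v(s)$, and $e^{-\kappa\tau L_G}=e^{-\kappa\tau}e^{\kappa\tau P_G}$. This matrix has nonnegative entries, because $P_G\ge0$ entrywise, and it is row stochastic, because $L_G\one=0$. Consequently each $v_i(t)$ is a convex combination of the entries of $v(s)$ for $s\le t$ in the interval, which gives
\[
V_{\min}(s)\le v_i(t)\le V_{\max}(s).
\]
This yields the monotonicity of $V_{\max}$ and $V_{\min}$ on the closure of each interval, by continuity of $v$ up to the endpoints. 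As a fallback, one could use the direct argument: the upper Dini derivative of a maximum of finitely many $C^1$ functions is bounded by $\max\{\dot v_i : v_i=V_{\max}\}\le 0$.

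\textbf{Step 2: gluing across events.} By the solution concept, positions and velocities are continuous at event times, and only the edge set jumps. Hence $V_{\max}$ and $V_{\min}$ are continuous functions of $t$. Piecewise monotonicity on consecutive closed intervals $[t_k,t_{k+1}]$ then gives global monotonicity on $[0,\infty)$. The one place that needs care, and the main obstacle, is whether the event times could accumulate at a finite time. A piecewise-classical solution should by definition have locally finitely many switches, and I would state this explicitly as part of the solution concept. Even so, the conclusion survives accumulation. A continuous function that is nonincreasing on each interval of a partition whose breakpoints accumulate only at isolated points is still nonincreasing: take the supremum of times up to which monotonicity holds and pass to the limit using continuity.

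Finally, \eqref{eq:velocity-diameter} follows immediately by subtracting the two monotone bounds:
\[
D_v(t)=V_{\max}(t)-V_{\min}(t)\le V_{\max}(0)-V_{\min}(0)=D_v(0).
\]
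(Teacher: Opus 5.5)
Your proof is correct. Its overall structure matches the paper's: monotonicity on each fixed-graph interval, then gluing across events using continuity of $v$. The difference is in the fixed-graph step.

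\textbf{The paper's route.} It argues pointwise. At every index attaining $V_{\max}(t)$ one has $\dot v_i(t)\le 0$. The upper right Dini derivative of a maximum of finitely many differentiable functions is the largest derivative among the maximizing indices, so it is nonpositive. Your fallback is exactly this argument.

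\textbf{Your route.} Your main argument uses the explicit propagator $e^{-\kappa\tau L_G}=e^{-\kappa\tau}e^{\kappa\tau P_G}$. This matrix is entrywise nonnegative and row stochastic, so each $v_i(t)$ is a convex combination of the entries of $v(s)$.

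\textbf{What each buys.} Your route avoids the Dini-derivative lemma entirely. It also yields a slightly stronger, Markov-chain-type representation of the velocities. It fits naturally with the positive-semigroup reasoning the paper uses later, in Corollary~\ref{cor:fixed-positive} and Proposition~\ref{prop:fixed-consensus}. Its cost is that it relies on the constant-coefficient linear structure within each interval. The paper's maximum-principle argument uses only the sign of $\dot v_i$ at maximizers. It would therefore survive, for example, averaging weights that vary in time between events, where your version would need a time-ordered product of stochastic propagators.

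\textbf{Accumulation of event times.} Your extra discussion of this point is sound but goes beyond what the paper does. The paper simply glues by continuity, implicitly taking switching times to be locally finite as part of the piecewise-classical solution concept.
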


\begin{proof}
Fix a time interval on which the communication graph is constant.  If
$v_i(t)=V_{\max}(t)$ and $i$ is non-isolated, then every neighbor velocity is
at most $V_{\max}(t)$, hence
\[
    \dot v_i
    =
    \kappa
    \left(
        \frac{1}{n_i}\sum_{j\in\Nset_i}v_j-v_i
    \right)
    \le0.
\]
If $i$ is isolated, then $\dot v_i=0$.  This shows $\dot v_i(t)\le0$ at every
index $i$ attaining the maximum at time $t$, whether or not that maximizer is
unique.  Since $V_{\max}$ is the pointwise maximum of finitely many
differentiable functions, its upper right Dini derivative equals the largest
value of $\dot v_i(t)$ over all currently maximizing indices $i$; as every
such value is nonpositive, so is the Dini derivative of $V_{\max}$.  The
corresponding argument at a minimizing index, applied to every minimizer
simultaneously, gives a nonnegative lower right Dini derivative for
$V_{\min}$.  Since the velocities remain continuous at topology-change times,
the monotonicity extends across the switching events.
\end{proof}

\begin{remark}
Proposition~\ref{prop:velocity-hull} replaces the kinetic-energy
monotonicity that is unavailable for the present normalized dynamics.
Ordinary momentum is generally not conserved, but the velocity convex hull
is nevertheless forward invariant.
\end{remark}

\subsection{A one-dimensional sliding-neighborhood lemma}

The proof of order preservation relies on a geometric property specific to
one dimension.  Throughout this subsection assume
\[
    x_1<x_2<\cdots<x_N.
\]
For each $i$, introduce the radius-$r$ index window including the center
itself,
\begin{equation}
    I_i
    :=
    \{j:|x_j-x_i|<r\}.
    \label{eq:inclusive-window}
\end{equation}
Since the positions are ordered, there exist indices $\ell_i\le i\le R_i$
such that
\[
    I_i=\{\ell_i,\ell_i+1,\ldots,R_i\}.
\]

\begin{lemma}[Sliding of one-dimensional metric neighborhoods]
\label{lem:sliding-window}
For $i=1,\ldots,N-1$,
\begin{equation}
    \ell_{i+1}\ge\ell_i,
    \qquad
    R_{i+1}\ge R_i.
    \label{eq:window-monotonicity}
\end{equation}
\end{lemma}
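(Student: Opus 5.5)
The plan is to describe the window endpoints explicitly through the ordering and then compare them directly. Because $x_1<\cdots<x_N$, the distance $|x_j-x_i|$ is nonincreasing in $j$ for $j\le i$ and nondecreasing in $j$ for $j\ge i$. This gives
\[
\ell_i=\min\{j\le i:\ x_i-x_j<r\},\qquad R_i=\max\{j\ge i:\ x_j-x_i<r\}.
\]
In particular, $I_i$ is indeed the contiguous block $\{\ell_i,\ldots,R_i\}$, and it contains $i$ because $0<r$.

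For the right endpoint I split into two cases. If $R_i\le i+1$, then $R_{i+1}\ge i+1\ge R_i$ and there is nothing to prove. Otherwise $R_i\ge i+2>i+1$, and from $x_{i+1}>x_i$ we get
\[
x_{R_i}-x_{i+1}<x_{R_i}-x_i<r.
\]
Hence $R_i$ satisfies the defining condition for membership in $I_{i+1}$ on the right side of $i+1$, so $R_{i+1}\ge R_i$.

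The left endpoint is the mirror argument. If $\ell_{i+1}\ge i$, then $\ell_{i+1}\ge i\ge \ell_i$. Otherwise $\ell_{i+1}<i$, and
\[
x_i-x_{\ell_{i+1}}<x_{i+1}-x_{\ell_{i+1}}<r.
\]
So $\ell_{i+1}\in I_i$ with $\ell_{i+1}\le i$, and minimality of $\ell_i$ gives $\ell_i\le \ell_{i+1}$.

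There is no real obstacle here. The only care needed is in the case split: the points $i$ and $i+1$ themselves must be handled separately from the strict-inequality comparison, and one must check that the strict cut-off convention is preserved. Both are immediate, because the comparisons inherit strict inequality from $x_{i+1}>x_i$.
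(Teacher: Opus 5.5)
Your proof is correct and rests on the same idea as the paper's: because $x_{i+1}>x_i$, the open window $(x_{i+1}-r,x_{i+1}+r)$ is a rightward shift of $(x_i-r,x_i+r)$, so the smallest and largest indices it contains cannot decrease. You make the membership comparison explicit, with the case split separating out the indices $i$ and $i+1$; the paper leaves this step implicit.
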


\begin{proof}
Since $x_{i+1}>x_i$,
\[
    x_{i+1}-r>x_i-r,
    \qquad
    x_{i+1}+r>x_i+r.
\]
Moving the center from $x_i$ to $x_{i+1}$ therefore shifts both endpoints
of the open interaction interval to the right.  Because the positions are
strictly ordered, the smallest index contained in the interval cannot
decrease and the largest contained index cannot decrease.
\end{proof}

The next lemma is the key comparison at a boundary of the ordered-velocity
cone.

\begin{lemma}[Boundary acceleration inequality]
\label{lem:boundary-acceleration}
Assume
\[
    x_1<\cdots<x_N,
    \qquad
    v_1\le\cdots\le v_N,
\]
and suppose that for some $i\in\{1,\ldots,N-1\}$,
\[
    v_i=v_{i+1}.
\]
Then
\begin{equation}
    \dot v_{i+1}-\dot v_i\ge0.
    \label{eq:boundary-acceleration}
\end{equation}
The conclusion holds whether or not $i$ and $i+1$ are neighbors.
\end{lemma}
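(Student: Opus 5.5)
We compare the two neighborhood averages directly through the index windows of Lemma~\ref{lem:sliding-window}. For a non-isolated agent $k$, the self-excluding neighbor set is $\Nset_k=I_k\setminus\{k\}$, an index window with the center removed. Write
\[
A_k:=\frac{1}{n_k}\sum_{j\in\Nset_k}v_j
\qquad\text{if }n_k>0,
\]
so that $\dot v_k=\kappa(A_k-v_k)$. Since $v_i=v_{i+1}=:w$, the target inequality \eqref{eq:boundary-acceleration} reduces to showing that the quantity
\[
\Delta:=(\dot v_{i+1}-\dot v_i)/\kappa
\]
is nonnegative, where $\Delta=A_{i+1}-A_i$ when both agents are non-isolated. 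If one or both agents are isolated, the corresponding term $A_k-v_k$ is replaced by $0$.

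\textbf{Isolated cases.} These are handled first. If $i$ is isolated and $i+1$ is not, then $A_{i+1}\ge w$ must be shown. By Lemma~\ref{lem:sliding-window}, $\ell_{i+1}\ge\ell_i=i$, so every neighbor of $i+1$ has index at least $i$. Since the only such index below $i+1$ is $i$ itself, all neighbors $j$ satisfy $j\ge i$, hence $v_j\ge v_i=w$ by the velocity ordering. The case where $i+1$ is isolated and $i$ is not is symmetric: $R_i\le R_{i+1}=i+1$, so every neighbor $j$ of $i$ has $j\le i+1$ and $v_j\le w$, giving $A_i\le w$. If both are isolated, $\Delta=0$.

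\textbf{Main case.} When both agents are non-isolated, the plan is to write the two neighbor multisets of velocities in terms of the windows. Set
\[
\Nset_i=\{\ell_i,\dots,R_i\}\setminus\{i\},
\qquad
\Nset_{i+1}=\{\ell_{i+1},\dots,R_{i+1}\}\setminus\{i+1\}.
\]
The key observation is that removing $i$ or $i+1$ removes the same velocity value $w$. So the multiset of velocities over $\Nset_i$ equals the multiset over $I_i$ with one copy of $w$ deleted, and likewise for $\Nset_{i+1}$. Here $w$ is the velocity of whichever of $i$ and $i+1$ is the center. When $i$ and $i+1$ are neighbors, both $i$ and $i+1$ lie in both windows, so this bookkeeping is consistent. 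When they are not neighbors, $I_i=\{\ell_i,\dots,i\}$ and $I_{i+1}=\{i+1,\dots,R_{i+1}\}$, so $\Nset_i$ uses only indices $<i$ and $\Nset_{i+1}$ only indices $>i+1$. The inequality $A_i\le w\le A_{i+1}$ then follows immediately from the ordering.

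\textbf{Comparing the averages in the neighbor case.} The comparison of two averages over the sorted sequence $v_1\le\dots\le v_N$ is the main obstacle. The plan is to use a coupling or majorization argument. Let $a$ be the sorted list of velocities over $\Nset_i$ and $b$ the sorted list over $\Nset_{i+1}$. The window $\Nset_{i+1}$ is obtained from $\Nset_i$ by three changes:
\begin{itemize}
\item deleting the low indices $\ell_i,\dots,\ell_{i+1}-1$, all of which are below $i$;
\item adding the high indices $R_i+1,\dots,R_{i+1}$, all of which are above $i+1$;
\item swapping $i+1$ out for $i$, which have equal velocities.
\end{itemize}
Removing elements with values $\le w$ cannot decrease an average whose elements include values $\ge w$. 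More carefully, I will perform the changes sequentially and check that the average does not decrease at each step.

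Deleting a smallest element $v_j$ from a list never lowers the mean, provided the list remains nonempty. Adding an element $v_k$ that is at least every current element never lowers the mean. Adding the high indices first keeps every intermediate list nonempty. Each added $v_k$ with $k>R_i$ dominates every element present, because all current indices are $\le k$. Each deleted index $j<\ell_{i+1}\le i$ is the minimal index present, hence has a minimal value. Concluding that the mean is monotone along the chain gives $A_{i+1}\ge A_i$, which proves \eqref{eq:boundary-acceleration}. The subtle point to verify is the list-nonemptiness and the ordering of operations, which Lemma~\ref{lem:sliding-window} guarantees.
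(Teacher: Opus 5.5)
Your proof is correct and follows essentially the same route as the paper. In the non-neighbor case the ordering alone gives $A_i\le w\le A_{i+1}$. In the neighbor case you identify the equal entries $v_i=v_{i+1}$ and use the sliding-window lemma to reduce the passage from $M_i$ to $M_{i+1}$ to removing minimal entries and adding maximal entries, neither of which can lower the mean. Your separate treatment of isolated agents is valid but redundant: an isolated $i$ or $i+1$ already falls into the non-neighbor case, where the paper simply notes $\dot v_i\le 0$ (with equality when $i$ is isolated) and $\dot v_{i+1}\ge 0$.
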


\begin{proof}
Write $v_i=v_{i+1}=c$.

First suppose that $x_{i+1}-x_i\ge r$.  Since $i$ and $i+1$ are adjacent in
the spatial ordering, every neighbor of $i$ has index smaller than $i$ and
therefore velocity at most $c$.  Hence $\dot v_i\le0$, with equality if $i$
is isolated.  Similarly, every neighbor of $i+1$ has index larger than
$i+1$ and velocity at least $c$, so $\dot v_{i+1}\ge0$.  Thus
\eqref{eq:boundary-acceleration} follows.

Now suppose that $x_{i+1}-x_i<r$.  Then $i$ and $i+1$ belong to both
inclusive windows $I_i$ and $I_{i+1}$.  The self-excluding neighbor-velocity
multisets may be written as
\[
    M_i
    =
    \{v_j:\ell_i\le j\le R_i,\ j\ne i\},
\]
and
\[
    M_{i+1}
    =
    \{v_j:\ell_{i+1}\le j\le R_{i+1},\ j\ne i+1\}.
\]
The multiset $M_i$ contains $v_{i+1}=c$ whereas $M_{i+1}$ contains
$v_i=c$.  We may therefore identify these two equal entries as a common
element.  By Lemma~\ref{lem:sliding-window}, passing from $M_i$ to
$M_{i+1}$ removes only entries on the far left and adds only entries on the
far right.  Because the velocities are nondecreasing with the index, each
removed entry is no larger than every entry that remains, whereas each added
entry is no smaller than every entry already present.

Removing a minimum element from a nonempty finite multiset cannot decrease
its arithmetic mean, and adding an element no smaller than the current
maximum cannot decrease the mean.  Applying these operations successively
gives
\[
    \operatorname{mean}(M_{i+1})
    \ge
    \operatorname{mean}(M_i).
\]
Since $v_i=v_{i+1}=c$,
\[
    \dot v_{i+1}-\dot v_i
    =
    \kappa
    \left[
        \operatorname{mean}(M_{i+1})
        -
        \operatorname{mean}(M_i)
    \right]
    \ge0.
\]
\end{proof}

\subsection{Positive dynamics for adjacent velocity differences}

The boundary inequality above has an equivalent positive-systems
interpretation that gives a convenient rigorous proof of invariance.

Fix an interval of time on which the communication graph is a constant graph
$G$ generated by an ordered spatial configuration.  Let $H_G$ be the matrix
such that
\begin{equation}
    \dot v=\kappa H_Gv.
    \label{eq:HG}
\end{equation}
For a non-isolated vertex,
\[
    (H_Gv)_i
    =
    \frac{1}{\deg_G(i)}
    \sum_{j\sim i}v_j-v_i,
\]
while an isolated vertex corresponds to a zero row.  In either case,
\begin{equation}
    H_G\one=0.
    \label{eq:HG-one}
\end{equation}

Let $Q\in\R^{(N-1)\times N}$ be the adjacent-difference matrix
\[
    Qv
    =
    (v_2-v_1,\ldots,v_N-v_{N-1})^T.
\]
Then $\ker Q=\operatorname{span}\{\one\}$.  Equation
\eqref{eq:HG-one} implies that $QH_Gv$ depends only on $Qv$.  Consequently
there is a unique linear map $M_G$ on $\R^{N-1}$ such that
\begin{equation}
    QH_G=M_GQ.
    \label{eq:intertwining}
\end{equation}
Thus, with
\[
    q:=Qv,
\]
the adjacent velocity differences satisfy
\begin{equation}
    \dot q=\kappa M_Gq.
    \label{eq:q-general-fixed}
\end{equation}

\begin{lemma}[Metzler structure]
\label{lem:metzler}
For every fixed communication graph $G$ generated by an ordered
one-dimensional configuration, the matrix $M_G$ in
\eqref{eq:intertwining} is Metzler; that is,
\[
    (M_G)_{ik}\ge0
    \qquad\text{whenever }i\ne k.
\]
\end{lemma}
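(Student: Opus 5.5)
The plan is to compute the entries of $M_G$ column by column, using a basis of $\R^{N-1}$ that lifts to monotone step vectors in $\R^N$. This reduces each off-diagonal entry to the mean comparison already proved in Lemma~\ref{lem:boundary-acceleration}. For $k=1,\ldots,N-1$ let $u^{(k)}\in\R^N$ be the indicator of the index set $\{k+1,\ldots,N\}$, so that $u^{(k)}_j=1$ for $j>k$ and $u^{(k)}_j=0$ for $j\le k$. Then $Qu^{(k)}=e_k$, the $k$th standard basis vector of $\R^{N-1}$. By the intertwining relation \eqref{eq:intertwining},
\[
    M_Ge_k=M_GQu^{(k)}=QH_Gu^{(k)},
\]
and therefore
\[
    (M_G)_{ik}=(H_Gu^{(k)})_{i+1}-(H_Gu^{(k)})_i .
\]
This step only needs the uniqueness and linearity of $M_G$ already established.

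Next fix $i\ne k$. The vector $u^{(k)}$ is nondecreasing in the index. Since $i\ne k$, the indices $i$ and $i+1$ lie on the same side of the jump, so $u^{(k)}_i=u^{(k)}_{i+1}$. The positions generating $G$ are strictly ordered. We are therefore exactly in the setting of Lemma~\ref{lem:boundary-acceleration}, with $u^{(k)}$ in place of the velocity vector (taking $\kappa=1$, or dividing by $\kappa$). That lemma gives $(H_Gu^{(k)})_{i+1}-(H_Gu^{(k)})_i\ge0$, which is the Metzler inequality $(M_G)_{ik}\ge0$.

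The one point needing care is that Lemma~\ref{lem:boundary-acceleration} is phrased in terms of the actual state $(x,v)$ and the time derivative $\dot v$. I will check that its proof uses only three facts: the ordering of positions (through the windows of Lemma~\ref{lem:sliding-window}), the monotonicity of the velocity vector in the index, and the formula $\dot v=\kappa H_Gv$ on a fixed graph. It uses nothing about $v$ being a trajectory. So it applies verbatim to any nondecreasing vector $w$ with $w_i=w_{i+1}$, including $w=u^{(k)}$.

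The isolated and separated cases also need checking. If $x_{i+1}-x_i\ge r$, the first case of that proof applies, and zero rows of $H_G$ for isolated vertices are harmless. If $x_{i+1}-x_i<r$, the sliding-window multiset argument applies. I expect this transfer to be the only real obstacle; once it is done, the rest of the argument is immediate.
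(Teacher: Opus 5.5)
Your proposal is correct and is essentially the paper's own proof: the paper also takes the step vector equal to $0$ on indices $j\le k$ and $1$ on $j\ge k+1$, notes $Qv=e_k$, and combines Lemma~\ref{lem:boundary-acceleration} at each $i\ne k$ with \eqref{eq:intertwining} to read off $(M_G)_{ik}=(QH_Gv)_i\ge0$. Your extra check also makes explicit a step the paper uses tacitly: the boundary lemma relies only on the ordered positions, the index-monotonicity of the vector, and the linear formula $\dot v=\kappa H_Gv$, so it applies to an arbitrary nondecreasing vector and not just to a trajectory.
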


\begin{proof}
Fix $k\in\{1,\ldots,N-1\}$ and choose a nondecreasing velocity vector
\[
    v_j=
    \begin{cases}
        0,&j\le k,\\
        1,&j\ge k+1.
    \end{cases}
\]
Then $Qv=e_k$.  For every $i\ne k$ we have
$v_i=v_{i+1}$, and therefore Lemma~\ref{lem:boundary-acceleration} gives
\[
    (QH_Gv)_i\ge0.
\]
Using \eqref{eq:intertwining},
\[
    (QH_Gv)_i
    =
    (M_Ge_k)_i
    =
    (M_G)_{ik}.
\]
Hence every off-diagonal entry of $M_G$ is nonnegative.
\end{proof}

\begin{corollary}[Positivity on a fixed graph]
\label{cor:fixed-positive}
On a time interval on which the graph is fixed,
\[
    q(t_0)\ge0
    \quad\Longrightarrow\quad
    q(t)\ge0
    \qquad\text{for all }t\ge t_0
\]
as long as that graph remains active.
\end{corollary}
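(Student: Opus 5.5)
The argument is the standard invariance result for linear systems with Metzler generators, applied to the fixed-graph equation \eqref{eq:q-general-fixed}. On the interval where $G$ is fixed, $q$ satisfies the linear constant-coefficient equation $\dot q=\kappa M_Gq$, so
\[
 q(t)=e^{\kappa(t-t_0)M_G}q(t_0).
\]
It therefore suffices to show that $e^{sM_G}$ is entrywise nonnegative for every $s\ge0$.

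By Lemma~\ref{lem:metzler}, $M_G$ is Metzler. Choose $\alpha\ge0$ with $\alpha\ge\max_i|(M_G)_{ii}|$, so that $A:=M_G+\alpha I$ is entrywise nonnegative. Since $A$ and $\alpha I$ commute,
\[
 e^{sM_G}=e^{-\alpha s}e^{sA}=e^{-\alpha s}\sum_{n\ge0}\frac{s^nA^n}{n!}.
\]
Every term in this series is a nonnegative matrix for $s\ge0$, so $e^{sM_G}\ge0$ entrywise. Applying it to $q(t_0)\ge0$ gives $q(t)\ge0$ for all $t\ge t_0$ for which the graph remains $G$.

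The only point needing care is that Lemma~\ref{lem:metzler} requires $G$ to be generated by an ordered configuration. Throughout the interval the positions stay strictly ordered: $\dot d_i=q_i$, and the ordering is only at risk at a topology change, which ends the interval anyway. So the hypothesis is the standing assumption of the fixed-graph interval. An alternative is a first-exit argument using Lemma~\ref{lem:boundary-acceleration}, but weak inequalities make the strict-exit step delicate, so the matrix-exponential route is preferable. I expect no substantive obstacle: the real content lies in Lemma~\ref{lem:metzler}, and the corollary is a direct consequence of it.
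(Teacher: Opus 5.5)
Your proof is correct and is essentially the paper's argument: shift $M_G$ by $\alpha I$ to obtain a nonnegative matrix, expand $e^{sA}$ as a power series with nonnegative terms, and factor out $e^{-\alpha s}$. Your side remark that the ordering is at risk only at a topology change is not accurate in general, since two neighbors can cross with no change in the edge set; it is also unnecessary, because $M_G$ is a fixed matrix and its Metzler property only needs $G$ to be generated by the ordered configuration at time $t_0$.
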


\begin{proof}
A Metzler matrix generates a positive semigroup; see, e.g.,
\cite{FarinaRinaldi2000,BermanPlemmons1994}.  Indeed, choose
$\alpha>0$ sufficiently large that $M_G+\alpha I$ is entrywise
nonnegative.  Then
\[
    e^{\kappa M_Gs}
    =
    e^{-\kappa\alpha s}
    e^{\kappa(M_G+\alpha I)s}
\]
is entrywise nonnegative for every $s\ge0$, since the power series for the
second exponential has only nonnegative terms.  Equation
\eqref{eq:q-general-fixed} now gives the claim.
\end{proof}

\subsection{Forward invariance of the expansive cone}

We now pass from a fixed communication graph to the full state-dependent
system.

\begin{theorem}[Order preservation and irreversible graph evolution]
\label{thm:order-preservation}
Suppose the initial data satisfy
\begin{equation}
    x_1(0)<x_2(0)<\cdots<x_N(0),
    \label{eq:thm-ordered-x0}
\end{equation}
and
\begin{equation}
    v_1(0)\le v_2(0)\le\cdots\le v_N(0).
    \label{eq:thm-ordered-v0}
\end{equation}
Then the event-driven solution of
\eqref{eq:x-dynamics}--\eqref{eq:isolated} exists for all $t\ge0$ and
satisfies
\begin{equation}
    v_1(t)\le v_2(t)\le\cdots\le v_N(t)
    \qquad\text{for all }t\ge0.
    \label{eq:velocity-order-all-time}
\end{equation}
Moreover, for every $i<j$,
\begin{equation}
    x_j(t)-x_i(t)
    \quad\text{is nondecreasing in }t.
    \label{eq:pair-separation-monotone}
\end{equation}
Consequently,
\begin{equation}
    E(t_2)\subseteq E(t_1)
    \qquad
    \text{whenever }t_2\ge t_1,
    \label{eq:edge-monotonicity}
\end{equation}
and every topology change consists only of the deletion of one or more
active edges.
\end{theorem}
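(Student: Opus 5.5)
The plan is an induction over the event times of the piecewise-linear switching system, using Corollary~\ref{cor:fixed-positive} on each interval of constant graph and the sliding-window structure to see that the spatial order, and hence the applicability of that corollary, survives every switch.

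\textbf{Step 1: local propagation.} Let $t_0$ be a time at which $x_1<\cdots<x_N$ and $q(t_0)\ge0$, with current graph $G=G(t_0)$. On any interval $[t_0,t_1)$ on which $G(t)\equiv G$, the velocity solves the linear system $\dot v=\kappa H_Gv$ and exists classically. Corollary~\ref{cor:fixed-positive} then gives $q(t)\ge0$ on that interval. Consequently
\[
 \frac{d}{dt}\bigl(x_j-x_i\bigr)=v_j-v_i=\sum_{k=i}^{j-1}q_k\ge0
 \qquad\text{for all } i<j .
\]
So every pair separation is nondecreasing on the interval, and the strict order $x_1<\cdots<x_N$ is preserved, because $d_i(t)\ge d_i(t_0)>0$.

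\textbf{Step 2: behaviour at events.} The next event time $t_1$ is the first time some separation reaches the threshold $r$. Since separations are nondecreasing, an inactive pair, with separation at least $r$, can never re-enter the interaction range. Therefore the only possible topology change is that an active pair reaches separation exactly $r$, and by the strict cut-off convention that edge is deleted; simultaneous hits are deleted together. Positions and velocities are continuous across $t_1$, so $q(t_1)\ge0$ and the order is strict. The new graph is again generated by an ordered configuration, so Lemma~\ref{lem:metzler} applies to it, and Step~1 restarts.

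\textbf{Step 3: global existence and conclusion.} Each event deletes at least one edge from the finite set $E(0)$, and no edge is ever created. Hence there are at most $|E(0)|$ events. Event times therefore cannot accumulate, and the solution is defined on all of $[0,\infty)$, with the graph constant after the last event. Concatenating the intervals yields \eqref{eq:velocity-order-all-time}, \eqref{eq:pair-separation-monotone}, and \eqref{eq:edge-monotonicity}.

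\textbf{Main obstacle.} The step I expect to need the most care is justifying that the event structure is well defined, and in particular excluding Zeno behaviour and hidden edge creation. Both are handled by closing the loop: Corollary~\ref{cor:fixed-positive} gives monotone separations on an interval, and monotone separations imply that only deletions can occur. The argument should therefore be phrased on the maximal interval $[0,T^*)$ of existence. On this interval $q\ge0$ holds on each subinterval, so at most $|E(0)|$ switches occur, and the final piece is a linear ODE that extends to all $t\ge0$, giving $T^*=\infty$.

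A minor point also needs checking: at a time when a gap equals $r$ only asymptotically, nothing switches, since no separation attains $r$ in finite time. The strict convention handles this.
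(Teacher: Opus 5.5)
Your proposal is correct and follows essentially the same route as the paper. You use Corollary~\ref{cor:fixed-positive} to get $q\ge0$ on each fixed-graph interval, deduce nondecreasing pairwise separations and hence that only deletions can occur, and iterate across events, where the state is continuous. The only difference is cosmetic: you put the $|E(0)|$ event bound (which the paper defers to Corollary~\ref{cor:finite-switching}) directly into the global-existence step, and rely on linearity of each piece instead of the velocity-hull bound of Proposition~\ref{prop:velocity-hull}, which is equally valid.
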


\begin{proof}
Let $q_i=v_{i+1}-v_i$.  On the initial fixed-graph interval,
Corollary~\ref{cor:fixed-positive} and
\eqref{eq:thm-ordered-v0} imply
\[
    q_i(t)\ge0
\]
up to the first topology-change time.  Therefore
\[
    \frac{d}{dt}(x_{i+1}-x_i)=q_i(t)\ge0,
\]
so every adjacent gap remains at least its strictly positive initial value.
In particular, particles cannot cross and the spatial ordering
\eqref{eq:thm-ordered-x0} is preserved.

For arbitrary $i<j$,
\[
    v_j-v_i
    =
    \sum_{k=i}^{j-1}q_k
    \ge0,
\]
hence
\[
    \frac{d}{dt}(x_j-x_i)=v_j-v_i\ge0.
\]
Thus every pairwise separation is nondecreasing before the first topology
change.  A pair that is not an edge cannot therefore enter the interaction
radius.  Hence the first topology change, if it occurs, can only delete
currently active edges.

At such an event time the positions and velocities are continuous.  In
particular $q_i\ge0$ still holds immediately after the event.  The new graph
is again generated by the same ordered one-dimensional configuration, so
Corollary~\ref{cor:fixed-positive} applies on the next fixed-graph interval.
Iterating proves \eqref{eq:velocity-order-all-time},
\eqref{eq:pair-separation-monotone}, and \eqref{eq:edge-monotonicity}
through every switching event.

Finally, Proposition~\ref{prop:velocity-hull} bounds all velocities uniformly
by the initial velocity convex hull, so positions cannot blow up in finite
time.  Since, as shown below, only finitely many topology changes can occur,
the piecewise-classical construction extends for all $t\ge0$.
\end{proof}

\begin{corollary}[No collision, no edge recovery, and finite switching]
\label{cor:finite-switching}
Under the hypotheses of Theorem~\ref{thm:order-preservation}:

\begin{enumerate}[label=(\roman*)]
\item the spatial ordering remains strict:
\[
    x_1(t)<x_2(t)<\cdots<x_N(t)
    \qquad\text{for all }t\ge0;
\]

\item once an edge is lost, it can never be recovered;

\item no new communication edge can be created;

\item the number of distinct topology-change times is at most
\[
    |E(0)|
    \le\frac{N(N-1)}2.
\]
In particular, the solution has no Zeno accumulation of switching times.
\end{enumerate}
\end{corollary}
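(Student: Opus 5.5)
The plan is to read all four items off Theorem~\ref{thm:order-preservation}, using the strict initial gaps and the edge-monotonicity \eqref{eq:edge-monotonicity}. The one nontrivial point is item (iv): the count of switching times must not rely on a global existence statement that itself used finiteness of switching.

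For (i), fix $i$. On each fixed-graph interval, $\dot d_i=q_i\ge0$ by \eqref{eq:velocity-order-all-time}, and $d_i$ is continuous across events. Hence $d_i(t)\ge d_i(0)>0$ for as long as the solution is defined, and the strict ordering persists.

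Items (ii) and (iii) follow directly from \eqref{eq:pair-separation-monotone}. Suppose a pair $(i,j)$ with $i<j$ has $x_j(t_1)-x_i(t_1)\ge r$. Then for $t_2\ge t_1$ we have $x_j(t_2)-x_i(t_2)\ge r$, so the pair is not an edge at $t_2$. Applied at the loss time of an edge, this gives (ii). Applied at any time to a non-edge, it gives (iii); this is just \eqref{eq:edge-monotonicity}.

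For (iv), consider the event times $0<T_1<T_2<\cdots$ of the event-driven construction. At each $T_k$ the edge set changes, and by \eqref{eq:edge-monotonicity} the change is a removal. So $E(T_k)\subsetneq E(T_{k-1})$, and $|E(T_k)|\le|E(0)|-k$. Since $|E(\cdot)|\ge0$, there are at most $|E(0)|\le N(N-1)/2$ event times.

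To avoid circularity with the closing sentence of the proof of Theorem~\ref{thm:order-preservation}, I will argue inductively on the maximal interval of existence. Suppose the construction has produced $k$ events. On the next interval the graph is fixed and the system is linear, so the solution exists until either another boundary hit occurs or forever. Proposition~\ref{prop:velocity-hull} excludes finite-time blow-up. Therefore the construction either terminates with a fixed graph for all later times, or produces event $k+1$ with strictly fewer edges.

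This strict decrease bounds the number of iterations, so there is no Zeno accumulation and the solution is global. This also settles the forward reference ``as shown below'' in the proof of Theorem~\ref{thm:order-preservation}.

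The main subtlety is this bookkeeping. Simultaneous hits count as one event time. An event time must involve at least one actual deletion: a pair reaching distance exactly $r$ from below is removed under the strict cut-off convention. A pair sitting at distance $\ge r$ is never re-added, which is what item (iii) guarantees.
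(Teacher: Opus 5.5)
Your proposal is correct and follows the paper's proof: (i) from nondecreasing, initially positive adjacent gaps; (ii)--(iii) from monotone pairwise separations together with the strict cut-off; and (iv) from the fact that every event deletes at least one edge that can never return, which gives at most $|E(0)|$ event times. Your explicit induction on the interval of existence is a sound tightening of the forward reference (``as shown below'') in the proof of Theorem~\ref{thm:order-preservation}, which the paper leaves implicit.
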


\begin{proof}
Part (i) follows because every adjacent gap is nondecreasing and initially
strictly positive.  Parts (ii) and (iii) are immediate from the
nondecreasing pairwise separations and the strict cut-off rule.  At every
genuine topology-change time at least one previously active edge is deleted,
and by part (ii) that edge can never return.  Since the initial graph has
only finitely many edges, there can be at most $|E(0)|$ such event times.
\end{proof}

\begin{remark}[Why one dimension matters]
Theorem~\ref{thm:order-preservation} uses the total spatial ordering of the
line twice: metric neighborhoods slide monotonically in index space, and
velocity ordering converts directly into monotonicity of every pairwise
separation.  Neither mechanism has a direct analogue for a generic
configuration in two or more spatial dimensions.
\end{remark}

\subsection{Accumulated motion on a fixed communication graph}
\label{sec:fixed}

Theorem~\ref{thm:order-preservation} shows that, in the expansive regime,
the communication graph changes only by edge deletion and does so only
finitely many times.  We now analyze one interval on which the graph is
fixed.  The fixed-graph dynamics is linear, and its long-time relative
motion can be described exactly by the group inverse of the random-walk
Laplacian.

Throughout this section, let $G=(V,E)$ be a fixed undirected communication
graph and write
\[
    L_G=I-P_G
\]
as in \eqref{eq:LG}.  The graph is allowed to be disconnected and to contain
isolated vertices.

\subsubsection{Spectral structure of the normalized graph Laplacian}

Let $C\subseteq V$ be a connected component with at least two vertices.
Write $A_C$ for its adjacency matrix and
\[
    D_C:=\diag(\deg_C(i):i\in C)
\]
for its degree matrix.  On this component,
\[
    P_C=D_C^{-1}A_C,
    \qquad
    L_C=I-D_C^{-1}A_C.
\]

\begin{lemma}[Similarity to a symmetric normalized Laplacian]
\label{lem:laplacian-similarity}
For every nontrivial connected component $C$,
\begin{equation}
    D_C^{1/2}L_CD_C^{-1/2}
    =
    I-D_C^{-1/2}A_CD_C^{-1/2}.
    \label{eq:symmetric-similarity}
\end{equation}
Consequently, $L_C$ is diagonalizable with real nonnegative spectrum,
and $0$ is a simple eigenvalue.
\end{lemma}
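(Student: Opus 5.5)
The plan is to verify the identity \eqref{eq:symmetric-similarity} directly and then read off the spectral claims from standard properties of the symmetric normalized Laplacian. Since $C$ is a connected component with at least two vertices, every vertex of $C$ has $\deg_C(i)\ge1$. Hence $D_C$ is a positive diagonal matrix, and $D_C^{\pm1/2}$ is well defined. Substituting $L_C=I-D_C^{-1}A_C$ gives
\[
D_C^{1/2}L_CD_C^{-1/2}=I-D_C^{1/2}D_C^{-1}A_CD_C^{-1/2}=I-D_C^{-1/2}A_CD_C^{-1/2},
\]
because diagonal matrices commute. This establishes the displayed identity.

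Write $\mathcal L_C:=I-D_C^{-1/2}A_CD_C^{-1/2}$. Since $A_C$ is symmetric, $\mathcal L_C$ is real symmetric and therefore orthogonally diagonalizable with real spectrum. Because $L_C$ is similar to $\mathcal L_C$, it is diagonalizable with the same real eigenvalues.

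For nonnegativity, I will use the quadratic-form identity: for $y\in\R^{C}$, set $z=D_C^{-1/2}y$. Then
\[
y^T\mathcal L_Cy=z^T(D_C-A_C)z=\sum_{(i,j)\in E(C)}(z_i-z_j)^2\ge0,
\]
with each edge counted once. This follows from expanding $z^TD_Cz=\sum_i\deg_C(i)z_i^2$ and distributing $z_i^2$ over the incident edges. So $\mathcal L_C$ is positive semidefinite, and all eigenvalues of $L_C$ are nonnegative.

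For simplicity of the zero eigenvalue, since $\mathcal L_C$ is symmetric, algebraic and geometric multiplicities agree. It therefore suffices to show $\ker\mathcal L_C$ is one-dimensional. If $\mathcal L_Cy=0$, then $y^T\mathcal L_Cy=0$, so $z_i=z_j$ across every edge. Connectedness of $C$ then forces $z$ to be constant, that is, $y\in\operatorname{span}\{D_C^{1/2}\one\}$. Conversely, $P_C\one=\one$ gives $L_C\one=0$, hence $\mathcal L_C D_C^{1/2}\one=0$. The kernel is exactly one-dimensional, and by similarity $0$ is a simple eigenvalue of $L_C$, with right eigenvector $\one$.

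None of these steps is a serious obstacle. The only point requiring care is the use of connectedness and the positivity of all degrees; this is why the lemma is restricted to nontrivial components, since isolated vertices are handled separately by the convention $(P_G)_{ii}=1$.
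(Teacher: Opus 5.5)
Your proof is correct and follows the same route as the paper. You verify the identity by direct multiplication, then transfer symmetry, positive semidefiniteness, and the one-dimensional nullspace of the symmetric normalized Laplacian to $L_C$ by similarity; the only difference is that you prove the semidefiniteness and the kernel characterization directly via $y^T\mathcal L_Cy=\sum_{(i,j)\in E(C)}(z_i-z_j)^2$, whereas the paper cites these as standard facts from Chung.
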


\begin{proof}
Identity \eqref{eq:symmetric-similarity} follows by direct multiplication.
The matrix on the right-hand side is the symmetric normalized Laplacian of
the undirected connected graph $C$ \cite{Chung1997}.  It is symmetric
positive semidefinite, and its nullspace is one-dimensional.  Similarity
preserves eigenvalues and diagonalizability.
\end{proof}

For an isolated vertex $i$, our convention $P_{ii}=1$ gives the
one-dimensional block $L_{\{i\}}=0$.  Hence the zero eigenvalue of the full
matrix $L_G$ is semisimple, with multiplicity equal to the number of
connected components of $G$.

For every nontrivial connected component $C$, define
\begin{equation}
    \pi_i^C
    :=
    \frac{\deg_C(i)}
    {\sum_{j\in C}\deg_C(j)},
    \qquad i\in C.
    \label{eq:stationary-pi}
\end{equation}
Then $(\pi^C)^TP_C=(\pi^C)^T$ and $(\pi^C)^T\one_C=1$.  For a singleton
component $C=\{i\}$, set $\pi^C=1$.  Define the component projection
\begin{equation}
    \Pi_C:=\one_C(\pi^C)^T,
    \label{eq:component-projection}
\end{equation}
and let $\Pi_G$ be the block-diagonal matrix whose blocks are the
$\Pi_C$ over the connected components of $G$.

\begin{proposition}[Fixed-graph consensus projection]
\label{prop:fixed-consensus}
For a fixed graph $G$,
\begin{equation}
    e^{-\kappa L_Gt}\longrightarrow\Pi_G
    \qquad\text{as }t\to\infty.
    \label{eq:semigroup-to-projection}
\end{equation}
If $C$ is a nontrivial connected component, then all velocities in $C$
converge to the degree-weighted value
\begin{equation}
    V_C^\infty
    =
    (\pi^C)^Tv^0_C
    =
    \frac{\sum_{i\in C}\deg_C(i)v_i^0}
    {\sum_{i\in C}\deg_C(i)}.
    \label{eq:degree-weighted-consensus}
\end{equation}
For a singleton component, the velocity remains constant.
\end{proposition}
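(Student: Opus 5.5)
Since $L_G$ is block diagonal with respect to the connected components of $G$ (after a permutation of indices), I will treat one component at a time. The semigroup $e^{-\kappa L_Gt}$ is then block diagonal with blocks $e^{-\kappa L_Ct}$, and the target projection $\Pi_G$ is block diagonal with blocks $\Pi_C$, so it suffices to show $e^{-\kappa L_Ct}\to\Pi_C$ for every component $C$.

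\emph{Singleton components.} For an isolated vertex the block is $L_{\{i\}}=0$, so $e^{-\kappa L_{\{i\}}t}=1=\Pi_{\{i\}}$ for all $t$. Hence the velocity is constant, which is the last assertion.

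\emph{Nontrivial components.}
\begin{enumerate}[label=(\alph*)]
\item By Lemma~\ref{lem:laplacian-similarity}, write $L_C=D_C^{-1/2}\mathcal{L}_CD_C^{1/2}$ with $\mathcal{L}_C$ symmetric positive semidefinite and having a one-dimensional kernel.
\item Take the spectral decomposition $\mathcal{L}_C=\sum_k\lambda_k u_ku_k^T$, with $\lambda_0=0<\lambda_1\le\cdots$ and orthonormal eigenvectors.
\item This gives
\[
 e^{-\kappa L_Ct}=D_C^{-1/2}\Big(\sum_k e^{-\kappa\lambda_k t}u_ku_k^T\Big)D_C^{1/2}\longrightarrow D_C^{-1/2}u_0u_0^TD_C^{1/2},
\]
with exponential rate $\kappa\lambda_1$.
\item Identify the kernel vector. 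Since $L_C\one_C=0$, the vector $D_C^{1/2}\one_C$ lies in $\ker\mathcal{L}_C$, so $u_0=D_C^{1/2}\one_C/\|D_C^{1/2}\one_C\|$, where $\|D_C^{1/2}\one_C\|^2=\sum_{j\in C}\deg_C(j)$.
\item Substituting,
\[
D_C^{-1/2}u_0u_0^TD_C^{1/2}=\frac{\one_C(D_C\one_C)^T}{\sum_j\deg_C(j)}=\one_C(\pi^C)^T=\Pi_C.
\]
\end{enumerate}

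\emph{Velocity limits.} Applying the limit to $v(t)=e^{-\kappa L_Gt}v^0$, which is valid on a fixed graph by \eqref{eq:fixed-graph-v}, gives $v_C(t)\to\one_C(\pi^C)^Tv_C^0$. This is \eqref{eq:degree-weighted-consensus}.

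As a consistency check, $\pi^C$ is a left null vector of $L_C$: $(\pi^C)^TL_C=0$ follows from $(\pi^C)^TP_C=(\pi^C)^T$, using the symmetry of $A_C$. Consequently $(\pi^C)^Tv_C(t)$ is conserved along the flow.

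The argument is routine. The only point needing care is that $L_C$ is not symmetric, so the limit projection is oblique, and it must be identified through the similarity rather than by an orthogonal projection. Step (e) carries that identification.
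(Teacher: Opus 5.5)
Your proof is correct and follows the same route as the paper. You reduce to connected components, use the similarity of Lemma~\ref{lem:laplacian-similarity} to get a simple zero eigenvalue with the rest of the spectrum positive, and identify the limiting projection as $\one_C(\pi^C)^T$; the paper simply names $\pi^C$ as the left null vector, while you derive the same oblique projection explicitly by conjugating back the orthogonal projection onto $D_C^{1/2}\one_C$.
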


\begin{proof}
By Lemma~\ref{lem:laplacian-similarity}, every nonzero eigenvalue of a
nontrivial connected block $L_C$ is strictly positive and the zero
eigenspace is spanned by $\one_C$.  The corresponding left nullvector is
$\pi^C$.  Therefore the semigroup converges to
$\one_C(\pi^C)^T=\Pi_C$.  The singleton statement follows from
$L_{\{i\}}=0$.
\end{proof}

\begin{corollary}[Degree-weighted momentum on a fixed component]
\label{cor:weighted-momentum}
If $C$ is a nontrivial connected component and the graph remains fixed, then
\begin{equation}
    \sum_{i\in C}\deg_C(i)v_i(t)
    \label{eq:weighted-momentum}
\end{equation}
is constant in time.
\end{corollary}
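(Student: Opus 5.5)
We show that $(\pi^C)^T v_C(t)$ has zero time derivative on any interval where $G$ is fixed, and then rescale by the constant $\sum_{j\in C}\deg_C(j)$.

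On such an interval the velocities of the component $C$ evolve in closed form. Because $L_G$ is block diagonal over the connected components of $G$, the restriction of \eqref{eq:fixed-graph-v} to $C$ reads
\[
    \dot v_C=-\kappa L_Cv_C .
\]
Agents outside $C$ do not enter this equation, since $G$ has no edges leaving $C$.

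Next we use that $\pi^C$ is a left null vector of $L_C$. The identity $(\pi^C)^TP_C=(\pi^C)^T$ was stated just after \eqref{eq:stationary-pi}, and it can be checked directly: for $j\in C$,
\[
    \sum_{i\in C}\deg_C(i)\,(P_C)_{ij}
    =\sum_{i\sim j}\deg_C(i)\cdot\frac{1}{\deg_C(i)}
    =\deg_C(j).
\]
Hence $(\pi^C)^TL_C=(\pi^C)^T(I-P_C)=0$.

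Differentiating then gives
\[
    \frac{d}{dt}(\pi^C)^Tv_C=(\pi^C)^T\dot v_C=-\kappa(\pi^C)^TL_Cv_C=0 .
\]
So $(\pi^C)^Tv_C(t)$ is constant on the interval. Multiplying by the constant $\sum_{j\in C}\deg_C(j)$ shows that the quantity \eqref{eq:weighted-momentum} is conserved.

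There is no real obstacle. The only care needed is to note that the dynamics decouple into the component block $L_C$, which uses the block-diagonal structure of $L_G$ for a fixed graph. As a consistency check, letting $t\to\infty$ reproduces the limit \eqref{eq:degree-weighted-consensus} of Proposition~\ref{prop:fixed-consensus}.
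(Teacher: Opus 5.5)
Your proof is correct and follows the same route as the paper. You differentiate $(\pi^C)^Tv_C$, use $(\pi^C)^TL_C=0$, and rescale by the constant volume $\sum_{j\in C}\deg_C(j)$; your direct check of the left null vector and of the block decoupling only spells out details the paper takes as given.
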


\begin{proof}
Since $(\pi^C)^TL_C=0$,
\[
    \frac{d}{dt}(\pi^C)^Tv_C
    =
    -\kappa(\pi^C)^TL_Cv_C
    =0.
\]
Multiplying by the constant component volume
$\sum_{i\in C}\deg_C(i)$ gives \eqref{eq:weighted-momentum}.
\end{proof}

\begin{remark}
The conserved quantity in Corollary~\ref{cor:weighted-momentum} depends on
the current graph through the degrees.  It is therefore not conserved across
a topology-change event.  This is one manifestation of the nonsymmetry of
the normalized interaction rule.
\end{remark}

\subsubsection{The group inverse and exact fixed-graph trajectories}

Because the zero eigenvalue of $L_G$ is semisimple, the group inverse
$L_G^\#$ is well defined \cite{CampbellMeyer2009}.  Since
$-L_G=P_G-I$ is the generator of a continuous-time Markov chain on the
vertex set, $L_G^\#$
is precisely the group inverse that governs the fundamental quantities of
finite Markov chains in the sense of Meyer \cite{Meyer1975}; here it plays
the analogous role for accumulated relative displacements.  It is
characterized by
\begin{equation}
    L_GL_G^\#
    =
    L_G^\#L_G
    =
    I-\Pi_G,
    \qquad
    L_G^\#\Pi_G
    =
    \Pi_GL_G^\#
    =
    0.
    \label{eq:group-inverse-identities}
\end{equation}

\begin{lemma}[Integral representation of the group inverse]
\label{lem:group-inverse-integral}
For every fixed communication graph $G$,
\begin{equation}
    L_G^\#
    =
    \int_0^\infty
    \left(e^{-L_Gs}-\Pi_G\right)\,ds,
    \label{eq:group-inverse-integral}
\end{equation}
and, for every $t\ge0$,
\begin{equation}
    \int_0^t
    \left(e^{-\kappa L_Gs}-\Pi_G\right)\,ds
    =
    \frac{1}{\kappa}
    L_G^\#
    \left(I-e^{-\kappa L_Gt}\right).
    \label{eq:finite-group-inverse-integral}
\end{equation}
\end{lemma}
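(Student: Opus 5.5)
I will prove both identities by decomposing $\R^N=\ker L_G\oplus\operatorname{range}L_G$, which is available because the zero eigenvalue of $L_G$ is semisimple (Lemma~\ref{lem:laplacian-similarity} together with the convention $L_{\{i\}}=0$ for isolated vertices). Since $L_G$ is block diagonal over the connected components, and $\Pi_G$ and $e^{-L_Gs}$ share the same block structure, it suffices to argue on one component $C$.

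For a singleton component, $L_{\{i\}}=0$ and $\Pi_{\{i\}}=1$, so the integrand $e^{-0\cdot s}-1$ vanishes. Both sides of \eqref{eq:group-inverse-integral} are then zero, because the group inverse of the zero $1\times1$ matrix is $0$.

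For a nontrivial component $C$, Lemma~\ref{lem:laplacian-similarity} gives the diagonalization $L_C=\sum_k\lambda_kE_k$. Here $E_0=\Pi_C$ is the spectral projector for $\lambda_0=0$, the remaining eigenvalues satisfy $\lambda_k>0$, and the $E_k$ are complementary spectral projectors. It then follows that
\[
e^{-L_Cs}-\Pi_C=\sum_{k\ne0}e^{-\lambda_ks}E_k,
\]
which decays exponentially with rate $\lambda_{\min}>0$. The integral therefore converges absolutely and equals $\sum_{k\ne0}\lambda_k^{-1}E_k=:X$. I then check that $X$ satisfies the characterizing identities \eqref{eq:group-inverse-identities}:
\[
L_CX=XL_C=\sum_{k\ne0}E_k=I-\Pi_C,
\qquad
X\Pi_C=\Pi_CX=0.
\]
The group inverse is unique (\cite{CampbellMeyer2009}), so $X=L_C^\#$.

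There is a subtle point here. Uniqueness of the group inverse is usually stated through the three axioms $AXA=A$, $XAX=X$, $AX=XA$. I will verify these directly from the spectral form of $X$ rather than rely on \eqref{eq:group-inverse-identities} as a definition.

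For \eqref{eq:finite-group-inverse-integral}, I integrate the same spectral expansion term by term:
\[
\int_0^t\sum_{k\ne0}e^{-\kappa\lambda_ks}E_k\,ds=\sum_{k\ne0}\frac{1-e^{-\kappa\lambda_kt}}{\kappa\lambda_k}E_k.
\]
Using $E_jE_k=\delta_{jk}E_k$ and $I-e^{-\kappa L_Gt}=\sum_{k\ne0}(1-e^{-\kappa\lambda_kt})E_k$ (the $k=0$ term drops out), this equals $\kappa^{-1}L_G^\#(I-e^{-\kappa L_Gt})$.

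A basis-free alternative is to differentiate both sides in $t$. The right-hand side has derivative $L_G^\#L_Ge^{-\kappa L_Gt}=(I-\Pi_G)e^{-\kappa L_Gt}=e^{-\kappa L_Gt}-\Pi_G$, since $\Pi_Ge^{-\kappa L_Gt}=\Pi_G$. Both sides vanish at $t=0$, so they agree for all $t\ge0$. I will present this differentiation argument, because it avoids explicit eigenprojectors. Setting $\kappa=1$ and letting $t\to\infty$, with $e^{-L_Gt}\to\Pi_G$ by Proposition~\ref{prop:fixed-consensus} and $L_G^\#\Pi_G=0$, then recovers \eqref{eq:group-inverse-integral} once existence of $L_G^\#$ is granted.

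The only real obstacle is justifying existence and the identities \eqref{eq:group-inverse-identities} for $L_G^\#$. That is exactly what the spectral construction above supplies.
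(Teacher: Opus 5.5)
Your proposal is correct and follows essentially the same route as the paper. You diagonalize componentwise to identify $\int_0^\infty(e^{-L_Gs}-\Pi_G)\,ds$ with the operator acting by $1/\lambda$ on positive eigenmodes and by zero on the kernel, and you prove the finite-time identity by showing $F'(t)=L_G^\#L_Ge^{-\kappa L_Gt}=e^{-\kappa L_Gt}-\Pi_G$ with $F(0)=0$; your handling of singleton blocks and your check of the group-inverse axioms just spell out what the paper's phrase ``which is precisely the group inverse'' leaves implicit.
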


\begin{proof}
Diagonalize $L_G$ componentwise.  The operator $e^{-L_Gs}-\Pi_G$ vanishes
on the zero eigenspace and acts by $e^{-\lambda s}$ on every positive
eigenmode.  Integrating gives multiplication by $1/\lambda$ on each
positive eigenspace and zero on the nullspace, which is precisely the group
inverse.  This proves \eqref{eq:group-inverse-integral}.

For \eqref{eq:finite-group-inverse-integral}, define
\[
    F(t)
    :=
    \frac{1}{\kappa}
    L_G^\#
    \left(I-e^{-\kappa L_Gt}\right).
\]
Using \eqref{eq:group-inverse-identities},
\[
    F'(t)
    =
    L_G^\#L_Ge^{-\kappa L_Gt}
    =
    (I-\Pi_G)e^{-\kappa L_Gt}
    =
    e^{-\kappa L_Gt}-\Pi_G,
\]
and $F(0)=0$.
\end{proof}

\begin{proposition}[Exact fixed-graph trajectory]
\label{prop:fixed-trajectory}
Suppose the graph is held fixed at $G$ and the state at time $0$ is
$(x^0,v^0)$.  Then
\begin{equation}
    v(t)=e^{-\kappa L_Gt}v^0
    \label{eq:exact-fixed-v}
\end{equation}
and
\begin{equation}
    x(t)
    =
    x^0
    +
    t\,\Pi_Gv^0
    +
    \frac{1}{\kappa}
    L_G^\#
    \left(I-e^{-\kappa L_Gt}\right)v^0.
    \label{eq:exact-fixed-x}
\end{equation}
\end{proposition}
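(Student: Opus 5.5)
The proof is a direct integration of the linear fixed-graph system, and I would organize it in three short steps.

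First, with $G$ held fixed, equation \eqref{eq:fixed-graph-v} reads $\dot v=-\kappa L_Gv$. This is a constant-coefficient linear ODE. Its unique solution with $v(0)=v^0$ is $v(t)=e^{-\kappa L_Gt}v^0$, which is \eqref{eq:exact-fixed-v}. Isolated vertices are handled automatically: their rows of $L_G$ vanish by the convention $(P_G)_{ii}=1$, matching \eqref{eq:isolated}.

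Second, integrate \eqref{eq:x-dynamics} to get
\[
x(t)=x^0+\int_0^t e^{-\kappa L_Gs}v^0\,ds .
\]
I would split the integrand as
\[
e^{-\kappa L_Gs}=\Pi_G+\bigl(e^{-\kappa L_Gs}-\Pi_G\bigr).
\]
The first piece integrates to $t\,\Pi_Gv^0$. The second piece is evaluated by the finite-time identity \eqref{eq:finite-group-inverse-integral} of Lemma~\ref{lem:group-inverse-integral}, giving $\frac1\kappa L_G^\#\bigl(I-e^{-\kappa L_Gt}\bigr)v^0$. Adding the two pieces yields \eqref{eq:exact-fixed-x}.

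Third, as a consistency check, differentiate the right-hand side of \eqref{eq:exact-fixed-x}. The derivative is
\[
\Pi_Gv^0+L_G^\#L_Ge^{-\kappa L_Gt}v^0=\Pi_Gv^0+(I-\Pi_G)e^{-\kappa L_Gt}v^0
\]
by \eqref{eq:group-inverse-identities}. To finish, I need the commutation $\Pi_Ge^{-\kappa L_Gt}=\Pi_G$. This holds because $\Pi_GL_G=0$: each block is $\one_C(\pi^C)^T$, and $(\pi^C)^TL_C=0$ as in Corollary~\ref{cor:weighted-momentum}, while singleton blocks are trivial. With this, the derivative collapses to $e^{-\kappa L_Gt}v^0=v(t)$, and at $t=0$ the formula gives $x^0$.

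The only potential subtlety is that $L_G$ is nonsymmetric, so $\Pi_G$ and $L_G^\#$ must commute with the semigroup. This is already encoded in \eqref{eq:group-inverse-identities} and the componentwise diagonalization from Lemma~\ref{lem:laplacian-similarity}. I therefore expect no real obstacle; the main care is using $\Pi_Ge^{-\kappa L_Gt}=\Pi_G$ rather than assuming orthogonality.
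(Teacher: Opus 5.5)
Your proposal is correct and matches the paper's proof: solve $\dot v=-\kappa L_Gv$ by the matrix exponential, then integrate $\dot x=v$ using the split $e^{-\kappa L_Gs}=\Pi_G+\bigl(e^{-\kappa L_Gs}-\Pi_G\bigr)$ and the finite-time identity of Lemma~\ref{lem:group-inverse-integral}. Your third step is redundant, since it essentially repeats that lemma's proof, but it is valid, and it usefully makes explicit the identity $\Pi_Ge^{-\kappa L_Gt}=\Pi_G$ that the lemma's proof uses without comment.
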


\begin{proof}
Equation \eqref{eq:exact-fixed-v} is the solution of
\eqref{eq:fixed-graph-v}.  Integrating $\dot x=v$ and decomposing
\[
    e^{-\kappa L_Gs}
    =
    \Pi_G+
    \left(e^{-\kappa L_Gs}-\Pi_G\right)
\]
gives \eqref{eq:exact-fixed-x} by
Lemma~\ref{lem:group-inverse-integral}.
\end{proof}

The term $t\,\Pi_Gv^0$ in \eqref{eq:exact-fixed-x} is the ballistic motion
of the component consensus velocities.  It cancels from relative positions
inside a connected component.

\subsubsection{Frozen terminal separations}

Let $(i,j)\in E(G)$ with $i<j$, and define
\[
    b_{ij}:=e_j-e_i.
\]
Since an active edge joins vertices in the same connected component, the
corresponding rows of $\Pi_G$ are equal and therefore
\begin{equation}
    b_{ij}^T\Pi_G=0.
    \label{eq:bPi-zero}
\end{equation}
Define the signed separation
\[
    \Delta_{ij}(t):=x_j(t)-x_i(t).
\]

\begin{proposition}[Exact relative-position formula]
\label{prop:relative-position}
For the fixed-graph continuation generated by $G$,
\begin{equation}
    \Delta_{ij}(t)
    =
    \Delta_{ij}^0
    +
    \frac{1}{\kappa}
    b_{ij}^TL_G^\#
    \left(I-e^{-\kappa L_Gt}\right)v^0.
    \label{eq:relative-position}
\end{equation}
In particular,
\begin{equation}
    S_{ij}^G
    :=
    \lim_{t\to\infty}\Delta_{ij}(t)
    =
    \Delta_{ij}^0
    +
    \frac{1}{\kappa}
    b_{ij}^TL_G^\#v^0.
    \label{eq:frozen-terminal-separation}
\end{equation}
\end{proposition}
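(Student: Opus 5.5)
The plan is to derive \eqref{eq:relative-position} directly from the exact fixed-graph trajectory in Proposition~\ref{prop:fixed-trajectory}, and then pass to the limit using the spectral structure established earlier.

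First, apply the row vector $b_{ij}^T=(e_j-e_i)^T$ to \eqref{eq:exact-fixed-x}. The term $b_{ij}^Tx^0$ is $\Delta_{ij}^0$. The ballistic term becomes $t\,b_{ij}^T\Pi_Gv^0$. Because $(i,j)$ is an active edge of $G$, the vertices $i$ and $j$ lie in the same connected component $C$. By \eqref{eq:component-projection}, rows $i$ and $j$ of $\Pi_G$ both equal $(\pi^C)^T$ placed on the block of $C$. This is exactly \eqref{eq:bPi-zero}, so the term linear in $t$ drops out. The remaining term is precisely $\frac{1}{\kappa}b_{ij}^TL_G^\#(I-e^{-\kappa L_Gt})v^0$, which gives \eqref{eq:relative-position}. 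Since $\Delta_{ij}=x_j-x_i$ is linear in $x$, no other ingredient is needed.

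For the limit \eqref{eq:frozen-terminal-separation}, invoke Proposition~\ref{prop:fixed-consensus}, which gives $e^{-\kappa L_Gt}\to\Pi_G$. Hence
\[
L_G^\#\bigl(I-e^{-\kappa L_Gt}\bigr)\to L_G^\#(I-\Pi_G)=L_G^\#,
\]
where the last equality uses $L_G^\#\Pi_G=0$ from \eqref{eq:group-inverse-identities}. This limit is taken entrywise in a finite-dimensional space, so it passes through multiplication by the fixed vectors $b_{ij}^T$ and $v^0$. The result is $S_{ij}^G=\Delta_{ij}^0+\frac{1}{\kappa}b_{ij}^TL_G^\#v^0$, which matches \eqref{eq:terminal-separation}.

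The only step requiring care is justifying $b_{ij}^T\Pi_G=0$ when $G$ may be disconnected or contain isolated vertices. I would check it blockwise. $\Pi_G$ is block diagonal over the components, and an edge never joins distinct components or touches a singleton. The difference of two identical rows of $\Pi_C$ therefore vanishes. Everything else is a direct substitution. It is worth noting that the statement concerns the \emph{fixed-graph continuation}, so no switching events enter here. The interpretation of $S_{ij}^G$ as the actual terminal separation is deferred to the later event-selection theorems.
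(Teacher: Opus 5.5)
Your proposal is correct and matches the paper's proof: you apply $b_{ij}^T$ to the exact fixed-graph trajectory, use $b_{ij}^T\Pi_G=0$ to remove the ballistic term, and then pass to the limit using $e^{-\kappa L_Gt}\to\Pi_G$ and $L_G^\#\Pi_G=0$. Your blockwise check of $b_{ij}^T\Pi_G=0$ for disconnected graphs with isolated vertices just expands what the paper asserts in \eqref{eq:bPi-zero}.
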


\begin{proof}
Apply $b_{ij}^T$ to \eqref{eq:exact-fixed-x}.  The ballistic term vanishes
by \eqref{eq:bPi-zero}, which gives
\eqref{eq:relative-position}.  Letting $t\to\infty$ and using
$L_G^\#\Pi_G=0$ yields \eqref{eq:frozen-terminal-separation}.
\end{proof}

We call $S_{ij}^G$ the \emph{frozen-graph terminal separation}.  It is the
limiting signed distance predicted if the current communication graph were
held fixed indefinitely.  In the actual state-dependent system, another
edge may be deleted before this limiting state is reached; therefore
$S_{ij}^G>r$ is not, by itself, a statement that the same edge must
eventually disappear after arbitrary intervening topology changes.  Its
exact role is to determine stability of the current graph and the next
topology event.

\subsubsection{Exact frozen edge-loss criterion in the expansive regime}

We now assume that the current state lies in the expansive cone.  By
Theorem~\ref{thm:order-preservation}, the adjacent velocity differences are
nonnegative.  The same positive-system argument applies to the frozen
continuation of the current graph, so for every $i<j$,
\begin{equation}
    \frac{d}{dt}\Delta_{ij}(t)
    =
    v_j(t)-v_i(t)
    \ge0.
    \label{eq:frozen-separation-monotone}
\end{equation}

\begin{theorem}[Frozen edge-loss criterion]
\label{thm:frozen-edge-loss}
Let $(i,j)\in E(G)$ with $i<j$, and suppose the current state is in the
expansive cone.  For the fixed-graph continuation generated by $G$:

\begin{enumerate}[label=(\roman*)]
\item if $S_{ij}^G<r$, then
\[
    \Delta_{ij}(t)<r
    \qquad\text{for every finite }t;
\]

\item if $S_{ij}^G=r$, then
\[
    \Delta_{ij}(t)<r
    \qquad\text{for every finite }t,
    \qquad
    \Delta_{ij}(t)\uparrow r
    \quad\text{as }t\to\infty;
\]

\item if $S_{ij}^G>r$, then there exists a unique finite time
$T_{ij}^G>0$ satisfying
\begin{equation}
    \Delta_{ij}^0
    +
    \frac{1}{\kappa}
    b_{ij}^TL_G^\#
    \left(I-e^{-\kappa L_GT_{ij}^G}\right)v^0
    =
    r.
    \label{eq:edge-hitting-equation}
\end{equation}
\end{enumerate}

Hence an active edge reaches the strict interaction boundary in finite
\emph{frozen} time if and only if
\begin{equation}
    S_{ij}^G>r.
    \label{eq:frozen-edge-iff}
\end{equation}
\end{theorem}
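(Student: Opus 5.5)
The proof rests on two facts. First, $\Delta_{ij}(t)$ is nondecreasing along the frozen continuation, by \eqref{eq:frozen-separation-monotone}. Second, it converges to $S_{ij}^G$, by Proposition~\ref{prop:relative-position}. Monotonicity needs one remark. The frozen continuation uses the fixed matrix $M_G$ from \eqref{eq:intertwining}, which is Metzler by Lemma~\ref{lem:metzler}. Corollary~\ref{cor:fixed-positive} then keeps $q(t)\ge0$ for all $t\ge0$, regardless of whether positions later violate the graph $G$. Hence $\dot\Delta_{ij}=v_j-v_i=\sum_{k=i}^{j-1}q_k\ge0$. Consequently $\Delta_{ij}(t)\le S_{ij}^G$ for all finite $t$, and $\Delta_{ij}(t)\uparrow S_{ij}^G$.

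Case (i) is then immediate: $\Delta_{ij}(t)\le S_{ij}^G<r$.

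For case (ii) I must exclude $\Delta_{ij}(T)=r$ at a finite $T$. Monotonicity would then force $\Delta_{ij}\equiv r$ on $[T,\infty)$, so $v_j=v_i$ there. The trajectory $v(t)=e^{-\kappa L_Gt}v^0$ is real-analytic in $t$, so $v_j-v_i\equiv0$ for all $t\ge0$. Then $\Delta_{ij}$ is constant, equal to $\Delta_{ij}^0$. But $(i,j)\in E(G)$ means $\Delta^0_{ij}<r$, which contradicts $\Delta_{ij}(T)=r$. Thus $\Delta_{ij}(t)<r$ for every finite $t$, while $\Delta_{ij}(t)\uparrow r$.

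For case (iii), the function $f(t)=\Delta_{ij}(t)-r$ is continuous with $f(0)<0$ and $\lim_{t\to\infty} f(t)=S_{ij}^G-r>0$. The intermediate value theorem gives a root, and by \eqref{eq:relative-position} any root solves \eqref{eq:edge-hitting-equation}. Uniqueness is the only delicate step, since monotonicity alone permits a plateau at level $r$. Suppose $f(T_1)=f(T_2)=0$ with $T_1<T_2$. Then $v_j-v_i\equiv0$ on $[T_1,T_2]$, and analyticity makes it vanish identically. That would make $\Delta_{ij}$ constant, contradicting $f(0)<0<\lim_{t\to\infty} f(t)$. Equivalently, I can argue directly: $\dot\Delta_{ij}$ is a finite exponential sum $\sum_\lambda c_\lambda e^{-\kappa\lambda t}$ over the eigenvalues of $L_G$. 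Such a sum is either identically zero or has isolated zeros, so $\Delta_{ij}$ is strictly increasing. Hence the root $T^G_{ij}$ is unique, and $T_{ij}^G>0$ because $f(0)<0$.

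The equivalence \eqref{eq:frozen-edge-iff} follows by combining the three cases. The main obstacle is the strictness argument in (ii) and (iii). It needs analyticity of the fixed-graph solution, i.e.\ the nonzero eigenvalues of $L_G$ from Lemma~\ref{lem:laplacian-similarity}, to rule out plateaus. Monotonicity alone only gives $\le$.
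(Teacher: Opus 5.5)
Your proposal is correct and follows essentially the same route as the paper. Monotonicity of $\Delta_{ij}$ along the frozen continuation (via the Metzler/positive-semigroup argument) together with convergence to $S_{ij}^G$ settles (i), and real-analyticity of the fixed-graph solution rules out a plateau at level $r$, giving strict inequality in (ii) and uniqueness of the hitting time in (iii). Two minor points: you apply analyticity to $\dot\Delta_{ij}=v_j-v_i$ rather than to $\Delta_{ij}$ itself, which is cosmetic, and analyticity of $t\mapsto e^{-\kappa L_G t}v^0$ holds for any constant generator, so the spectral structure of $L_G$ is needed only for the existence of the limit $S_{ij}^G$, not to rule out plateaus.
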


\begin{proof}
The initial edge is active, so $\Delta_{ij}^0<r$.  By
\eqref{eq:frozen-separation-monotone}, $\Delta_{ij}$ is nondecreasing.

If $S_{ij}^G<r$, monotonicity and convergence to $S_{ij}^G$ give part (i).

Suppose $S_{ij}^G=r$.  If $\Delta_{ij}(T)=r$ for some finite $T$, then
monotonicity together with the limiting value $r$ would force
$\Delta_{ij}(t)=r$ for every $t\ge T$.  The function
$\Delta_{ij}(t)$ is real analytic under the fixed-graph linear system, so
being constant on a nontrivial interval would imply that it is constant for
all $t$.  This contradicts $\Delta_{ij}^0<r$.  Thus the boundary is
approached only asymptotically, proving part (ii).

If $S_{ij}^G>r$, continuity gives at least one finite time at which
$\Delta_{ij}=r$.  Monotonicity shows that two distinct isolated crossings
are impossible; an interval of equality is ruled out by the same analyticity
argument.  Hence the hitting time is unique and satisfies
\eqref{eq:edge-hitting-equation}.
\end{proof}

\subsubsection{Stability of the current graph and the next event}

The frozen criterion becomes an exact statement about the actual switching
system when applied to the earliest candidate edge loss.

\begin{theorem}[Terminal-graph criterion and next topology event]
\label{thm:terminal-graph-next-event}
Assume the current state lies in the expansive cone and let $G$ be the
current communication graph.

\begin{enumerate}[label=(\roman*)]
\item The graph $G$ is terminal, meaning that the actual communication graph
remains equal to $G$ for all future time, if and only if
\begin{equation}
    S_{ij}^G\le r
    \qquad
    \text{for every }(i,j)\in E(G).
    \label{eq:terminal-graph-criterion}
\end{equation}

\item If at least one active edge satisfies $S_{ij}^G>r$, define
\[
    \E_+(G)
    :=
    \{(i,j)\in E(G):S_{ij}^G>r\}.
\]
For each edge in $\E_+(G)$ let $T_{ij}^G$ be the unique frozen hitting time
from Theorem~\ref{thm:frozen-edge-loss}, and define
\begin{equation}
    T_*(G)
    :=
    \min_{(i,j)\in\E_+(G)}T_{ij}^G.
    \label{eq:next-event-time}
\end{equation}
Then the actual graph remains equal to $G$ on $[0,T_*(G))$, and the next
actual topology event occurs at $T_*(G)$.  Exactly those active edges whose
frozen hitting time equals $T_*(G)$ are deleted at that event.
\end{enumerate}
\end{theorem}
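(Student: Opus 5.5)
The approach is to compare the actual switching trajectory with the frozen continuation of $G$, using the fact that they coincide as long as the actual graph equals $G$. Let $(\tilde x,\tilde v)$ be the fixed-graph solution of Proposition~\ref{prop:fixed-trajectory} started from the current state. The frozen velocity order $\tilde v_1\le\cdots\le\tilde v_N$ holds for all $t\ge0$ by Corollary~\ref{cor:fixed-positive}, since $M_G$ is Metzler (Lemma~\ref{lem:metzler}) and $q(0)\ge0$. Consequently every frozen separation $\tilde\Delta_{ij}$ is nondecreasing, as in \eqref{eq:frozen-separation-monotone}.

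The central step is a coincidence lemma. Define
\[
\tau:=\inf\{t>0:\ \tilde\Delta_{ij}(t)\ge r\text{ for some }(i,j)\in E(G)\}\in(0,\infty].
\]
I claim the actual solution equals the frozen one on $[0,\tau)$ and the actual graph is $G$ there.
\begin{itemize}[label=--]
\item For active pairs, $\tilde\Delta_{ij}<r$ on $[0,\tau)$ by definition of $\tau$.
\item For inactive pairs, $\tilde\Delta_{ij}\ge\tilde\Delta_{ij}(0)\ge r$ by monotonicity, so they remain inactive.
\item Spatial order is preserved because adjacent gaps are nondecreasing.
\end{itemize}
So the frozen trajectory generates the graph $G$ on $[0,\tau)$ and therefore solves the switching system there. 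Uniqueness of the piecewise-classical solution on a fixed-graph interval, which is a linear ODE, identifies it with the actual solution.

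By Theorem~\ref{thm:frozen-edge-loss}, an edge with $S_{ij}^G\le r$ never reaches $r$ in finite frozen time, while an edge in $\E_+(G)$ first reaches $r$ exactly at $T_{ij}^G$. Hence $\tau=T_*(G)$ when $\E_+(G)\neq\emptyset$, and $\tau=\infty$ otherwise.

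For part (i), sufficiency follows because $\tau=\infty$ gives actual graph $G$ for all time. For necessity, suppose some $S_{ij}^G>r$. Then $\tau=T_*<\infty$. By continuity of positions at $T_*$, the actual separation of the minimizing edge equals $r$, so that edge is deleted under the strict cut-off and $G$ is not terminal.

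For part (ii), on $[0,T_*)$ the actual graph is $G$ by the lemma. At $T_*$, an edge with $T_{ij}^G=T_*$ has $\Delta_{ij}(T_*)=r$ and is removed. Any other active edge has $\tilde\Delta_{ij}(T_*)<r$: either $S_{ij}^G\le r$, or it lies in $\E_+(G)$ with $T_{ij}^G>T_*$, and uniqueness and monotonicity of the hitting time give strict inequality before the hit. Such edges stay active at $T_*$. No edges are created, by Theorem~\ref{thm:order-preservation}. Hence the deleted set is exactly $\{(i,j)\in\E_+(G):T_{ij}^G=T_*\}$, and $T_*$ is a genuine topology event.

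The main obstacle is making the coincidence argument rigorous. It needs the inactive-pair monotonicity within the frozen flow rather than the actual flow; I will obtain this from frozen positivity, not from Theorem~\ref{thm:order-preservation}. It also needs uniqueness of the event-driven solution so that "the actual graph" is well defined up to $T_*$, and continuity at $T_*$ to pass from $[0,T_*)$ to the event time itself.
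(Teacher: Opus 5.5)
Your proposal is correct and follows essentially the same route as the paper: the actual and frozen trajectories coincide up to the first frozen hitting time, Theorem~\ref{thm:frozen-edge-loss} identifies that time as $T_*(G)$ (or $+\infty$), and continuity together with the strict cut-off determines exactly which edges are deleted. Your one variation is to keep inactive pairs inactive using frozen-flow positivity rather than citing Theorem~\ref{thm:order-preservation}, as the paper does; this is slightly more self-contained, and the explicit definition of $\tau$ makes rigorous the coincidence step that the paper states briefly.
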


\begin{proof}
If \eqref{eq:terminal-graph-criterion} holds, then
Theorem~\ref{thm:frozen-edge-loss} shows that no active edge reaches the
interaction boundary in finite frozen time.  Theorem~\ref{thm:order-preservation}
shows that no non-edge can enter the interaction range.  Hence no topology
event can occur, the frozen continuation is the actual trajectory, and $G$
is terminal.

Conversely, if some $S_{ij}^G>r$, at least one frozen candidate hitting time
is finite.  Let $T_*$ be the minimum.  No new edge can appear before $T_*$
by Theorem~\ref{thm:order-preservation}.  No active edge with
$S_{ij}^G\le r$ can disappear before $T_*$ by
Theorem~\ref{thm:frozen-edge-loss}, and no edge with $S_{ij}^G>r$ can
disappear before its own frozen hitting time.  Therefore no topology change
occurs on $[0,T_*)$, so the actual trajectory on that interval is exactly the
fixed-graph continuation.  At $T_*$, all and only the edges attaining the
minimum reach the interaction boundary and are deleted simultaneously.
\end{proof}

\begin{remark}[Edge loss versus fragmentation]
Theorem~\ref{thm:terminal-graph-next-event} predicts topology changes, not
only connectivity changes.  An edge deletion need not fragment the swarm:
deleting a non-bridge can leave the graph connected.  Finite-time
fragmentation occurs precisely when an event increases the number of
connected components.  For a path graph every edge is a bridge, so edge loss
and fragmentation coincide; this special case is developed in
Section~\ref{sec:path}.
\end{remark}

\begin{remark}[Why the recursion is necessary]
An edge with $S_{ij}^G>r$ is guaranteed to hit the boundary under the frozen
continuation of $G$, but another edge may hit first and change the subsequent
dynamics.  Thus $S_{ij}^G>r$ should not be interpreted as an unconditional
statement that the same edge must eventually be lost in the full switching
system.  The exact prediction is obtained by taking the earliest frozen
hitting event, updating the graph, and recomputing the quantities for the new
topology.  Section~\ref{sec:prediction} formalizes this finite recursion.
\end{remark}

\subsection{Terminal-cluster recursion}
\label{sec:prediction}

The previous section determines, from the current state and communication
graph, whether another topology change must occur and, if so, exactly when
the next event occurs.  We now iterate that construction.  Because
Theorem~\ref{thm:order-preservation} makes every edge deletion irreversible,
the recursion terminates after finitely many events and gives the terminal
communication graph, the terminal cluster partition, and the asymptotic
velocity of every cluster.

\subsubsection{Event-driven recursion}

Assume throughout this section that the initial state lies in the expansive
cone.  Set
\[
    t_0:=0,
    \qquad
    x^{(0)}:=x(0),
    \qquad
    v^{(0)}:=v(0),
    \qquad
    G_0:=G(0).
\]
Suppose recursively that the state immediately after the $k$th topology
event is
\[
    \bigl(x^{(k)},v^{(k)},G_k\bigr)
\]
at absolute time $t_k$.  Let
\[
    L_k:=L_{G_k},
    \qquad
    \Pi_k:=\Pi_{G_k}.
\]
For every active edge $e=(i,j)\in E(G_k)$ with $i<j$, define
\begin{equation}
    S_e^{(k)}
    :=
    x_j^{(k)}-x_i^{(k)}
    +
    \frac{1}{\kappa}
    (e_j-e_i)^TL_k^\#v^{(k)}.
    \label{eq:recursive-terminal-separation}
\end{equation}

If
\begin{equation}
    S_e^{(k)}\le r
    \qquad\text{for every }e\in E(G_k),
    \label{eq:recursive-stop}
\end{equation}
then $G_k$ is terminal by
Theorem~\ref{thm:terminal-graph-next-event}, and the recursion stops.

Otherwise define the set of unstable frozen edges
\begin{equation}
    \E_k^+
    :=
    \{e\in E(G_k):S_e^{(k)}>r\}.
    \label{eq:unstable-edge-set}
\end{equation}
For $e=(i,j)\in\E_k^+$, let $\tau_e^{(k)}>0$ denote the unique solution of
\begin{equation}
    x_j^{(k)}-x_i^{(k)}
    +
    \frac{1}{\kappa}
    (e_j-e_i)^TL_k^\#
    \left(I-e^{-\kappa L_k\tau_e^{(k)}}\right)v^{(k)}
    =
    r.
    \label{eq:recursive-hitting-time}
\end{equation}
The next inter-event time is
\begin{equation}
    \tau_{k+1}
    :=
    \min_{e\in\E_k^+}\tau_e^{(k)},
    \qquad
    t_{k+1}:=t_k+\tau_{k+1}.
    \label{eq:recursive-next-time}
\end{equation}

The exact state at the next event is
\begin{equation}
    v^{(k+1)}
    =
    e^{-\kappa L_k\tau_{k+1}}v^{(k)}
    \label{eq:recursive-v-update}
\end{equation}
and
\begin{equation}
    x^{(k+1)}
    =
    x^{(k)}
    +
    \tau_{k+1}\Pi_kv^{(k)}
    +
    \frac{1}{\kappa}
    L_k^\#
    \left(I-e^{-\kappa L_k\tau_{k+1}}\right)v^{(k)}.
    \label{eq:recursive-x-update}
\end{equation}
Let
\begin{equation}
    D_{k+1}
    :=
    \left\{
        e\in\E_k^+:
        \tau_e^{(k)}=\tau_{k+1}
    \right\}
    \label{eq:simultaneous-deletion-set}
\end{equation}
be the set of edges that hit the interaction boundary simultaneously.
Under the strict cut-off convention,
\begin{equation}
    E(G_{k+1})
    =
    E(G_k)\setminus D_{k+1}.
    \label{eq:recursive-graph-update}
\end{equation}

\begin{remark}[Numerical implementation]
Although \eqref{eq:recursive-hitting-time} need not have an elementary
closed-form solution for a general graph, each candidate separation is
nondecreasing in the expansive regime and its finite hitting time is unique.
Thus each $\tau_e^{(k)}$ is the unique root of a one-dimensional monotone
equation.  The recursion is therefore exact at the level of the dynamical
system while remaining straightforward to implement numerically.
\end{remark}

\subsubsection{Exact terminal-cluster prediction}

\begin{theorem}[Finite-event topology and cluster prediction]
\label{thm:finite-cluster-prediction}
Suppose
\[
    x_1(0)<\cdots<x_N(0),
    \qquad
    v_1(0)\le\cdots\le v_N(0).
\]
Then the event-driven recursion
\eqref{eq:recursive-terminal-separation}--\eqref{eq:recursive-graph-update}
has the following properties.

\begin{enumerate}[label=(\roman*)]
\item At every stage $k$, the recursively constructed trajectory on
$[t_k,t_{k+1})$ coincides with the actual solution of
\eqref{eq:x-dynamics}--\eqref{eq:isolated}.

\item Whenever the stopping condition \eqref{eq:recursive-stop} fails,
$D_{k+1}$ is nonempty and
\begin{equation}
    E(G_{k+1})\subsetneq E(G_k).
    \label{eq:strict-edge-decrease}
\end{equation}

\item The recursion terminates after a finite number $M$ of topology-event
times satisfying
\begin{equation}
    M
    \le
    |E(G_0)|-|E(G_M)|
    \le
    |E(G_0)|.
    \label{eq:event-count-bound}
\end{equation}

\item The terminal graph produced by the recursion is exactly the terminal
communication graph of the full switching dynamics:
\begin{equation}
    G_M=G_\infty.
    \label{eq:predicted-terminal-graph}
\end{equation}
Consequently, the terminal cluster partition and cluster number are
\begin{equation}
    \Comp(G_\infty)=\Comp(G_M),
    \qquad
    K_\infty=|\Comp(G_M)|.
    \label{eq:predicted-clusters}
\end{equation}
\end{enumerate}
\end{theorem}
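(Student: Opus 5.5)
The argument is an induction on the stage index $k$, with Theorem~\ref{thm:terminal-graph-next-event} applied at each stage. The inductive hypothesis is that $(x^{(k)},v^{(k)},G_k)$ at time $t_k$ is the actual state of the solution of \eqref{eq:x-dynamics}--\eqref{eq:isolated}, including its actual communication graph. By Theorem~\ref{thm:order-preservation}, this state lies in the expansive cone. At $k=0$ the hypothesis is the initial data.

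\textbf{Part (i) and the inductive step.} Apply Theorem~\ref{thm:terminal-graph-next-event} with time shifted to start at $t_k$. Suppose the stopping condition \eqref{eq:recursive-stop} holds. Then part (i) of that theorem says $G_k$ is the actual graph for all $t\ge t_k$, so the fixed-graph trajectory of Proposition~\ref{prop:fixed-trajectory} is the actual solution. Now suppose \eqref{eq:recursive-stop} fails. Then $\E_k^+\neq\emptyset$, and each $\tau_e^{(k)}$ exists and is unique by Theorem~\ref{thm:frozen-edge-loss}(iii). Part (ii) of Theorem~\ref{thm:terminal-graph-next-event} shows that the actual graph equals $G_k$ on $[t_k,t_{k+1})$. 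On that interval the actual trajectory is therefore the fixed-graph trajectory, which gives part (i). Continuity of $(x,v)$ across the event, together with Proposition~\ref{prop:fixed-trajectory} evaluated at $\tau_{k+1}$, gives the updates \eqref{eq:recursive-v-update}--\eqref{eq:recursive-x-update}. The same theorem says exactly the edges in $D_{k+1}$ are deleted. Theorem~\ref{thm:order-preservation} rules out any new edges, so the actual graph at $t_{k+1}$ is $E(G_k)\setminus D_{k+1}=E(G_{k+1})$, and the inductive hypothesis propagates.

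\textbf{Part (ii).} $D_{k+1}$ is nonempty because a minimum over the nonempty finite set $\E_k^+$ is attained. The inclusion is strict because $D_{k+1}\subseteq E(G_k)$ and deleting a nonempty set removes edges.

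\textbf{Parts (iii) and (iv).} For (iii), each event removes at least one edge, so $|E(G_{k+1})|\le|E(G_k)|-1$. Summing over $k$ gives $M\le|E(G_0)|-|E(G_M)|$, so the recursion must stop after at most $|E(G_0)|$ events. This is consistent with Corollary~\ref{cor:finite-switching}. For (iv), at the final stage $M$ the stopping condition holds. By Theorem~\ref{thm:terminal-graph-next-event}(i) and part (i) of the present theorem, the actual graph equals $G_M$ for all $t\ge t_M$. By the definition of the terminal graph, $G_\infty=G_M$ with $T_\infty=t_M$, and the component statements \eqref{eq:predicted-clusters} follow immediately.

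\textbf{Main obstacle.} The delicate point is verifying the hypotheses of Theorem~\ref{thm:terminal-graph-next-event} at each restart. Specifically, the state just after an event must lie in the expansive cone, and $G_{k+1}$ must be the actual graph of the post-event positions. The first follows from Theorem~\ref{thm:order-preservation}. For the second, edges in $D_{k+1}$ have separation exactly $r$ and are excluded under the strict cut-off. Every other edge of $G_k$ still has separation $<r$ at $t_{k+1}$: for $e\notin\E_k^+$ this is Theorem~\ref{thm:frozen-edge-loss}(i)--(ii), and for $e\in\E_k^+\setminus D_{k+1}$ it holds because $\tau_{k+1}<\tau_e^{(k)}$ and the separation is monotone. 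These edges therefore remain active.
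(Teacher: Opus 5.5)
Your proposal is correct and matches the paper's proof: both argue by induction over events, use Theorem~\ref{thm:terminal-graph-next-event} at each restart and Theorem~\ref{thm:order-preservation} to keep the state in the expansive cone, and obtain termination and the bound from the strictly shrinking edge sets. Your explicit check that the undeleted edges of $G_k$ are still active at $t_{k+1}$ fills in a point the paper leaves implicit; for the edges in $\E_k^+\setminus D_{k+1}$, that check needs the uniqueness of the hitting time $\tau_e^{(k)}$, not just monotonicity.
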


\begin{proof}
The proof is by induction over topology events.

At $k=0$, Theorem~\ref{thm:terminal-graph-next-event} states that if
\eqref{eq:recursive-stop} holds, then $G_0$ is already terminal.  Otherwise,
the actual graph remains equal to $G_0$ until the smallest frozen hitting
time $\tau_1$, and the state on that interval is exactly the fixed-graph
solution.  Equations \eqref{eq:recursive-v-update} and
\eqref{eq:recursive-x-update} therefore give the actual state at $t_1$.
Exactly the edges in $D_1$ hit the cut-off at that time and are deleted.

Theorem~\ref{thm:order-preservation} implies that the state at $t_1$ remains
in the expansive cone.  Hence the same argument applies with $G_1$ in place
of $G_0$.  Repeating establishes part (i) at every stage and shows that, when
the stopping condition fails, at least one edge is deleted.  This proves
\eqref{eq:strict-edge-decrease}.

No deleted edge can ever return by
Corollary~\ref{cor:finite-switching}.  Thus distinct nonterminal recursion
steps delete disjoint nonempty sets of edges.  The number of event times is
therefore at most the total number of deleted edges, giving
\eqref{eq:event-count-bound}.

Since the recursion must terminate, let $M$ be its final index.  At that
stage all active edges satisfy $S_e^{(M)}\le r$, so
Theorem~\ref{thm:terminal-graph-next-event} implies that $G_M$ remains
unchanged for all future time.  Because every preceding recursive segment
coincides with the actual trajectory, $G_M$ is exactly the terminal graph of
the full switching system.  Part (iv) follows.
\end{proof}

\begin{corollary}[Exact decision of fragmentation]
\label{cor:fragmentation-decision}
Suppose the hypotheses of Theorem~\ref{thm:finite-cluster-prediction} hold
and $G_0$ is connected.  Then finite-time fragmentation occurs if and only
if
\begin{equation}
    K_\infty>1.
    \label{eq:fragmentation-iff-K}
\end{equation}
Equivalently, the recursion decides fragmentation exactly from the initial
state.
\end{corollary}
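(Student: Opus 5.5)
The corollary follows from Theorem~\ref{thm:finite-cluster-prediction} combined with the irreversibility statement in Theorem~\ref{thm:order-preservation}. The plan is to show that, in the expansive regime, the number of connected components of $G(t)$ is a nondecreasing integer-valued step function. It starts at $1$ because $G_0$ is connected, changes only at the finitely many event times $t_1<\dots<t_M$, and is eventually equal to $K_\infty=|\Comp(G_M)|$.

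For the forward direction, suppose $K_\infty>1$. By part (iv) of Theorem~\ref{thm:finite-cluster-prediction}, $G(t)=G_M$ for all $t\ge t_M$. The time $t_M$ is finite and is itself an event time, since $M\ge1$ is forced: $G_0$ is connected while $G_M$ is not, so $G_M\neq G_0$. Hence $G(t_M)=G_M$ is disconnected. This gives a finite event time $T=t_M$ at which the graph is disconnected, which is fragmentation in the sense of the definition.

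For the converse, suppose $G(T)$ is disconnected at some finite event time $T$. By \eqref{eq:edge-monotonicity}, $E(t)\subseteq E(T)$ for all $t\ge T$. A subgraph on the same vertex set $V$ obtained by deleting edges has at least as many connected components as the original graph, so $G(t)$ remains disconnected for all $t\ge T$. Taking $t\ge\max(T,t_M)$ and using $G(t)=G_\infty=G_M$ gives $K_\infty\ge2$.

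The final sentence of the corollary, that the recursion decides fragmentation from the initial state, holds because the recursion \eqref{eq:recursive-terminal-separation}--\eqref{eq:recursive-graph-update} is computed entirely from $(x(0),v(0))$ and produces $G_M$ exactly. The only subtle point is the monotonicity of the component count under edge deletion. This needs the facts that vertices are never removed and edges are never created, and both are supplied by Corollary~\ref{cor:finite-switching}. Beyond that, the argument is bookkeeping.
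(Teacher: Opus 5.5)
Your proof is correct and follows the paper's own argument. For the forward direction you exhibit a finite event time at which the graph is disconnected: you take the last event $t_M$, while the paper takes the first disconnecting event, and the definition of fragmentation makes either choice sufficient. For the converse, you use irreversibility of edge deletion to carry the disconnection forward to the terminal graph $G_M=G_\infty$, exactly as the paper does.
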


\begin{proof}
If the terminal graph has more than one connected component, then, because
the graph begins connected and changes only at finitely many edge-deletion
events, there is a first event at which connectivity is lost.  Conversely,
once the graph becomes disconnected, irreversibility of edge deletion
prevents its components from reconnecting.  Hence finite-time fragmentation
is equivalent to $K_\infty>1$.
\end{proof}

\subsubsection{Asymptotic velocities and relative geometry}

Let
\[
    G_\infty=C_1\cup\cdots\cup C_{K_\infty}
\]
denote the terminal connected-component decomposition, and let $t_M$ be the
last topology-event time.  For a nontrivial component $C$, write
$\deg_C(i)$ for the degree of $i$ in the terminal graph.

\begin{corollary}[Terminal cluster velocities]
\label{cor:terminal-cluster-velocities}
For every nontrivial terminal component $C$,
\begin{equation}
    v_i(t)\longrightarrow V_C^\infty
    \qquad
    (i\in C),
    \label{eq:terminal-component-consensus}
\end{equation}
where
\begin{equation}
    V_C^\infty
    =
    \frac{
        \sum_{i\in C}\deg_C(i)v_i(t_M)
    }{
        \sum_{i\in C}\deg_C(i)
    }.
    \label{eq:terminal-cluster-velocity}
\end{equation}
If $C=\{i\}$ is a singleton, then
\begin{equation}
    V_C^\infty=v_i(t_M).
    \label{eq:singleton-terminal-velocity}
\end{equation}
Thus the recursion determines both the terminal partition and the
asymptotic bulk velocity of every cluster.
\end{corollary}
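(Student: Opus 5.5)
The plan is to reduce to the fixed-graph analysis on the final interval $[t_M,\infty)$. By Theorem~\ref{thm:finite-cluster-prediction}, the recursion terminates at stage $M$. Its trajectory coincides with the actual solution on every segment, and $G(t)=G_M=G_\infty$ for all $t\ge t_M$. If $M=0$ we read $t_M=0$. On $[t_M,\infty)$ the velocity equation is therefore the linear system $\dot v=-\kappa L_{G_\infty}v$ with initial datum $v(t_M)$, which is well defined because velocities are continuous across events. By Proposition~\ref{prop:fixed-trajectory} with the time origin shifted to $t_M$,
\[
v(t)=e^{-\kappa L_{G_\infty}(t-t_M)}v(t_M),\qquad t\ge t_M .
\]

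Next we use the block structure of $L_{G_\infty}$. It is block diagonal over $\Comp(G_\infty)$, since no entry of $P_{G_\infty}$ couples distinct components. Hence the semigroup is block diagonal as well, and the restriction of $v(t)$ to a component $C$ evolves only through the block $L_C$. Applying Proposition~\ref{prop:fixed-consensus} to each block gives $e^{-\kappa L_C s}\to\one_C(\pi^C)^T$ as $s\to\infty$.

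For a nontrivial component $C$ this yields $v_i(t)\to(\pi^C)^Tv_C(t_M)$ for every $i\in C$. Substituting the definition \eqref{eq:stationary-pi} of $\pi^C$, with degrees taken in the terminal graph, gives exactly \eqref{eq:terminal-cluster-velocity}. Equivalently, Corollary~\ref{cor:weighted-momentum} shows that $\sum_{i\in C}\deg_C(i)v_i$ is conserved on $[t_M,\infty)$, and evaluating it at $t_M$ and in the limit gives the same value.

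For a singleton $C=\{i\}$, vertex $i$ is isolated in $G_\infty$, so \eqref{eq:isolated} gives $\dot v_i=0$ for $t\ge t_M$ and hence $v_i(t)=v_i(t_M)$ for all later times. This is \eqref{eq:singleton-terminal-velocity}.

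There is no serious obstacle here. The only point needing care is that the degrees entering the weighted average are those of the terminal graph. The conserved quantity changes at every event, as noted in the remark after Corollary~\ref{cor:weighted-momentum}, so the average must be evaluated at the last event time $t_M$ and not at $t=0$. Part (i) of Theorem~\ref{thm:finite-cluster-prediction} guarantees that $v(t_M)$ is the actual state at that time, and it is computable from the recursion via \eqref{eq:recursive-v-update}.
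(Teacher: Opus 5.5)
Your proposal is correct and follows the paper's own argument: after the last event time $t_M$ the graph is frozen at $G_\infty$, and Proposition~\ref{prop:fixed-consensus} applied independently to each terminal component gives the degree-weighted limits and the constant singleton velocities. Your extra remarks spell out what the paper's one-line proof leaves implicit: continuity of $v$ at $t_M$, degrees taken in the terminal graph, and the $M=0$ convention.
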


\begin{proof}
After $t_M$ the graph is fixed at $G_\infty$.  Apply
Proposition~\ref{prop:fixed-consensus} independently on each terminal
component.
\end{proof}

The same fixed-graph formula also determines the terminal geometry modulo
the common translational motion of each component.

\begin{corollary}[Asymptotic internal geometry]
\label{cor:terminal-geometry}
Let $s=t-t_M\ge0$.  Then
\begin{equation}
    x(t_M+s)
    -
    s\,\Pi_{G_\infty}v(t_M)
    \longrightarrow
    x(t_M)
    +
    \frac{1}{\kappa}
    L_{G_\infty}^\#v(t_M)
    \qquad
    \text{as }s\to\infty.
    \label{eq:terminal-geometry-limit}
\end{equation}
In particular, for any two vertices $i,j$ belonging to the same terminal
component,
\begin{equation}
    x_j(t)-x_i(t)
    \longrightarrow
    x_j(t_M)-x_i(t_M)
    +
    \frac{1}{\kappa}
    (e_j-e_i)^TL_{G_\infty}^\#v(t_M).
    \label{eq:terminal-pair-geometry}
\end{equation}
\end{corollary}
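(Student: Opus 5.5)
The argument applies Proposition~\ref{prop:fixed-trajectory} on the final interval $[t_M,\infty)$, with the state $(x(t_M),v(t_M))$ as initial data. By Theorem~\ref{thm:finite-cluster-prediction}, the actual graph equals $G_\infty$ for all $t\ge t_M$. Since the system is autonomous on this interval, a time shift places us exactly in the setting of Proposition~\ref{prop:fixed-trajectory} with $G=G_\infty$, $x^0=x(t_M)$, $v^0=v(t_M)$. The proposition then gives, for $s\ge0$,
\[
x(t_M+s)-s\,\Pi_{G_\infty}v(t_M)
=x(t_M)+\frac1\kappa L_{G_\infty}^\#\bigl(I-e^{-\kappa L_{G_\infty}s}\bigr)v(t_M).
\]

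To take the limit $s\to\infty$, I would invoke Proposition~\ref{prop:fixed-consensus}, which says $e^{-\kappa L_{G_\infty}s}\to\Pi_{G_\infty}$. Combining this with the identity $L_{G_\infty}^\#\Pi_{G_\infty}=0$ from \eqref{eq:group-inverse-identities}, the right-hand side converges to $x(t_M)+\kappa^{-1}L_{G_\infty}^\#v(t_M)$. This proves \eqref{eq:terminal-geometry-limit}.

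For \eqref{eq:terminal-pair-geometry}, fix $i,j$ in the same terminal component $C$ and apply $(e_j-e_i)^T$ to the displayed identity. By \eqref{eq:component-projection}, rows $i$ and $j$ of $\Pi_{G_\infty}$ both equal $(\pi^C)^T$. Hence $(e_j-e_i)^T\Pi_{G_\infty}=0$ and the ballistic term drops out. This is the same observation as \eqref{eq:bPi-zero}, except that it now holds for any pair in one component, not only for edges. So $x_j(t)-x_i(t)$ equals the $(e_j-e_i)$-projection of the left side of \eqref{eq:terminal-geometry-limit}, and the limit follows.

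I expect no real obstacle. The only points needing care are that the fixed-graph formula is legitimately applied from the restart time $t_M$ rather than from $0$, and that the equal-rows argument is stated for arbitrary same-component pairs. For a singleton component the statement is vacuous or trivial, since $L^\#$ has a zero block there.
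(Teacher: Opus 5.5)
Your proposal is correct and follows the paper's proof: apply Proposition~\ref{prop:fixed-trajectory} on $[t_M,\infty)$ with $G=G_\infty$ and data $(x(t_M),v(t_M))$, then let $s\to\infty$ using $e^{-\kappa L_{G_\infty}s}\to\Pi_{G_\infty}$ and $L_{G_\infty}^\#\Pi_{G_\infty}=0$. Your explicit check that $(e_j-e_i)^T\Pi_{G_\infty}=0$ for any pair in the same component, not only for edges, fills in a detail the paper leaves implicit.
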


\begin{proof}
Apply Proposition~\ref{prop:fixed-trajectory} to the terminal graph with
initial time shifted to $t_M$ and let $s\to\infty$.
\end{proof}

\section{Explicit fragmentation thresholds on a path}
\label{sec:path}

The general theory of Subsections~\ref{sec:fixed}--\ref{sec:prediction}
predicts topology changes recursively for any communication graph arising in
the expansive regime.  For a path graph the structure is substantially more
explicit.  The adjacent velocity differences form a closed symmetric linear
system, its Green matrix can be written in closed form, and finite-time
fragmentation is characterized by a sharp critical alignment strength.

\subsection{Path geometry and adjacent-difference dynamics}

Assume that the initial communication graph is the path
\[
    P_N:\qquad 1-2-\cdots-N.
\]
With
\[
    d_i^0:=x_{i+1}^0-x_i^0,
    \qquad
    i=1,\ldots,N-1,
\]
this is equivalent, under the strict cut-off convention, to
\begin{equation}
    0<d_i^0<r,
    \qquad i=1,\ldots,N-1,
    \label{eq:path-nearest}
\end{equation}
together with
\begin{equation}
    d_i^0+d_{i+1}^0\ge r,
    \qquad i=1,\ldots,N-2.
    \label{eq:path-next-nearest}
\end{equation}
Indeed, \eqref{eq:path-nearest} makes every nearest-neighbor pair active,
whereas \eqref{eq:path-next-nearest} excludes every pair at index distance
two, and therefore every more distant pair as well.

Assume also that the initial velocities are nondecreasing:
\begin{equation}
    v_1^0\le v_2^0\le\cdots\le v_N^0.
    \label{eq:path-ordered-v}
\end{equation}
Set
\[
    m:=N-1,
    \qquad
    q_i:=v_{i+1}-v_i,
    \qquad
    d_i:=x_{i+1}-x_i.
\]
Then $q^0\ge0$.  By Theorem~\ref{thm:order-preservation}, no new edge can
appear.  Hence the graph remains $P_N$ until the first path edge is deleted.

For $N\ge3$, the endpoint velocities satisfy
\[
    \dot v_1=\kappa q_1,
    \qquad
    \dot v_N=-\kappa q_m,
\]
while for $2\le i\le N-1$,
\begin{equation}
    \dot v_i
    =
    \frac{\kappa}{2}(q_i-q_{i-1}).
    \label{eq:path-internal-v}
\end{equation}
Consequently,
\[
    \dot q_1
    =
    \frac{\kappa}{2}(q_2-3q_1),
\]
\[
    \dot q_i
    =
    \frac{\kappa}{2}
    (q_{i-1}-2q_i+q_{i+1}),
    \qquad
    2\le i\le m-1,
\]
and
\[
    \dot q_m
    =
    \frac{\kappa}{2}(q_{m-1}-3q_m).
\]

Define, for $m\ge2$,
\begin{equation}
    C_m
    :=
    \begin{pmatrix}
        3&-1&&&\\
        -1&2&-1&&\\
        &-1&2&\ddots&\\
        &&\ddots&2&-1\\
        &&&-1&3
    \end{pmatrix}
    \in\R^{m\times m},
    \label{eq:Cm}
\end{equation}
and for $m=1$ set
\begin{equation}
    C_1:=[4].
    \label{eq:C1}
\end{equation}
Then the adjacent velocity differences satisfy the unified equation
\begin{equation}
    \dot q
    =
    -\frac{\kappa}{2}C_mq.
    \label{eq:path-q-system}
\end{equation}

\begin{lemma}[Spectral and positivity properties of $C_m$]
\label{lem:Cm-properties}
For every $m\ge1$, $C_m$ is symmetric positive definite.  For $m\ge2$,
its eigenvalues are
\begin{equation}
    \lambda_k
    =
    2-2\cos\frac{k\pi}{m},
    \qquad
    k=1,\ldots,m.
    \label{eq:Cm-eigenvalues}
\end{equation}
Moreover,
\begin{equation}
    e^{-\kappa C_mt/2}\ge0
    \qquad\text{entrywise for every }t\ge0.
    \label{eq:path-positive-semigroup}
\end{equation}
If $m\ge2$, $q^0\ge0$, and $q^0\ne0$, then
\begin{equation}
    q_i(t)>0
    \qquad
    \text{for every }i=1,\ldots,m
    \text{ and every }t>0.
    \label{eq:path-strict-positive-q}
\end{equation}
\end{lemma}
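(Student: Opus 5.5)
The plan is to treat the four assertions in order: symmetry and positive definiteness, the explicit spectrum, entrywise positivity of the semigroup, and strict positivity of $q(t)$. The case $m=1$ is immediate. $C_1=[4]$ is symmetric positive definite and $e^{-2\kappa t}>0$; the eigenvalue formula and the strict-positivity claim are stated only for $m\ge2$. From now on assume $m\ge2$. Symmetry of $C_m$ is visible from \eqref{eq:Cm}. Positive definiteness will follow from the explicit spectrum, since $2-2\cos(k\pi/m)>0$ for $1\le k\le m$.

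\textbf{Spectrum.} I will use a discrete sine ansatz with ghost-point boundary conditions. The interior rows of $C_m u=\lambda u$ read $-u_{j-1}+2u_j-u_{j+1}=\lambda u_j$. The first and last rows, $3u_1-u_2=\lambda u_1$ and $-u_{m-1}+3u_m=\lambda u_m$, are the same recurrence with ghost values $u_0=-u_1$ and $u_{m+1}=-u_m$.

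Take $u_j=\sin\bigl((j-\tfrac12)\theta\bigr)$. Then the interior recurrence holds with $\lambda=2-2\cos\theta$, and $u_0=\sin(-\theta/2)=-u_1$ holds automatically. The right condition $u_{m+1}=-u_m$ becomes
\[
\sin\bigl((m+\tfrac12)\theta\bigr)+\sin\bigl((m-\tfrac12)\theta\bigr)=2\sin(m\theta)\cos(\theta/2)=0,
\]
which is satisfied by $\theta=k\pi/m$ for $k=1,\ldots,m$.

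I then check that each resulting vector is nonzero. For $k<m$ this holds because $u_1=\sin(\theta/2)\ne0$. For $k=m$ one gets the alternating vector $u_j=\pm1$. The $m$ values $2-2\cos(k\pi/m)$ are distinct because $\cos$ is injective on $(0,\pi]$. This yields $m$ distinct eigenvalues and hence the full spectrum \eqref{eq:Cm-eigenvalues}.

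\textbf{Positivity of the semigroup.} I will follow the argument of Corollary~\ref{cor:fixed-positive}. Write $-C_m=A-4I$ with $A:=4I-C_m$. The matrix $A$ is entrywise nonnegative: its diagonal entries are $1$ or $2$ and its off-diagonal entries are $0$ or $1$. Then
\[
e^{-\kappa C_m t/2}=e^{-2\kappa t}e^{\kappa A t/2},
\]
and the power series of the second factor has only nonnegative terms, which gives \eqref{eq:path-positive-semigroup}.

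\textbf{Strict positivity of $q(t)$.} This is the only step needing more than bookkeeping. The key fact is that $A$ is irreducible, because its off-diagonal $1$'s connect $j$ and $j+1$. Since $A$ also has a positive diagonal, $A^{m-1}$ is entrywise strictly positive: a walk of length exactly $m-1$ exists between any pair of indices, padding with self-loops as needed.

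Hence for $t>0$ the series $e^{\kappa At/2}=\sum_n (\kappa t/2)^nA^n/n!$ contains the strictly positive term $(\kappa t/2)^{m-1}A^{m-1}/(m-1)!$. All other terms are nonnegative, so $e^{-\kappa C_mt/2}$ is entrywise strictly positive. Finally, by \eqref{eq:path-q-system}, $q(t)=e^{-\kappa C_mt/2}q^0$. Each component is a sum of strictly positive matrix entries times the nonnegative entries of $q^0$, at least one of which is positive. This proves \eqref{eq:path-strict-positive-q}.
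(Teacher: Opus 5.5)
Your proof is correct and follows essentially the paper's route: the same half-integer sine eigenvectors $\sin\bigl((j-\tfrac12)k\pi/m\bigr)$ (which you verify more completely than the paper, via the ghost conditions $u_0=-u_1$, $u_{m+1}=-u_m$), the same shift of the Metzler matrix by a multiple of the identity for entrywise nonnegativity, and irreducibility for strict positivity, which you make explicit through $A^{m-1}>0$. The only deviation is that you deduce positive definiteness from the computed spectrum, whereas the paper reads it off the identity $z^TC_mz=\sum_{i=1}^{m-1}(z_i-z_{i+1})^2+2z_1^2+2z_m^2$, which makes that part independent of the eigenvalue calculation.
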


\begin{proof}
The case $m=1$ is immediate.  For $m\ge2$,
\begin{equation}
    z^TC_mz
    =
    \sum_{i=1}^{m-1}(z_i-z_{i+1})^2
    +
    2z_1^2+2z_m^2,
    \label{eq:Cm-quadratic}
\end{equation}
which is strictly positive for $z\ne0$.  Thus $C_m$ is symmetric positive
definite.

The eigenvalue formula \eqref{eq:Cm-eigenvalues} follows by solving the
second-order difference equation in the interior together with the two
endpoint conditions.  Equivalently, one may use the eigenvectors with
components proportional to
\[
    \sin\left(\frac{(i-\frac12)k\pi}{m}\right),
    \qquad i=1,\ldots,m,
\]
with the usual separate normalization for the highest mode $k=m$.

The matrix $-\frac{\kappa}{2}C_m$ is Metzler, so it generates a
nonnegative semigroup, proving \eqref{eq:path-positive-semigroup}.  For
$m\ge2$ the Metzler matrix is irreducible.  Hence its exponential is
strictly positive for every $t>0$, which gives
\eqref{eq:path-strict-positive-q} whenever $q^0\ge0$ is nonzero.
\end{proof}

\begin{remark}[Alignment time scale on a long path]
For $m\gg1$,
\[
    \lambda_1
    =
    2-2\cos\frac{\pi}{m}
    \sim
    \frac{\pi^2}{m^2}.
\]
Thus the slowest decay time in \eqref{eq:path-q-system} scales as
\[
    \frac{2}{\kappa\lambda_1}
    \sim
    \frac{2m^2}{\kappa\pi^2}.
\]
Long paths therefore align increasingly slowly at the global scale.
\end{remark}

\subsection{Exact gap dynamics and the path Green matrix}

Solving \eqref{eq:path-q-system} gives
\begin{equation}
    q(t)
    =
    e^{-\kappa C_mt/2}q^0.
    \label{eq:path-q-solution}
\end{equation}
Since $\dot d=q$,
\begin{equation}
    d(t)
    =
    d^0
    +
    \frac{2}{\kappa}
    C_m^{-1}
    \left(I-e^{-\kappa C_mt/2}\right)q^0.
    \label{eq:path-d-solution}
\end{equation}
If the path is held fixed indefinitely, then
\begin{equation}
    d^\ast
    :=
    \lim_{t\to\infty}d(t)
    =
    d^0+\frac{2}{\kappa}C_m^{-1}q^0.
    \label{eq:path-terminal-gap}
\end{equation}

The inverse of $C_m$ has an explicit Green-kernel representation.

\begin{proposition}[Closed-form path Green matrix]
\label{prop:Cm-inverse}
For every $m\ge1$,
\begin{equation}
    (C_m^{-1})_{ij}
    =
    \frac{
        (2\min\{i,j\}-1)
        \,[\,2(m-\max\{i,j\})+1\,]
    }{4m},
    \qquad
    1\le i,j\le m.
    \label{eq:Cm-inverse}
\end{equation}
In particular,
\begin{equation}
    (C_m^{-1})_{ij}>0
    \qquad
    \text{for all }i,j.
    \label{eq:Cm-inverse-positive}
\end{equation}
\end{proposition}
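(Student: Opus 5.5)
The plan is to verify directly that the proposed matrix $G$ with entries
\[
    G_{ij}=\frac{(2\min\{i,j\}-1)\,[\,2(m-\max\{i,j\})+1\,]}{4m}
\]
satisfies $C_mG=I$. Since $C_m$ is invertible by Lemma~\ref{lem:Cm-properties}, this identifies $G=C_m^{-1}$. The structure of $G$ is the standard Green kernel of a second-order difference operator. Fix a column $j$ and write $G_{ij}=\frac{1}{4m}\phi(i)\psi(j)$ for $i\le j$ and $G_{ij}=\frac{1}{4m}\phi(j)\psi(i)$ for $i\ge j$, where
\[
    \phi(i):=2i-1,\qquad \psi(i):=2(m-i)+1.
\]
Both $\phi$ and $\psi$ are affine in $i$, so the interior operator $-z_{i-1}+2z_i-z_{i+1}$ annihilates each of them. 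This gives $(C_mG)_{ij}=0$ for interior rows $2\le i\le m-1$ with $i\ne j$, since on each side of $j$ the column is a multiple of an affine function.

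Next I check the boundary conditions. At row $1$ (with $j\ge2$) we need $3\phi(1)-\phi(2)=3-3=0$. At row $m$ (with $j\le m-1$) we need $3\psi(m)-\psi(m-1)=3-3=0$. So the left solution $\phi$ encodes the first-row condition and the right solution $\psi$ encodes the last-row condition, exactly as in the Green-function construction.

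The diagonal entry uses the jump condition. For an interior $i=j$,
\[
    4m\,(C_mG)_{jj}=-\phi(j-1)\psi(j)+2\phi(j)\psi(j)-\phi(j)\psi(j+1).
\]
Using $\phi(j-1)=\phi(j)-2$ and $\psi(j+1)=\psi(j)-2$, this equals $2\psi(j)+2\phi(j)=2(2m)=4m$, so the entry is $1$. The endpoint diagonals $j=1$ and $j=m$ are handled the same way with the coefficient $3$. For $j=1$,
\[
    3\phi(1)\psi(1)-\phi(1)\psi(2)=\psi(1)+2=2m+2\psi(1)-\psi(2)\cdot 1,
\]
which should be checked to equal $4m$. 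Concretely, $3(2m-1)-(2m-3)=4m$. The case $j=m$ follows by the symmetry $i\mapsto m+1-i$, which maps $\phi$ to $\psi$ and preserves $C_m$.

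The degenerate cases need separate treatment. For $m=2$ the rows are all boundary rows and can be verified directly. For $m=1$ the formula gives $1\cdot1/4=1/4=C_1^{-1}$, as required.

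Positivity is immediate. For $1\le i,j\le m$ we have $2\min\{i,j\}-1\ge1$ and $2(m-\max\{i,j\})+1\ge1$, so every entry is strictly positive. The main obstacle is only bookkeeping at the endpoint rows and the small-$m$ cases, where the rows with coefficient $3$ coincide with the diagonal; I will handle these by explicit substitution.
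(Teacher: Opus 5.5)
Your proposal is correct and follows the paper's route: you define $G$ by the formula, note that each column is affine in $i$ on either side of the diagonal so off-diagonal interior rows vanish, check the two endpoint rows, and use the slope jump $2(\phi(j)+\psi(j))=4m$ to get $1$ on the diagonal, so $C_mG=I$. One slip: the chain $3\phi(1)\psi(1)-\phi(1)\psi(2)=\psi(1)+2=\dots$ is wrong as written (it should read $3\psi(1)-\psi(2)=2\psi(1)+2=4m$), but your concrete check $3(2m-1)-(2m-3)=4m$ is right, and your general argument already covers $m=2$ without separate treatment.
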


\begin{proof}
For $m=1$, \eqref{eq:Cm-inverse} gives $C_1^{-1}=[1/4]$.
Assume $m\ge2$ and define $G$ by the right-hand side of
\eqref{eq:Cm-inverse}.  Fix a column $j$.  For $i\le j$,
\[
    G_{ij}
    =
    \frac{(2i-1)[\,2(m-j)+1\,]}{4m},
\]
which is affine in $i$, while for $i\ge j$,
\[
    G_{ij}
    =
    \frac{(2j-1)[\,2(m-i)+1\,]}{4m},
\]
which is also affine in $i$.  Therefore the interior second difference
vanishes away from $i=j$:
\[
    -G_{i-1,j}+2G_{ij}-G_{i+1,j}=0,
    \qquad
    i\ne j.
\]
At $i=j$, the left and right discrete slopes differ by one, giving
\[
    -G_{j-1,j}+2G_{jj}-G_{j+1,j}=1
\]
for an interior index $j$.  The endpoint rows satisfy
\[
    3G_{1j}-G_{2j}=\delta_{1j},
    \qquad
    -G_{m-1,j}+3G_{mj}=\delta_{mj}.
\]
Thus $C_mG=I$, proving \eqref{eq:Cm-inverse}.  Positivity is immediate from
the explicit formula.
\end{proof}

\begin{corollary}[Nonlocal accumulation of velocity gradients]
\label{cor:path-nonlocal}
If $q^0\ge0$ and $q^0\ne0$, then
\begin{equation}
    (C_m^{-1}q^0)_i>0
    \qquad
    \text{for every }i.
    \label{eq:path-nonlocal-positive}
\end{equation}
Thus a positive initial velocity difference anywhere on the path contributes
to the total accumulated expansion of every path edge.
\end{corollary}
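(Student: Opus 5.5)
This follows directly from the entrywise positivity of the Green matrix in Proposition~\ref{prop:Cm-inverse}, equation \eqref{eq:Cm-inverse-positive}. I would write
\[
    (C_m^{-1}q^0)_i=\sum_{j=1}^m (C_m^{-1})_{ij}\,q_j^0 .
\]
Each coefficient $(C_m^{-1})_{ij}$ is strictly positive. By hypothesis each $q_j^0\ge0$, so every term in the sum is nonnegative.

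Since $q^0\ne0$, there is an index $j_0$ with $q^0_{j_0}>0$. The corresponding term $(C_m^{-1})_{ij_0}q^0_{j_0}$ is then strictly positive for every row $i$, and adding nonnegative terms cannot reduce it. Hence $(C_m^{-1}q^0)_i>0$ for all $i$, which is \eqref{eq:path-nonlocal-positive}.

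For the interpretive sentence, I would combine this with \eqref{eq:path-terminal-gap}: the accumulated expansion of edge $i$ is $d_i^\ast-d_i^0=\frac{2}{\kappa}(C_m^{-1}q^0)_i$. Each summand $(C_m^{-1})_{ij}q_j^0$ with $q_j^0>0$ contributes strictly, so a positive gradient on any edge $j$ enlarges the expansion of every edge $i$.

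The case $m=1$ is trivial, since $C_1^{-1}=[1/4]$. No step is a real obstacle; the only care needed is to note that the strict positivity comes from a single positive entry of $q^0$ paired with an entire column of strictly positive kernel values.
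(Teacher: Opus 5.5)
Your proof is correct and follows the same route as the paper, whose proof is just a citation of the entrywise positivity \eqref{eq:Cm-inverse-positive} of the path Green matrix. You additionally spell out the one-line argument: a single positive entry $q^0_{j_0}$, multiplied by the strictly positive column $(C_m^{-1})_{\cdot j_0}$, makes every component strictly positive.
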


\begin{proof}
This follows from \eqref{eq:Cm-inverse-positive}.
\end{proof}

\begin{remark}[A conserved frozen-path vector]
Equation \eqref{eq:path-q-system} and $\dot d=q$ imply
\begin{equation}
    \frac{d}{dt}
    \left(
        d+\frac{2}{\kappa}C_m^{-1}q
    \right)
    =0.
    \label{eq:path-conserved-vector}
\end{equation}
Thus $d^\ast$ in \eqref{eq:path-terminal-gap} is not merely a formal
long-time limit: it is the value of the conserved vector
$d+\frac{2}{\kappa}C_m^{-1}q$ along every fixed-path interval.
\end{remark}

\subsection{Sharp finite-time fragmentation criterion}

Because a path edge is a bridge, the first edge loss disconnects the graph.
The frozen terminal gaps therefore yield a necessary-and-sufficient
fragmentation criterion.

\begin{theorem}[Exact path fragmentation criterion]
\label{thm:path-fragmentation}
Assume \eqref{eq:path-nearest}--\eqref{eq:path-ordered-v}.  Define $d^\ast$
by \eqref{eq:path-terminal-gap}.  Then:

\begin{enumerate}[label=(\roman*)]
\item the path remains connected for every finite time if and only if
\begin{equation}
    d_i^\ast\le r
    \qquad
    \text{for every }i=1,\ldots,m;
    \label{eq:path-no-frag-condition}
\end{equation}

\item finite-time fragmentation occurs if and only if
\begin{equation}
    \max_{1\le i\le m}d_i^\ast>r.
    \label{eq:path-frag-condition}
\end{equation}
\end{enumerate}

If $d_i^\ast=r$ for one or more indices while
$d_j^\ast\le r$ for every $j$, the corresponding critical gaps approach
$r$ only asymptotically and no finite-time fragmentation occurs.
\end{theorem}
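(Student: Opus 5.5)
The plan is to reduce the statement to Theorem~\ref{thm:terminal-graph-next-event} and Theorem~\ref{thm:frozen-edge-loss} applied at time $0$ with $G=P_N$. The only work is to identify the frozen terminal separations of the path edges with the components of $d^\ast$ and to note that every path edge is a bridge.

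\textbf{Step 1: identify $S^{P_N}_{i,i+1}$ with $d_i^\ast$.} By hypothesis the initial state lies in the expansive cone and $G(0)=P_N$. Its edges are exactly $(i,i+1)$, $i=1,\ldots,m$. Under the frozen continuation of $P_N$, the gap vector evolves by \eqref{eq:path-d-solution}, which is derived from \eqref{eq:path-q-system} and $\dot d=q$, so its limit is $d^\ast$ from \eqref{eq:path-terminal-gap}. On the other hand, Proposition~\ref{prop:relative-position} says $\Delta_{i,i+1}(t)\to S^{P_N}_{i,i+1}$ along the same frozen trajectory. Since both expressions describe the same frozen trajectory, the limits agree:
\[
    S^{P_N}_{i,i+1}=d_i^\ast,\qquad i=1,\ldots,m.
\]
This avoids having to compute $L_{P_N}^\#$ explicitly. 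Equivalently, one could invoke the conserved vector \eqref{eq:path-conserved-vector}.

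\textbf{Step 2: terminal versus nonterminal.} If $d_i^\ast\le r$ for all $i$, then \eqref{eq:terminal-graph-criterion} holds. By Theorem~\ref{thm:terminal-graph-next-event}(i), the actual graph equals $P_N$ for all time, so it stays connected. If some $d_i^\ast>r$, then $\E_+(P_N)\neq\emptyset$, and by Theorem~\ref{thm:terminal-graph-next-event}(ii) an actual edge deletion occurs at the finite time $T_*(P_N)$. Every edge of a path is a bridge, so the deleted edges disconnect the graph, and $G(T_*)$ is disconnected. Since $G(0)$ is connected, this is fragmentation in finite time by definition.

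These two cases are exhaustive and mutually exclusive, which gives both (i) and (ii). Part (i) needs one extra observation for the direction "connected for all finite time $\Rightarrow$ \eqref{eq:path-no-frag-condition}". If some $d_i^\ast>r$, fragmentation occurs by the second case, so connectivity fails; this is the contrapositive. Alternatively, Corollary~\ref{cor:fragmentation-decision} can be used here.

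\textbf{Step 3: the critical case.} Suppose $d_j^\ast\le r$ for all $j$ and $d_i^\ast=r$ for some $i$. Then the graph is terminal by Step~2, so the actual trajectory coincides with the frozen continuation for all time. Theorem~\ref{thm:frozen-edge-loss}(ii) then gives $d_i(t)<r$ for every finite $t$ and $d_i(t)\uparrow r$. The equality can only arise asymptotically because $d_i^\ast=r>d_i^0$ forces $(C_m^{-1}q^0)_i>0$; in particular $q^0\ne0$.

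\textbf{Main obstacle.} The only delicate point is Step~1, justifying that the path formula's $d^\ast$ is exactly the frozen separation used in the general theorems. One needs the reduced $q$-system \eqref{eq:path-q-system} to be valid under the frozen continuation even beyond the first actual event, which holds because the frozen continuation fixes $P_N$ by definition. The case $m=1$, where $C_1=[4]$ and the two endpoint equations combine, needs only a one-line check that $\dot q=-2\kappa q$.
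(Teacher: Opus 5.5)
Your proposal is correct, but it is organized differently from the paper's proof.

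\textbf{Your route.} You treat the theorem as a corollary of Theorem~\ref{thm:terminal-graph-next-event}. The only new input is the identification $S^{P_N}_{i,i+1}=d_i^\ast$, which you obtain by computing the limit of the same frozen trajectory in two ways. Parts (i)--(ii) then follow from the terminal/nonterminal dichotomy together with the bridge property. The critical case follows from Theorem~\ref{thm:frozen-edge-loss}(ii), once terminality shows that the actual and frozen trajectories coincide.

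\textbf{The paper's route.} The paper argues directly on the path system. Lemma~\ref{lem:Cm-properties} gives $q(t)\ge0$, so the gaps are nondecreasing. The three cases $d_i^\ast<r$, $d_i^\ast=r$, $d_i^\ast>r$ are then handled on the explicit gap vector $d(t)$, with the analyticity argument for equality. Theorem~\ref{thm:frozen-edge-loss} is cited only for the converse.

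\textbf{What each buys.} Your version is shorter and slightly more careful in two ways:
\begin{enumerate}[label=(\alph*)]
\item It makes explicit the identification of the frozen terminal gap with $d_i^\ast$, which the paper's converse step uses tacitly.
\item By invoking part (ii) of the next-event theorem, it never has to follow one particular gap with $d_i^\ast>r$ into the actual dynamics. The first actual deletion may be a different edge; in the paper's argument this is harmless only because every path edge is a bridge.
\end{enumerate}
The paper's version is more self-contained on the path and keeps the group-inverse machinery out of the forward direction.

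\textbf{Wording in Step~3.} The fact that contact is only asymptotic comes from Theorem~\ref{thm:frozen-edge-loss}(ii), that is, from analyticity and monotonicity. It does not come from $(C_m^{-1}q^0)_i>0$; that observation only shows $q^0\neq0$.
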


\begin{proof}
While the graph remains $P_N$, Lemma~\ref{lem:Cm-properties} gives
$q(t)\ge0$, so each gap $d_i(t)$ is nondecreasing.  If
$d_i^\ast<r$, then $d_i(t)<r$ for every finite $t$.  If
$d_i^\ast=r$, then $d_i(t)$ approaches $r$ monotonically.  It cannot reach
$r$ at a finite time: otherwise monotonicity and the limiting value would
force it to remain identically equal to $r$ thereafter, contradicting the
analytic fixed-path dynamics and the initial inequality $d_i^0<r$.

If some $d_i^\ast>r$, continuity and monotonicity imply that this gap reaches
$r$ in finite time.  The corresponding path edge is then deleted and, being
a bridge, disconnects the graph.  Conversely, every finite-time
fragmentation of a path must begin with the deletion of some path edge, and
Theorem~\ref{thm:frozen-edge-loss} implies that its frozen terminal gap
exceeds $r$.
\end{proof}

For $q^0\ge0$, define the accumulated expansion coefficients
\begin{equation}
    F_i(q^0)
    :=
    2(C_m^{-1}q^0)_i.
    \label{eq:Fi}
\end{equation}
Then
\[
    d_i^\ast=d_i^0+\frac{F_i(q^0)}{\kappa}.
\]

\begin{theorem}[Sharp critical alignment strength]
\label{thm:kappa-critical}
Under the hypotheses of Theorem~\ref{thm:path-fragmentation}, define
\begin{equation}
    \kappa_c
    :=
    \max_{1\le i\le m}
    \frac{
        2(C_m^{-1}q^0)_i
    }{
        r-d_i^0
    }.
    \label{eq:kappa-critical}
\end{equation}
If $q^0=0$, then $\kappa_c=0$.  For every $\kappa>0$,
\begin{equation}
    0<\kappa<\kappa_c
    \quad\Longleftrightarrow\quad
    \text{finite-time fragmentation},
    \label{eq:kappa-below}
\end{equation}
whereas
\begin{equation}
    \kappa\ge\kappa_c
    \quad\Longleftrightarrow\quad
    \text{no finite-time fragmentation}.
    \label{eq:kappa-above}
\end{equation}
If $q^0\ne0$ and $\kappa=\kappa_c$, at least one path gap converges to $r$
as $t\to\infty$, but no edge is deleted at finite time.
\end{theorem}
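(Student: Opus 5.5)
The plan is to reduce everything to Theorem~\ref{thm:path-fragmentation} by rewriting its criterion in terms of $\kappa$. Because $d_i^\ast = d_i^0 + F_i(q^0)/\kappa$ with $F_i(q^0)=2(C_m^{-1}q^0)_i$, and because $r-d_i^0>0$ by \eqref{eq:path-nearest}, we have, for every $\kappa>0$ and every index $i$,
\[
    d_i^\ast>r
    \iff
    F_i(q^0)>\kappa\,(r-d_i^0)
    \iff
    \kappa<\frac{F_i(q^0)}{r-d_i^0}.
\]
The same equivalence holds with $>$ replaced by $=$ or by $\le$.

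We first dispose of the case $q^0=0$. Then every $F_i(q^0)=0$, so $\kappa_c=0$. Moreover $d^\ast=d^0<r$ componentwise, so Theorem~\ref{thm:path-fragmentation} rules out fragmentation for every $\kappa>0$. Both \eqref{eq:kappa-below} and \eqref{eq:kappa-above} then hold trivially: the condition $0<\kappa<0$ is empty, and $\kappa\ge 0$ always holds.

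Now take $q^0\ge0$ with $q^0\ne0$. Corollary~\ref{cor:path-nonlocal} gives $F_i(q^0)>0$ for every $i$, so each ratio $\kappa_i:=F_i(q^0)/(r-d_i^0)$ is strictly positive and finite, and $\kappa_c=\max_i\kappa_i>0$. By Theorem~\ref{thm:path-fragmentation}(ii), finite-time fragmentation is equivalent to $\max_i d_i^\ast>r$. By the displayed equivalence, this holds iff $\kappa<\kappa_i$ for some $i$, which is the same as $\kappa<\kappa_c$. That proves \eqref{eq:kappa-below}. Taking the complement within $\kappa>0$ and using part (i) of that theorem gives \eqref{eq:kappa-above}. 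The argument is independent of the time dynamics: $d^\ast$ depends on $\kappa$ only through the explicit factor $1/\kappa$, since $C_m$ is $\kappa$-free.

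For the critical case $\kappa=\kappa_c$, choose an index $i_0$ attaining the maximum, so that $d_{i_0}^\ast=r$, while $d_j^\ast\le r$ for all $j$. The final clause of Theorem~\ref{thm:path-fragmentation} then shows that no edge is deleted in finite time. Consequently the graph stays $P_N$ for all time, and \eqref{eq:path-d-solution} holds globally, giving $d_{i_0}(t)\to d_{i_0}^\ast=r$ as $t\to\infty$ with $d_{i_0}(t)<r$ at every finite $t$.

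The only delicate point is that the criterion must refer to the actual switching dynamics rather than the frozen one. This is already handled by Theorem~\ref{thm:path-fragmentation}: on a path, the first deletion is itself a fragmentation, so no recursion is needed. The remaining care is bookkeeping: keeping $r-d_i^0>0$ so that dividing preserves the direction of the inequality, and using the strict positivity of $F_i$ so that $\kappa_c$ is a genuine maximum over positive finite values.
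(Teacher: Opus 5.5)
Your proposal is correct and follows essentially the same route as the paper: use $r-d_i^0>0$ to rewrite $d_i^\ast>r$ as $\kappa<2(C_m^{-1}q^0)_i/(r-d_i^0)$, take the maximum over $i$, apply Theorem~\ref{thm:path-fragmentation}, and use positivity of $C_m^{-1}$ together with that theorem's equality clause for $\kappa=\kappa_c$. Your separate treatment of $q^0=0$ and your remark that no edge is ever created simply spell out details the paper leaves implicit.
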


\begin{proof}
Because $r-d_i^0>0$, the condition $d_i^\ast>r$ is equivalent to
\[
    \kappa
    <
    \frac{2(C_m^{-1}q^0)_i}{r-d_i^0}.
\]
Taking the maximum over $i$ and applying
Theorem~\ref{thm:path-fragmentation} proves
\eqref{eq:kappa-below}--\eqref{eq:kappa-above}.  If $q^0\ne0$, positivity of
$C_m^{-1}$ implies $(C_m^{-1}q^0)_i>0$ for every $i$.  At
$\kappa=\kappa_c$, at least one maximizing edge satisfies
$d_i^\ast=r$, and the equality statement follows from
Theorem~\ref{thm:path-fragmentation}.
\end{proof}

\begin{remark}[Dimensionless form]
Let
\[
    \alpha_i:=\frac{d_i^0}{r},
    \qquad
    \beta_i:=\frac{q_i^0}{\kappa r}.
\]
Then
\begin{equation}
    \frac{d^\ast}{r}
    =
    \alpha+2C_m^{-1}\beta,
    \label{eq:path-dimensionless}
\end{equation}
and fragmentation is equivalent to
\begin{equation}
    \max_i
    \left[
        \alpha_i+2(C_m^{-1}\beta)_i
    \right]
    >1.
    \label{eq:path-dimensionless-frag}
\end{equation}
Thus the path threshold depends only on dimensionless geometry and velocity
differences.
\end{remark}

\subsection{First fragmentation time}

When $\kappa<\kappa_c$, define
\[
    \mathcal I_+
    :=
    \{i:d_i^\ast>r\}.
\]
For every $i\in\mathcal I_+$, let $T_i$ be the unique solution of
\begin{equation}
    d_i^0
    +
    \frac{2}{\kappa}
    e_i^TC_m^{-1}
    \left(I-e^{-\kappa C_mT_i/2}\right)q^0
    =
    r.
    \label{eq:path-edge-hit-time}
\end{equation}
Then
\begin{equation}
    T_{\mathrm{frag}}
    =
    \min_{i\in\mathcal I_+}T_i.
    \label{eq:path-first-frag-time}
\end{equation}
If $q^0\ne0$, Lemma~\ref{lem:Cm-properties} gives $q_i(t)>0$ for all
$t>0$, so every candidate gap is strictly increasing and each $T_i$ is
unique.  Several path edges may attain the minimum simultaneously; all of
them are deleted at the first fragmentation event.

After an edge deletion, every connected component is again a path, the
velocity ordering remains valid, and the same calculation applies
recursively to each component.  Hence the complete terminal cluster
partition of an expansive path can be obtained using only path Green
matrices of smaller sizes.  Since $P_N$ has only $N-1$ edges, at most
$N-1$ edge-deletion events can occur.

\subsection{Two explicit examples}

\begin{example}[Two agents]
\label{ex:path-N2}
For $N=2$, write
\[
    d_0=x_2^0-x_1^0,
    \qquad
    q_0=v_2^0-v_1^0\ge0.
\]
Since $C_1=[4]$,
\[
    q(t)=q_0e^{-2\kappa t}
\]
and
\begin{equation}
    d(t)
    =
    d_0+
    \frac{q_0}{2\kappa}
    \left(1-e^{-2\kappa t}\right).
    \label{eq:N2-d}
\end{equation}
Thus
\[
    d^\ast=d_0+\frac{q_0}{2\kappa},
\]
and
\begin{equation}
    \kappa_c
    =
    \frac{q_0}{2(r-d_0)}.
    \label{eq:N2-kappa-critical}
\end{equation}
For $\kappa<\kappa_c$, the fragmentation time is explicitly
\begin{equation}
    T_{\mathrm{frag}}
    =
    -\frac{1}{2\kappa}
    \log
    \left(
        1-\frac{2\kappa(r-d_0)}{q_0}
    \right).
    \label{eq:N2-frag-time}
\end{equation}
At $\kappa=\kappa_c$, $d(t)\uparrow r$ only as $t\to\infty$.
\end{example}

\begin{example}[Three-agent path]
\label{ex:path-N3}
For $N=3$, let
\[
    a_0=x_2^0-x_1^0,
    \qquad
    b_0=x_3^0-x_2^0,
\]
and
\[
    u_0=v_2^0-v_1^0,
    \qquad
    w_0=v_3^0-v_2^0,
\]
with $u_0,w_0\ge0$.  Here
\[
    C_2
    =
    \begin{pmatrix}
        3&-1\\
        -1&3
    \end{pmatrix},
    \qquad
    C_2^{-1}
    =
    \frac18
    \begin{pmatrix}
        3&1\\
        1&3
    \end{pmatrix}.
\]
Therefore
\begin{equation}
    a^\ast
    =
    a_0+\frac{3u_0+w_0}{4\kappa},
    \qquad
    b^\ast
    =
    b_0+\frac{u_0+3w_0}{4\kappa}.
    \label{eq:N3-terminal-gaps}
\end{equation}
Finite-time fragmentation occurs if and only if at least one of these
quantities exceeds $r$, and the sharp critical coupling is
\begin{equation}
    \kappa_c
    =
    \max
    \left\{
        \frac{3u_0+w_0}{4(r-a_0)},
        \frac{u_0+3w_0}{4(r-b_0)}
    \right\}.
    \label{eq:N3-kappa-critical}
\end{equation}
The cross terms in \eqref{eq:N3-terminal-gaps} illustrate the nonlocal
nature of the path Green matrix: the initial velocity difference on either
edge contributes to the accumulated expansion of the other.
\end{example}

\section{Extensions and limits of the finite-size theory}
\label{sec:spectral}

We now delimit and extend the hypotheses behind the main results. First, the
ordered-velocity assumption can be relaxed for a class of path data: the
slowest fixed-path mode may drive mixed-sign velocity differences into the
expansive cone before any geometric event. We then isolate the finite-event
mechanism as a conditional principle for switching linear relaxation and
verify it for a family of normalized averages with uniform self-weight.

The distinction between \emph{spectral entry} and the fully general switching
problem is essential.  The spectral calculation below is exact while the
communication graph remains a path; additional geometric conditions are
needed to guarantee that the actual state-dependent system remains in that
path cell long enough for the entry to occur.

\subsection{Eventual positivity for the frozen path}

Let $N\ge3$, $m=N-1$, and suppose for the moment that the communication graph
is held fixed at $P_N$.  We allow arbitrary
\[
    q^0=(v_2^0-v_1^0,\ldots,v_N^0-v_{N-1}^0)^T\in\R^m,
\]
with no sign restriction.  By \eqref{eq:path-q-system},
\begin{equation}
    q(t)=e^{-\kappa C_mt/2}q^0.
    \label{eq:spectral-q}
\end{equation}

The smallest eigenvalue of $C_m$ is
\[
    \lambda_1
    =
    2-2\cos\frac{\pi}{m},
\]
and a normalized associated eigenvector is
\begin{equation}
    \phi_{1,i}
    =
    \sqrt{\frac{2}{m}}
    \sin\left(
        \frac{(i-\frac12)\pi}{m}
    \right),
    \qquad
    i=1,\ldots,m.
    \label{eq:first-eigenvector}
\end{equation}
Every component of $\phi_1$ is strictly positive.  Define
\begin{equation}
    c_1:=\phi_1^Tq^0.
    \label{eq:c1}
\end{equation}

\begin{theorem}[Spectral characterization of eventual expansive entry]
\label{thm:spectral-eventual-positive}
Assume $q^0\ne0$ and let the path be held fixed.  Then the following are
equivalent:
\begin{enumerate}[label=(\roman*)]
\item $c_1>0$;
\item there exists a finite time $T$ such that $q(t)\ge0$ for all
$t\ge T$;
\item there exists a finite time $T$ such that
\begin{equation}
    q_i(t)>0
    \qquad
    \text{for every }i=1,\ldots,m
    \text{ and all }t>T.
    \label{eq:eventual-strict-positive}
\end{equation}
\end{enumerate}
\end{theorem}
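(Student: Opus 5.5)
Since $C_m$ is symmetric positive definite (Lemma~\ref{lem:Cm-properties}), I will expand the solution \eqref{eq:spectral-q} in an orthonormal eigenbasis $\phi_1,\ldots,\phi_m$:
\[
    q(t)=\sum_{k=1}^m c_k e^{-\kappa\lambda_k t/2}\phi_k,
    \qquad c_k=\phi_k^Tq^0 .
\]
The first thing to check is that $\lambda_1$ is a \emph{simple} eigenvalue with eigenvector $\phi_1$ from \eqref{eq:first-eigenvector}. I will get this from Perron--Frobenius applied to the irreducible Metzler matrix $-C_m$, or directly from the formula \eqref{eq:Cm-eigenvalues}: the values $2-2\cos(k\pi/m)$ are strictly increasing in $k$. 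Consequently
\[
    e^{\kappa\lambda_1t/2}q(t)=c_1\phi_1+\sum_{k\ge2}c_k e^{-\kappa(\lambda_k-\lambda_1)t/2}\phi_k ,
\]
and the sum is bounded in norm by $\|q^0\|e^{-\kappa(\lambda_2-\lambda_1)t/2}$, so it tends to zero exponentially.

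\textbf{(i)$\Rightarrow$(iii).} Put $\mu:=\min_i\phi_{1,i}>0$. Choose $T$ so large that $\|q^0\|e^{-\kappa(\lambda_2-\lambda_1)T/2}<c_1\mu$. Then for all $t>T$, every component satisfies
\[
    e^{\kappa\lambda_1t/2}q_i(t)>c_1\mu-c_1\mu=0 .
\]
Multiplying by the positive factor $e^{-\kappa\lambda_1t/2}$ gives \eqref{eq:eventual-strict-positive}.

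\textbf{(iii)$\Rightarrow$(ii).} This is immediate.

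\textbf{(ii)$\Rightarrow$(i).} This is the direction needing care, because $c_1$ could be zero, and then the leading mode is some higher one. Assume $q(t)\ge0$ for $t\ge T$. Take the inner product with $\phi_1$ and use symmetry of $C_m$:
\[
    \phi_1^Tq(t)=c_1e^{-\kappa\lambda_1t/2}.
\]
Since $\phi_1>0$ and $q(t)\ge0$, the left side is $\ge0$, hence $c_1\ge0$. To exclude $c_1=0$, suppose it holds. Since $q^0\ne0$, some $c_k\ne0$; let $k_0\ge2$ be the smallest such index. Because the eigenvalues are distinct, $\lambda_{k_0}$ is simple and
\[
    e^{\kappa\lambda_{k_0}t/2}q(t)\to c_{k_0}\phi_{k_0}\ne0 .
\]
The limit of nonnegative vectors is nonnegative, so $c_{k_0}\phi_{k_0}\ge0$ and nonzero. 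But $\phi_{k_0}\perp\phi_1$ with $\phi_1$ strictly positive, so $\phi_{k_0}$ must have entries of both signs. This is a contradiction, so $c_1>0$.

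The main obstacle is this last step. It needs $c_1=0$ to be ruled out rather than merely $c_1\ge0$, and it needs simplicity of the eigenvalues so that the next surviving mode dominates cleanly. If simplicity for $k\ge2$ were doubtful, I would instead group the eigenvalue-$\lambda_{k_0}$ eigenspace: its component of $q^0$ is a nonzero vector orthogonal to $\phi_1$, and the same sign contradiction applies to it.
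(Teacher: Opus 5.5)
Your proof is correct. For (i)$\Rightarrow$(iii)$\Rightarrow$(ii) it is the paper's argument: the slowest mode dominates in the orthonormal eigenbasis, and $\phi_1$ is strictly positive. You make it quantitative much as in Proposition~\ref{prop:Tord}, with the cruder constant $\|q^0\|$ in place of $R$.

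The routes differ in (ii)$\Rightarrow$(i). You get only $c_1\ge0$ from $\phi_1^Tq(t)=c_1e^{-\kappa\lambda_1t/2}$. You then exclude $c_1=0$ by a second asymptotic argument: the first surviving mode $c_{k_0}\phi_{k_0}$ is a limit of nonnegative vectors, yet it is orthogonal to $\phi_1$ and hence sign-indefinite.

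The paper does this in one line. Since $e^{-\kappa C_mt/2}$ is invertible and $q^0\ne0$, we have $q(t)\ne0$. A nonnegative $q(t)$ is therefore a nonzero nonnegative vector, so $\phi_1^Tq(t)>0$ and $c_1>0$ directly. This is exactly the observation you use to finish your contradiction, applied to $q(t)$ itself rather than to the limiting mode.

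The paper's version is shorter and needs no information about the higher eigenvalues, neither their distinctness nor your eigenspace-grouping fallback. It also requires $q(t)\ge0$ at a single time rather than on all of $[T,\infty)$, and the subsequent remark on $c_1=0$ relies on that single-time form. What your version adds is the asymptotic picture when $c_1=0$: $q(t)$ becomes aligned with a sign-changing eigenvector.
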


\begin{proof}
Let $\{\phi_k\}_{k=1}^m$ be an orthonormal eigenbasis of $C_m$ and write
\[
    q^0
    =
    c_1\phi_1+\sum_{k=2}^m c_k\phi_k.
\]
Then
\begin{equation}
    q(t)
    =
    c_1e^{-\kappa\lambda_1t/2}\phi_1
    +
    \sum_{k=2}^m
    c_ke^{-\kappa\lambda_kt/2}\phi_k.
    \label{eq:spectral-decomposition-q}
\end{equation}
Because $\lambda_1<\lambda_k$ for every $k\ge2$, if $c_1>0$ the first term
eventually dominates all higher modes.  Since $\phi_1$ is strictly positive
componentwise, $q(t)$ is strictly positive componentwise for all sufficiently
large $t$.  Thus (i) implies (iii), and (iii) implies (ii).

Conversely,
\[
    \phi_1^Tq(t)
    =
    c_1e^{-\kappa\lambda_1t/2}.
    \label{eq:first-mode-projection}
\]
If $q(t)\ge0$ at any finite time, then $q(t)\ne0$ because the matrix
exponential in \eqref{eq:spectral-q} is invertible and $q^0\ne0$.  Since
$\phi_1>0$ componentwise,
\[
    \phi_1^Tq(t)>0,
\]
and hence $c_1>0$.  Therefore (ii) implies (i).
\end{proof}

\begin{remark}[The cases $c_1\le0$]
If $c_1<0$, the slowest mode in
\eqref{eq:spectral-decomposition-q} eventually dominates with negative
sign, so $q_i(t)<0$ for every $i$ for all sufficiently large $t$.  If
$c_1=0$ and $q^0\ne0$, then
$\phi_1^Tq(t)=0$ for every $t$.  Because $\phi_1$ is strictly positive,
$q(t)$ can never lie in the nonnegative orthant unless $q(t)=0$, which is
impossible at finite time for nonzero $q^0$.  Thus $c_1>0$ is not merely a
sufficient slow-mode condition; it is the exact fixed-path criterion for
eventual entry into the expansive velocity cone.
\end{remark}

\subsection{An explicit ordering-time bound}

The proof above gives a quantitative sufficient time for entry.  Define
\begin{equation}
    r^0:=q^0-c_1\phi_1,
    \qquad
    R:=\|r^0\|_2,
    \label{eq:spectral-remainder}
\end{equation}
and
\begin{equation}
    \phi_\ast
    :=
    \min_{1\le i\le m}\phi_{1,i}
    =
    \sqrt{\frac{2}{m}}
    \sin\frac{\pi}{2m}.
    \label{eq:phi-star}
\end{equation}
If $c_1>0$ and $R>0$, set
\begin{equation}
    T_{\rm ord}
    :=
    \frac{2}{\kappa(\lambda_2-\lambda_1)}
    \left[
        \log\frac{R}{c_1\phi_\ast}
    \right]_+,
    \qquad
    [a]_+:=\max\{a,0\}.
    \label{eq:Tord}
\end{equation}
If $R=0$, set $T_{\rm ord}:=0$.

\begin{proposition}[Explicit entry-time estimate]
\label{prop:Tord}
If $c_1>0$, then under the frozen path dynamics
\begin{equation}
    q(t)\ge0
    \qquad
    \text{for every }t\ge T_{\rm ord},
    \label{eq:Tord-nonnegative}
\end{equation}
and $q(t)>0$ componentwise for every $t>T_{\rm ord}$.
\end{proposition}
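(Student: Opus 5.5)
We decompose $q^0$ as in the proof of Theorem~\ref{thm:spectral-eventual-positive}, $q^0=c_1\phi_1+r^0$, where $r^0=\sum_{k\ge2}c_k\phi_k$ is orthogonal to $\phi_1$. Since $C_m$ is symmetric, the frozen evolution \eqref{eq:spectral-q} acts diagonally in the orthonormal eigenbasis. Hence
\[
    q(t)=c_1e^{-\kappa\lambda_1t/2}\phi_1+e^{-\kappa C_mt/2}r^0 .
\]
The remainder stays in the invariant subspace $\phi_1^\perp$, on which the spectrum of $C_m$ is $\{\lambda_2,\ldots,\lambda_m\}$, with smallest value $\lambda_2$ because the eigenvalues in \eqref{eq:Cm-eigenvalues} are strictly increasing in $k$. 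Therefore $\|e^{-\kappa C_mt/2}r^0\|_2\le e^{-\kappa\lambda_2t/2}R$. The supremum norm is bounded by the Euclidean norm, so every component satisfies
\[
    q_i(t)\ge c_1e^{-\kappa\lambda_1t/2}\phi_\ast-Re^{-\kappa\lambda_2t/2}.
\]

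Next we show that the right-hand side is nonnegative for $t\ge T_{\rm ord}$ and positive for $t>T_{\rm ord}$. If $R=0$, then $q(t)=c_1e^{-\kappa\lambda_1t/2}\phi_1>0$ for every $t\ge0$, and there is nothing more to prove. If $R>0$, the bound is nonnegative exactly when
\[
    e^{-\kappa(\lambda_2-\lambda_1)t/2}\le\frac{c_1\phi_\ast}{R},
\]
that is, when
\[
    t\ge\frac{2}{\kappa(\lambda_2-\lambda_1)}\log\frac{R}{c_1\phi_\ast}.
\]
This condition uses $\lambda_2>\lambda_1$, and it holds for every $t\ge T_{\rm ord}$ by the definition \eqref{eq:Tord}. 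The positive part in \eqref{eq:Tord} handles the case where the logarithm is negative: there the inequality already holds at $t=0$, strictly. For $t>T_{\rm ord}$ the inequality becomes strict, because the left side is strictly decreasing in $t$, so $q_i(t)>0$. The formula \eqref{eq:phi-star} for $\phi_\ast$ follows because $\sin((i-\tfrac12)\pi/m)$ on $i=1,\ldots,m$ is symmetric about the middle index and minimized at $i=1$ and $i=m$.

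The main obstacle is the boundary case $t=T_{\rm ord}$ when $T_{\rm ord}=0$ is forced by $R/(c_1\phi_\ast)\le1$ with equality. Then the lower bound equals $0$ at $t=0$, so nonnegativity holds but strictness is not needed there, since strict positivity is claimed only for $t>T_{\rm ord}$. Beyond that case, only two facts need care. First, the restriction of the semigroup to $\phi_1^\perp$ decays at rate $\lambda_2$ in the $2$-norm; this follows from orthonormal diagonalization. Second, the passage from the $2$-norm to the componentwise bound must not lose a factor; it does not, since $|y_i|\le\|y\|_2$.
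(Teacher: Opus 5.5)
Your proof is correct and follows essentially the same route as the paper. Like the paper, you split off the slowest mode $c_1\phi_1$, bound the remainder in $\ell^2$ by $Re^{-\kappa\lambda_2 t/2}$, pass to the componentwise lower bound $c_1\phi_\ast e^{-\kappa\lambda_1 t/2}-Re^{-\kappa\lambda_2 t/2}$, read off $T_{\rm ord}$, and get strict positivity for $t>T_{\rm ord}$ from strict monotonicity of the exponential. Your separate handling of $R=0$ and of the positive part in $T_{\rm ord}$ usefully makes explicit what the paper leaves implicit, though the boundary case you flag as the ``main obstacle'' is not actually delicate.
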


\begin{proof}
The higher-mode remainder
\[
    r(t)
    :=
    \sum_{k=2}^m
    c_ke^{-\kappa\lambda_kt/2}\phi_k
\]
satisfies
\[
    \|r(t)\|_2
    \le
    R e^{-\kappa\lambda_2t/2}.
\]
For every coordinate,
\[
\begin{aligned}
    q_i(t)
    &=
    c_1e^{-\kappa\lambda_1t/2}\phi_{1,i}
    +r_i(t)\\
    &\ge
    c_1\phi_\ast e^{-\kappa\lambda_1t/2}
    -
    R e^{-\kappa\lambda_2t/2}.
\end{aligned}
\]
The right-hand side is nonnegative whenever
\[
    e^{-\kappa(\lambda_2-\lambda_1)t/2}
    \le
    \frac{c_1\phi_\ast}{R},
\]
which is exactly the condition encoded in \eqref{eq:Tord}.  For
$t>T_{\rm ord}$ the inequality is strict.
\end{proof}

\subsection{Entry before a geometric or topology event}

The preceding calculation is exact only while the graph remains $P_N$ and
the particle labels retain their spatial order.  For mixed-sign $q^0$, a
gap may initially shrink, so before spectral entry one must exclude three
possibilities: particle crossing, deletion of a path edge, and creation of a
next-nearest-neighbor edge.

For this purpose assume the initial configuration lies in the \emph{strict
path cell}
\begin{equation}
    0<d_i^0<r,
    \qquad i=1,\ldots,m,
    \label{eq:strict-path-nearest}
\end{equation}
and
\begin{equation}
    d_i^0+d_{i+1}^0>r,
    \qquad i=1,\ldots,m-1.
    \label{eq:strict-path-next}
\end{equation}
Define the geometric safety margin
\begin{equation}
    \gamma_0
    :=
    \min
    \left\{
        \min_i d_i^0,\,
        \min_i(r-d_i^0),\,
        \min_i(d_i^0+d_{i+1}^0-r)
    \right\}.
    \label{eq:gamma0}
\end{equation}
Thus $\gamma_0>0$.  The three terms measure the initial distance to,
respectively, particle crossing, path-edge deletion, and next-nearest edge
creation.

Let
\begin{equation}
    D_0
    :=
    \max_i v_i^0-\min_i v_i^0
    \label{eq:D0-spectral}
\end{equation}
be the initial velocity diameter.

\begin{theorem}[A checkable safe-entry condition]
\label{thm:safe-entry}
Assume \eqref{eq:strict-path-nearest}--\eqref{eq:strict-path-next},
$q^0\ne0$, and $c_1>0$.  If
\begin{equation}
    D_0T_{\rm ord}<\gamma_0,
    \label{eq:safe-entry-condition}
\end{equation}
then the actual state-dependent dynamics remains in the strict path cell
until time $T_{\rm ord}$.  In particular,
\[
    G(t)=P_N
    \qquad
    \text{for }0\le t\le T_{\rm ord},
\]
and
\begin{equation}
    q(T_{\rm ord})\ge0.
    \label{eq:q-at-entry}
\end{equation}
Consequently, from time $T_{\rm ord}$ onward the actual trajectory lies in
the expansive regime of Theorem~\ref{thm:order-preservation}.
\end{theorem}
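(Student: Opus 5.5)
The argument is a continuity (bootstrap) argument. Every quantity that determines the communication graph moves at speed at most $D_0$, and the margin $\gamma_0$ is consumed at that rate. Let
\[
    T^\dagger:=\sup\{\,t\in[0,T_{\rm ord}]:\ \text{the actual state lies in the strict path cell on }[0,t]\,\}.
\]
By \eqref{eq:strict-path-nearest}--\eqref{eq:strict-path-next} and continuity of positions, $T^\dagger>0$. On $[0,T^\dagger)$ the spatial order is strict, the path edges are active, and next-nearest pairs are inactive. Since $d_i+d_{i+1}>r$ with all $d_i>0$, every pair at index distance at least two is also inactive. Hence $G(t)=P_N$ on $[0,T^\dagger)$, the actual dynamics coincide with the frozen path dynamics, and $q(t)=e^{-\kappa C_mt/2}q^0$ there by \eqref{eq:path-q-system}.

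The key step is a uniform displacement bound. Proposition~\ref{prop:velocity-hull} applies to any piecewise-classical solution, with no ordering assumption, so $|v_j(s)-v_i(s)|\le D_v(s)\le D_0$ for all $s$ and all $i,j$. Integrating $\dot d_i=q_i$ gives
\[
    |d_i(t)-d_i^0|\le D_0t,
    \qquad
    \bigl|(d_i+d_{i+1})(t)-(d_i^0+d_{i+1}^0)\bigr|\le D_0t,
\]
because $d_i+d_{i+1}=x_{i+2}-x_i$ changes at rate $v_{i+2}-v_i$, which is also bounded by $D_0$. This is important: using the crude bound $|q_i|+|q_{i+1}|\le 2D_0$ would not close the argument.

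Therefore, for $t\le T^\dagger\le T_{\rm ord}$, each of the three margins in \eqref{eq:gamma0} is at least $\gamma_0-D_0t\ge\gamma_0-D_0T_{\rm ord}>0$, using \eqref{eq:safe-entry-condition}. So the strict inequalities hold at $t=T^\dagger$ with a positive margin. If $T^\dagger<T_{\rm ord}$, continuity would extend the cell membership beyond $T^\dagger$, contradicting the definition of the supremum. Hence $T^\dagger=T_{\rm ord}$ and the state lies in the strict cell on the closed interval $[0,T_{\rm ord}]$, which gives $G(t)=P_N$ there. If $T_{\rm ord}=0$ the claim is immediate from the initial cell condition.

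Since the actual trajectory equals the frozen path trajectory on $[0,T_{\rm ord}]$, Proposition~\ref{prop:Tord} gives $q(T_{\rm ord})\ge0$. At that time $x_1<\cdots<x_N$ strictly and the velocities are ordered, so the state lies in the expansive cone. Theorem~\ref{thm:order-preservation}, applied with initial time shifted to $T_{\rm ord}$, then yields the final assertion.

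The main obstacle is justifying that the uniform velocity bound $D_0$ controls the next-nearest separation $x_{i+2}-x_i$ directly rather than through the sum of two adjacent bounds. This is exactly where Proposition~\ref{prop:velocity-hull} is needed, since it holds without ordered velocities. A second point to handle carefully is that the bootstrap must be run with closed-interval conclusions, so that the strict margin at $T^\dagger$ rules out an event occurring exactly at $T^\dagger$.
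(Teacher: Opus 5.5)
Your proposal is correct and follows essentially the same route as the paper: the velocity-hull bound $|v_j-v_i|\le D_0$ is applied directly to each relevant separation, including $x_{i+2}-x_i$. This keeps all three margins in \eqref{eq:gamma0} above $\gamma_0-D_0T_{\rm ord}>0$, and Proposition~\ref{prop:Tord} together with Theorem~\ref{thm:order-preservation} then finishes the argument. The only difference is packaging: the paper bounds the globally defined frozen-path trajectory and then identifies it with the actual one, whereas you run the equivalent continuity (bootstrap) argument directly on the actual trajectory.
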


\begin{proof}
Consider first the frozen-path solution.  Proposition
\ref{prop:velocity-hull} applies to this fixed graph, so its velocity
diameter is bounded by $D_0$.  Hence for every pair $i<j$,
\begin{equation}
    \left|
        [x_j(t)-x_i(t)]
        -
        [x_j^0-x_i^0]
    \right|
    \le D_0t.
    \label{eq:pair-displacement-bound}
\end{equation}
In particular, for $0\le t\le T_{\rm ord}$,
condition \eqref{eq:safe-entry-condition} prevents every adjacent gap from
reaching either $0$ or $r$, and prevents every next-nearest separation
$d_i+d_{i+1}$ from reaching $r$.  No more distant pair can create an edge
before a next-nearest pair does while the spatial ordering remains strict.
Thus the frozen path remains inside the strict path cell through
$T_{\rm ord}$.

Since no boundary of the path cell is reached before $T_{\rm ord}$, the
frozen trajectory is exactly the actual state-dependent trajectory on that
time interval.  Proposition~\ref{prop:Tord} gives
\eqref{eq:q-at-entry}.  The state at $T_{\rm ord}$ therefore satisfies the
velocity ordering required by Theorem~\ref{thm:order-preservation}, which
preserves that ordering thereafter.
\end{proof}

Condition \eqref{eq:safe-entry-condition} is deliberately conservative.  A
sharper trajectory-based sufficient condition can be stated directly from
the frozen trajectory.  Define
\[
    \widehat q(t)
    :=
    e^{-\kappa C_mt/2}q^0
\]
and
\begin{equation}
    \widehat d(t)
    :=
    d^0+
    \frac{2}{\kappa}
    C_m^{-1}
    \left(I-e^{-\kappa C_mt/2}\right)q^0.
    \label{eq:frozen-mixed-d}
\end{equation}
When $c_1>0$, let
\begin{equation}
    \tau_+
    :=
    \inf
    \left\{
        T\ge0:
        \widehat q(t)\ge0
        \text{ for every }t\ge T
    \right\},
    \label{eq:tau-plus}
\end{equation}
which is finite by
Theorem~\ref{thm:spectral-eventual-positive}.  Let $\tau_{\rm exit}$ be the
first time at which the frozen trajectory reaches the boundary of the strict
path cell:
\begin{equation}
\begin{split}
    \tau_{\rm exit}
    :=
    \inf\{t>0:\;&
        \widehat d_i(t)=0
        \text{ for some }i,\ \text{or}\\
        &\widehat d_i(t)=r
        \text{ for some }i,\ \text{or}\\
        &\widehat d_i(t)+\widehat d_{i+1}(t)=r
        \text{ for some }i
    \}.
    \label{eq:tau-exit}
\end{split}
\end{equation}
As usual, the infimum of the empty set is $+\infty$.

\begin{proposition}[Trajectory-based safe-entry condition]
\label{prop:exact-safe-entry}
If
\begin{equation}
    c_1>0
    \qquad\text{and}\qquad
    \tau_+<\tau_{\rm exit},
    \label{eq:exact-entry-condition}
\end{equation}
then the actual trajectory coincides with the frozen path through time
$\tau_+$, enters the expansive cone at $\tau_+$, and thereafter evolves
according to the irreversible edge-deletion theory of
Section~\ref{sec:order}, beginning with the current state at $\tau_+$.
\end{proposition}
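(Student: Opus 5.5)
The plan is to show that, on $[0,\tau_+]$, the frozen path solution and the actual state-dependent solution agree, and then to hand the state at $\tau_+$ to Theorem~\ref{thm:order-preservation}. The first step records what the frozen solution $(\widehat d,\widehat q)$ does on $[0,\tau_{\rm exit})$. By definition of $\tau_{\rm exit}$, for every $t$ in this interval we have $0<\widehat d_i(t)<r$ and $\widehat d_i(t)+\widehat d_{i+1}(t)>r$. The gaps and next-nearest sums are continuous, they start strictly inside the cell by \eqref{eq:strict-path-nearest}--\eqref{eq:strict-path-next}, and they do not hit any boundary value before $\tau_{\rm exit}$. So the positions stay strictly ordered, every nearest-neighbor pair stays active, and every next-nearest pair stays inactive. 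Strict ordering then gives, for $|i-j|\ge3$, $x_j-x_i>\widehat d_i+\widehat d_{i+1}>r$. Hence the communication graph generated by the frozen positions is exactly $P_N$ on $[0,\tau_{\rm exit})$.

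The second step is a uniqueness argument: the actual event-driven solution coincides with the frozen one on $[0,\tau_{\rm exit})$. Let $t^\ast$ be the supremum of times up to which they agree. Up to $t^\ast$ the actual graph equals the graph generated by the frozen positions, which is $P_N$ by the first step. The actual dynamics is therefore the linear system \eqref{eq:path-q-system} with $\dot d=q$, which is exactly the frozen system. By uniqueness for linear ODEs, and since a topology change requires the state to reach a cell boundary, $t^\ast\ge\tau_{\rm exit}$. Because $\tau_+<\tau_{\rm exit}$, the interval $[0,\tau_+]$ lies inside $[0,\tau_{\rm exit})$ and the two trajectories agree there. If $\tau_+=0$, the initial state is already expansive and there is nothing to prove.

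The third step is entry at $\tau_+$. By \eqref{eq:tau-plus}, for every $T>\tau_+$ we have $\widehat q(t)\ge0$ for all $t\ge T$. Continuity gives $\widehat q(\tau_+)\ge0$, and this is the actual $q(\tau_+)$. The positions at $\tau_+$ are strictly ordered by the first step, so the state at $\tau_+$ lies in the expansive cone. Applying Theorem~\ref{thm:order-preservation} and Theorem~\ref{thm:finite-cluster-prediction} with initial time shifted to $\tau_+$ gives the irreversible edge-deletion description from then on. That description may later diverge from the frozen path, but this does not matter.

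The step needing the most care is the second one. The point is that agreement is not circular: the actual graph is determined by the actual positions, which we only know equal the frozen ones up to $t^\ast$. I would handle this with the standard continuation argument. Because the cell inequalities are strict and open, the graph is locally constant near any state strictly inside the cell, so agreement extends past any $t^\ast<\tau_{\rm exit}$. The remaining details are checking the strict-versus-nonstrict cut-off at each of the three boundary types and treating the case $\tau_+=0$.
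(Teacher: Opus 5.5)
Your proposal is correct and follows the paper's route: the frozen path stays strictly inside the path cell on $[0,\tau_+]$, so it coincides with the actual trajectory, $\widehat q(\tau_+)\ge0$, and Theorem~\ref{thm:order-preservation} takes over from $\tau_+$. You also make explicit two points the paper leaves implicit: the continuation argument showing the two trajectories coincide, and the continuity step that derives $\widehat q(\tau_+)\ge0$ from the infimum in \eqref{eq:tau-plus}.
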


\begin{proof}
By definition of $\tau_{\rm exit}$, the frozen path remains strictly inside
the path cell on $[0,\tau_+]$.  Hence no state-dependent topology change or
particle crossing can distinguish the actual trajectory from the frozen
trajectory before $\tau_+$.  At that time
$\widehat q(\tau_+)\ge0$.  Theorem~\ref{thm:order-preservation} then applies
to the actual trajectory from $\tau_+$ onward.
\end{proof}

\subsection{Fragmentation after spectral entry}

The frozen-path vector
\begin{equation}
    d^\ast
    =
    d^0+\frac{2}{\kappa}C_m^{-1}q^0
    \label{eq:mixed-d-star}
\end{equation}
remains useful even when $q^0$ has mixed signs.  Indeed,
\eqref{eq:path-conserved-vector} implies that as long as the graph is a path,
\begin{equation}
    d(t)+\frac{2}{\kappa}C_m^{-1}q(t)
    =
    d^\ast.
    \label{eq:mixed-conserved-vector}
\end{equation}
Thus, if the system safely reaches the expansive cone before leaving the
path cell, the same $d^\ast$ computed from the original mixed initial data
becomes the terminal-gap predictor for the subsequent expansive path
dynamics.

\begin{theorem}[Exact fragmentation criterion after safe spectral entry]
\label{thm:mixed-fragmentation}
Assume either the sufficient hypotheses of
Theorem~\ref{thm:safe-entry} or the trajectory-based entry condition
\eqref{eq:exact-entry-condition}.  Then
\begin{equation}
    \max_i d_i^\ast>r
    \quad\Longleftrightarrow\quad
    \text{finite-time fragmentation},
    \label{eq:mixed-frag-iff}
\end{equation}
where $d^\ast$ is given by \eqref{eq:mixed-d-star}. If
$d_i^\ast\le r$ for every $i$, there is no finite-time fragmentation. In
that case, any equality $d_i^\ast=r$ corresponds only to asymptotic contact
with the interaction boundary.
\end{theorem}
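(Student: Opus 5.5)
The plan is to reduce everything to Theorem~\ref{thm:path-fragmentation}, applied not at time $0$ but at the entry time. Let $T_e$ denote the entry time: $T_e:=T_{\rm ord}$ under the hypotheses of Theorem~\ref{thm:safe-entry}, and $T_e:=\tau_+$ under \eqref{eq:exact-entry-condition}. The argument then has two parts: showing that nothing topological happens on $[0,T_e]$, and recomputing the frozen terminal gaps from the state at $T_e$ using the conserved vector \eqref{eq:mixed-conserved-vector}.

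\emph{Step 1 (no event before entry).} By Theorem~\ref{thm:safe-entry}, respectively Proposition~\ref{prop:exact-safe-entry}, the actual trajectory coincides with the frozen path trajectory on $[0,T_e]$. It stays in the strict path cell \eqref{eq:strict-path-nearest}--\eqref{eq:strict-path-next} there, and it satisfies $q(T_e)\ge0$. Hence $G(t)=P_N$ for $0\le t\le T_e$. In particular no edge is deleted, so no fragmentation occurs on $[0,T_e]$. Moreover, the state at $T_e$ satisfies
\[
x_1<\cdots<x_N,\qquad 0<d_i(T_e)<r,\qquad d_i(T_e)+d_{i+1}(T_e)>r,\qquad q(T_e)\ge0.
\]
These are exactly the hypotheses \eqref{eq:path-nearest}--\eqref{eq:path-ordered-v}, now posed with initial time $T_e$. (Under the trajectory condition one uses that the strict inequalities hold on the closed interval $[0,\tau_+]$ because $\tau_+<\tau_{\rm exit}$.)

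\emph{Step 2 (identify the predictor).} Since the graph is the path on all of $[0,T_e]$, the identity \eqref{eq:mixed-conserved-vector} holds up to and including $T_e$, by continuity. It gives
\[
d(T_e)+\tfrac{2}{\kappa}C_m^{-1}q(T_e)=d^\ast .
\]
The left side is precisely the frozen terminal-gap vector \eqref{eq:path-terminal-gap} for the restarted problem. Applying Theorem~\ref{thm:path-fragmentation} from time $T_e$ therefore yields the following dichotomy. Fragmentation occurs at some finite time $t>T_e$ if and only if $\max_i d_i^\ast>r$. If $d_i^\ast\le r$ for all $i$, the path persists for all finite time, and any index with $d_i^\ast=r$ gives only asymptotic contact. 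Combining this with Step 1, which excludes fragmentation on $[0,T_e]$, proves \eqref{eq:mixed-frag-iff} and the remaining assertions.

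\emph{Main obstacle.} The delicate point is the degenerate case $q(T_e)=0$, together with the strict versus non-strict boundary issues. If $q(T_e)=0$, then $d^\ast=d(T_e)$ and every $d_i(T_e)<r$, so there is no fragmentation. This is consistent with the criterion, and Theorem~\ref{thm:path-fragmentation} covers it anyway, since it allows $q^0=0$. Beyond that, I must check that the restarted data lie in the (non-strict) path cell used by Theorem~\ref{thm:path-fragmentation}, which Step 1 gives. I must also check that the conserved identity is used only on an interval where the graph is genuinely $P_N$, which is exactly what the safe-entry hypotheses guarantee.
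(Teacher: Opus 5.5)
Your proposal is correct and follows the paper's proof. Both restart Theorem~\ref{thm:path-fragmentation} at the safe entry time ($T_{\rm ord}$ or $\tau_+$), where the graph is still $P_N$ and $q\ge0$, and both use the conserved vector \eqref{eq:mixed-conserved-vector} to identify the restarted frozen gap vector with $d^\ast$. Your extra bookkeeping (no fragmentation on $[0,T_e]$, strict path-cell membership at $T_e$) makes explicit what the paper leaves implicit. The degenerate case $q(T_e)=0$ cannot occur, because $c_1>0$ forces $q^0\ne0$ and $e^{-\kappa C_m t/2}$ is invertible.
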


\begin{proof}
Let $T$ denote a safe entry time, either $T_{\rm ord}$ or $\tau_+$.  At time
$T$ the graph is still $P_N$ and $q(T)\ge0$.  By
\eqref{eq:mixed-conserved-vector},
\[
    d^\ast
    =
    d(T)+\frac{2}{\kappa}C_m^{-1}q(T).
\]
Theorem~\ref{thm:path-fragmentation}, applied with time $T$ as the new
initial time, gives exactly \eqref{eq:mixed-frag-iff} and the equality
statement.
\end{proof}

The explicit bound \eqref{eq:Tord} can also be interpreted as a minimum
alignment rate needed to guarantee entry before the initial safety margin is
exhausted.  When $c_1>0$, define
\begin{equation}
    \kappa_{\rm ent}
    :=
    \frac{2D_0}
    {\gamma_0(\lambda_2-\lambda_1)}
    \left[
        \log\frac{R}{c_1\phi_\ast}
    \right]_+,
    \label{eq:kappa-entry}
\end{equation}
with $\kappa_{\rm ent}=0$ if $R=0$.  Then
\begin{equation}
    \kappa>\kappa_{\rm ent}
    \label{eq:kappa-entry-condition}
\end{equation}
implies the sufficient safe-entry condition
\eqref{eq:safe-entry-condition}.

For mixed $q^0$, the accumulated displacement coefficients
$2(C_m^{-1}q^0)_i$ need not all be positive.  Define
\begin{equation}
    \kappa_{\rm frag}^{\rm mix}
    :=
    \max_i
    \frac{
        [\,2(C_m^{-1}q^0)_i\,]_+
    }{
        r-d_i^0
    }.
    \label{eq:kappa-frag-mixed}
\end{equation}
Within the safely entering regime, finite-time fragmentation occurs for
$\kappa<\kappa_{\rm frag}^{\rm mix}$ and does not occur for
$\kappa\ge\kappa_{\rm frag}^{\rm mix}$.  Unlike the critical coupling in
Theorem~\ref{thm:kappa-critical}, however,
$\kappa_{\rm frag}^{\rm mix}$ is \emph{not} a global critical coupling for
arbitrary mixed initial velocities, because the theorem additionally
requires entry before a path-cell exit.  In particular, the parameter range
\[
    \kappa\le\kappa_{\rm ent}
\]
is not certified by the explicit estimate
$\kappa>\kappa_{\rm ent}$ alone.  The sharper condition
$\tau_+<\tau_{\rm exit}$ may nevertheless still hold there, in which case
the exact safe-entry theory remains applicable.

\begin{example}[An open set of non-monotone data with safe entry]
\label{ex:safe-entry}
Take $N=3$, $r=\kappa=1$, $d^0=(0.7,0.7)$, and
$v^0=(0,-0.01,0.59)$, so $q^0=(-0.01,0.60)$. Here
$\phi_1=(1,1)^T/\sqrt2$, $\lambda_1=2$, $\lambda_2=4$,
$c_1=0.59/\sqrt2$, $R=0.61/\sqrt2$, $D_0=0.60$, and $\gamma_0=0.30$.
Consequently
\[
 T_{\rm ord}=\log\frac{0.61\sqrt2}{0.59}<0.380,
 \qquad D_0T_{\rm ord}<0.228<\gamma_0.
\]
Theorem~\ref{thm:safe-entry} therefore certifies entry without a prior
geometric event. Nevertheless,
\[
 d^*=\left(0.7+\frac{3(-0.01)+0.60}{4},\,
           0.7+\frac{-0.01+3(0.60)}{4}\right)
     =(0.8425,1.1475),
\]
so Theorem~\ref{thm:mixed-fragmentation} certifies subsequent finite-time
fragmentation. All the safety, spectral, mixed-sign, and exceedance
inequalities are strict. Their continuous dependence on the initial data
therefore gives an open neighborhood of non-monotone configurations with
the same certified conclusions.
\end{example}

\subsection{A finite-event principle for integrated relaxation}
\label{sec:mechanism}

The ordered-regime argument uses a general feature of switching relaxation
systems. The following formulation separates that feature from the
one-dimensional neighborhood comparison. It is a sufficient structural
criterion; checking its invariant-cone hypothesis is a model-dependent step.

Let $\mathcal A$ be a finite set of possible interactions. For each
$a\in\mathcal A$, fix a vector $b_a\in\R^n$ and threshold $r_a\in\R$,
and let $h_a(x)=b_a^Tx$. The mode is the active set
$E(x)=\{a:h_a(x)<r_a\}$. In mode $E$, consider
\begin{equation}
 \dot x=v,\qquad \dot v=-\kappa L_Ev,
 \label{eq:abstract-dynamics}
\end{equation}
where $L_E$ is constant, and keep $x,v$ continuous at each event.
The admissible modes are those compatible with the state region under
consideration, including the modes reached at its switching boundaries.

\begin{theorem}[Monotone switching driven by integrated relaxation]
\label{thm:monotone-relaxation}
Suppose there is a closed cone $\mathcal K\subseteq\R^n$ such that, for
every admissible mode $E$:
\begin{enumerate}[label=(\roman*)]
\item zero, if present in the spectrum of $L_E$, is semisimple, and every
nonzero eigenvalue has positive real part; write
$\Pi_E=\lim_{t\to\infty}e^{-\kappa L_Et}$;
\item $e^{-\kappa L_Et}\mathcal K\subseteq\mathcal K$ for $t\ge0$, and
$b_a^Tw\ge0$ for every $w\in\mathcal K$ and every $a\in\mathcal A$;
\item $b_a^T\Pi_E=0$ for every active $a\in E$.
\end{enumerate}
Assume $v^0\in\mathcal K$ and that the construction remains in the stated
admissible state region. Then the active sets decrease and at most
$|E(x^0)|$ events occur. At a current state $(x,v,E)$, set
\begin{equation}
 S_a^E=b_a^Tx+\kappa^{-1}b_a^TL_E^\#v,
 \qquad a\in E.
 \label{eq:abstract-terminal}
\end{equation}
The mode is terminal if and only if $S_a^E\le r_a$ for every $a\in E$.
Otherwise the next event is the smallest positive solution, over active
$a$ with $S_a^E>r_a$, of
\begin{equation}
 b_a^Tx+\kappa^{-1}b_a^TL_E^\#
       (I-e^{-\kappa L_Et})v=r_a.
 \label{eq:abstract-hit}
\end{equation}
Each such solution is unique. All minimizers are removed simultaneously,
and iteration gives the exact terminal mode and limiting velocity
$\Pi_{E_\infty}v(t_M)$. Equality $S_a^E=r_a$ alone never causes a finite
frozen event.
\end{theorem}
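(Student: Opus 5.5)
The plan is to copy the structure of Section~\ref{sec:order}, substituting hypotheses (i)--(iii) for each model-specific ingredient. I begin on one fixed mode $E$. By (i), the zero eigenvalue of $L_E$ is semisimple and every other eigenvalue has positive real part. So $e^{-\kappa L_Et}\to\Pi_E$, the group inverse $L_E^\#$ exists, and the argument of Lemma~\ref{lem:group-inverse-integral} goes through word for word. I would work with the Jordan or spectral decomposition instead of symmetric diagonalization, because each nonzero eigenvalue $\lambda$ contributes an integrable term of the form $e^{-\kappa\lambda s}$ times a polynomial. This gives $\int_0^t(e^{-\kappa L_Es}-\Pi_E)\,ds=\kappa^{-1}L_E^\#(I-e^{-\kappa L_Et})$, and hence the analogue of Proposition~\ref{prop:fixed-trajectory}: $x(t)=x^0+t\Pi_Ev^0+\kappa^{-1}L_E^\#(I-e^{-\kappa L_Et})v^0$. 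Applying $b_a^T$ with $a\in E$, condition (iii) removes the ballistic term. So $h_a(x(t))$ equals the left side of \eqref{eq:abstract-hit}, and it converges to $S_a^E$ because $L_E^\#\Pi_E=0$.

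Monotonicity comes next. By (ii), $v(t)=e^{-\kappa L_Et}v^0\in\mathcal K$ whenever $v^0\in\mathcal K$, and then $\frac{d}{dt}h_a(x(t))=b_a^Tv(t)\ge0$ for every $a\in\mathcal A$, whether active or not. Velocities are continuous across events, so $v\in\mathcal K$ persists through every switch into another admissible mode. I would induct over events exactly as in Theorem~\ref{thm:order-preservation}. Every $h_a$ is nondecreasing along the whole trajectory, so an inactive $a$ (with $h_a\ge r_a$) can never become active. The active sets therefore only shrink, and since each event removes at least one element, there are at most $|E(x^0)|$ events.

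The criterion for one edge is the argument of Theorem~\ref{thm:frozen-edge-loss}. The function $f_a(t)=h_a(x(t))$ is real analytic, nondecreasing, starts strictly below $r_a$, and tends to $S_a^E$. If $S_a^E<r_a$, it never reaches $r_a$. If $S_a^E=r_a$, hitting $r_a$ at a finite time would force $f_a\equiv r_a$ on a half-line, hence everywhere by analyticity, contradicting $f_a(0)<r_a$. If $S_a^E>r_a$, the intermediate value theorem gives a hitting time. It is unique: a monotone function can equal $r_a$ only on an interval, and analyticity rules out an interval, so the solution is a single point. With this in hand, the terminal and next-event statements follow as in Theorem~\ref{thm:terminal-graph-next-event}. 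Inactive constraints never switch on, and active ones switch off at their frozen hitting times, so the actual trajectory coincides with the frozen one up to the minimum of those times, and the minimizers are removed together. Iterating as in Theorem~\ref{thm:finite-cluster-prediction} yields the terminal mode $E_\infty$ and the limit $v\to\Pi_{E_\infty}v(t_M)$.

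I expect the main obstacle to be the general case of (i). Without the symmetric similarity of Lemma~\ref{lem:laplacian-similarity}, nontrivial Jordan blocks can occur at nonzero eigenvalues. I would handle them through the Riesz projection $\Pi_E$ and the invertibility of $L_E$ on $\operatorname{range}(I-\Pi_E)$, defining $L_E^\#$ as that inverse extended by zero on $\ker L_E$. A second delicate point is the analyticity argument when $f_a$ is constant from the start. The assumption $f_a(0)<r_a$ rules this out, so the degenerate case never conflicts with the equality clause. Throughout, I rely on the standing assumption that the construction stays within the admissible region, so every mode reached satisfies (i)--(iii).
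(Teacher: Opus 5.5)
Your proposal is correct and follows essentially the same argument as the paper's proof. Exponential decay on the complementary spectral subspace, with Jordan blocks included, gives the group-inverse integral and the fixed-mode trajectory; cone invariance under (ii) makes every $h_a$ nondecreasing, so inactive constraints never return; (iii) removes the ballistic term; and monotonicity plus analyticity settle the equality case, uniqueness of hitting times, and the earliest-hit event selection, which is then iterated. Your explicit definition of $L_E^\#$ as the inverse of $L_E$ on the range of $I-\Pi_E$, extended by zero on $\ker L_E$, spells out a step the paper leaves implicit without changing the route.
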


\begin{proof}
Assumption (i) implies exponential decay on the complementary spectral
subspace, including when positive eigenvalues have Jordan blocks. Hence
\[
 L_E^\#=\int_0^\infty(e^{-L_Es}-\Pi_E)\,ds,
 \qquad
 x(t)=x+t\Pi_Ev+
 \kappa^{-1}L_E^\#(I-e^{-\kappa L_Et})v.
\]
The cone is preserved in each mode and at continuous event updates.
Assumption (ii) therefore makes every $h_a(x(t))$ nondecreasing, so an
inactive interaction cannot return. For an active interaction, (iii)
removes the ballistic term and gives the finite limit
\eqref{eq:abstract-terminal}. Its frozen trajectory is analytic and
nondecreasing, starting strictly below $r_a$. If the limit equals $r_a$,
a finite hit would force constancy on a later interval, hence everywhere,
a contradiction. If the limit exceeds $r_a$, continuity gives a hit;
monotonicity and analyticity give uniqueness. The earliest hit is the first
possible mode change, so the actual and frozen solutions coincide until
then. Updating all minimizers and repeating proves exactness. Each event
removes at least one interaction permanently, which bounds the number of
events and precludes their accumulation. The finite sequence of linear
flows extends globally and its last velocity flow converges to the stated
projection.
\end{proof}

For the alignment system, take $b_{ij}=e_j-e_i$ for $i<j$,
$r_{ij}=r$, and $\mathcal K=\{v:Qv\ge0\}$. Strict position order is
preserved, so these signed observables represent the actual distances.
Lemma~\ref{lem:metzler} verifies (ii), while the fixed-graph spectral
analysis verifies (i) and (iii). Thus the special geometric work is the
verification of a common cone; the group-inverse integration and event
selection do not depend on the uniform self-excluding average. The theorem
requires no reversibility of $L_E$, although applying it to a directed
model would require checking all three hypotheses and would not by itself
identify graph components with velocity clusters.

\subsection{A concrete extension: uniform self-weight}

Replace the local velocity equation, for a non-isolated agent, by
\begin{equation}
 \dot v_i=\kappa\left(
 \frac{\sum_{j\in\Nset_i}v_j+\eta v_i}{n_i+\eta}-v_i\right)
 =\frac{\kappa}{n_i+\eta}\sum_{j\in\Nset_i}(v_j-v_i),
 \qquad \eta\ge0,
 \label{eq:self-weight-model}
\end{equation}
with the same $\eta$ for every agent and $\dot v_i=0$ when $n_i=0$.
Here $\eta=0$ gives the original model, and $\eta=1$ gives the arithmetic
average over the neighborhood including the center. Except on a regular
graph, changing $\eta$ is not a common rescaling of time.

\begin{proposition}[Persistence of the reduction under uniform self-weight]
\label{prop:self-weight}
For every fixed $\eta\ge0$, ordered positions and nondecreasing velocities
remain ordered under \eqref{eq:self-weight-model}. All conclusions of the
finite-event recursion hold with $L_G$ replaced on nontrivial components by
\begin{equation}
 L_{G,\eta}=(D_G+\eta I)^{-1}(D_G-A_G),
 \qquad
 \pi_{i,\eta}^{C}
 =\frac{\deg_C(i)+\eta}{\sum_{j\in C}(\deg_C(j)+\eta)},
 \label{eq:self-weight-laplacian}
\end{equation}
and with zero singleton blocks and singleton projection $1$.
In particular the terminal velocity of a nontrivial component is
$\sum_{i\in C}\pi_{i,\eta}^C v_i(t_M)$.
\end{proposition}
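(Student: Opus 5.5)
The proof will instantiate the abstract finite-event principle, Theorem~\ref{thm:monotone-relaxation}, with the same data as for the original model: $b_{ij}=e_j-e_i$ for $i<j$, $r_{ij}=r$, and $\mathcal K=\{v:Qv\ge0\}$. The mode-$G$ generator is $L_{G,\eta}$ on nontrivial components and $0$ on singletons. Once hypotheses (i)--(iii) are checked for every graph generated by an ordered configuration, that theorem returns monotone separations, irreversible deletion, the terminal test, the next-event selection, and the limiting velocity $\Pi_{G_\infty,\eta}v(t_M)$. The statement about the terminal velocity of a nontrivial component then reads off $\Pi_C=\one_C(\pi^C_\eta)^T$. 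Order preservation of positions follows as in Theorem~\ref{thm:order-preservation}: nonnegative adjacent velocity differences make every gap nondecreasing.

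\textbf{Spectral hypotheses (i) and (iii).} On a nontrivial component $C$ with $W:=D_C+\eta I$ (positive diagonal), the Laplacian is $L_{C,\eta}=W^{-1}(D_C-A_C)$. The similarity $W^{1/2}L_{C,\eta}W^{-1/2}=W^{-1/2}(D_C-A_C)W^{-1/2}$ gives a symmetric positive semidefinite matrix congruent to the combinatorial Laplacian. Its kernel is therefore one-dimensional, so zero is simple, the matrix is diagonalizable, and the rest of the spectrum is positive; this mirrors Lemma~\ref{lem:laplacian-similarity}. The right null vector is $\one_C$. The left null vector is $\one_C^TW$, because $\one^T(D_C-A_C)=0$; normalizing it gives $\pi^C_\eta$, which proves the formula in \eqref{eq:self-weight-laplacian}. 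Singleton blocks are zero, so zero is semisimple globally. Rows of $\Pi$ coincide within a component, so $b_{ij}^T\Pi=0$ for every active edge.

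\textbf{Cone invariance (ii).} This is the main step, and it replaces Lemma~\ref{lem:boundary-acceleration}. By Lemma~\ref{lem:metzler}'s argument it suffices to show $\dot v_{i+1}-\dot v_i\ge0$ whenever $v$ is nondecreasing with $v_i=v_{i+1}=c$.
\begin{itemize}
\item \emph{Case $x_{i+1}-x_i\ge r$.} The sign argument is unchanged, since $\dot v_i=\frac{\kappa}{n_i+\eta}\sum_{j\in\Nset_i}(v_j-c)\le0$ and symmetrically $\dot v_{i+1}\ge0$.
\item \emph{Case $x_{i+1}-x_i<r$.} Write $\dot v_i=\kappa(\operatorname{mean}(M_i\cup\{c^{(\eta)}\})-c)$. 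Here the multiset carries the entry $c$ with extra weight $\eta$, so it is a weighted mean of $M_i$ together with $\eta$ copies of $c$, with total weight $n_i+\eta$. The two weighted multisets share the common entry $c$ (from $v_{i+1}\in M_i$ and $v_i\in M_{i+1}$), and the self-weights contribute the same value $c$ with the same weight $\eta$. These shared parts can be identified.
\end{itemize}
By Lemma~\ref{lem:sliding-window}, passing from the first weighted multiset to the second removes only minimal elements on the left and adds only maximal elements on the right. With positive weights, removing a unit-weight minimum or adding a unit-weight maximum cannot decrease a weighted mean. Hence the second mean is at least the first, which gives the inequality.

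I expect the only subtlety to be confirming that the weighted-mean monotonicity holds with the fractional weight $\eta$ present throughout. It does, because each removal or addition is a unit-weight operation on a multiset whose total weight stays positive. The Metzler property then yields positive semigroups, as in Corollary~\ref{cor:fixed-positive}. Finally, $b_{ij}^Tw=\sum_{k=i}^{j-1}(Qw)_k\ge0$ on $\mathcal K$, so all hypotheses of Theorem~\ref{thm:monotone-relaxation} hold and the conclusions follow.
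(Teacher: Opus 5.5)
Your proposal is correct and follows essentially the same route as the paper. You use the same weighted-multiset sliding argument with a common mass $1+\eta$ at value $c$ for the boundary inequality, then the step-vector Metzler argument with switching induction for cone invariance, the $W=D_C+\eta I$ similarity for the spectral hypotheses, and finally Theorem~\ref{thm:monotone-relaxation} for the recursion and terminal velocities. Your explicit left null vector $\one_C^TW$ and the check $b_{ij}^Tw=\sum_{k=i}^{j-1}(Qw)_k\ge0$ on $\mathcal K$ simply spell out steps the paper leaves implicit.
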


\begin{proof}
At a boundary $v_i=v_{i+1}=c$ of the velocity cone, first suppose the two
adjacent agents are not neighbors. All neighbor velocities of $i$ are at
most $c$, and those of $i+1$ are at least $c$, so their accelerations have
the required opposite signs. If they are neighbors, each weighted average
in \eqref{eq:self-weight-model} contains a common mass $1+\eta$ at value
$c$: one unit comes from the other agent and $\eta$ from the center. The
remaining entries have unit mass. Sliding from $I_i$ to $I_{i+1}$ removes
only smallest values on the left and adds only largest values on the right,
as in Lemma~\ref{lem:boundary-acceleration}. Each operation increases or
preserves the weighted mean, and the common mass keeps the denominator
positive. Thus $\dot v_{i+1}-\dot v_i\ge0$. The step-vector argument of
Lemma~\ref{lem:metzler} and the switching induction of
Theorem~\ref{thm:order-preservation} establish cone invariance and
irreversible deletion. The velocity convex-hull bound also holds.

On a nontrivial connected component set $W=D_C+\eta I$. Then
\[
 W^{1/2}L_{C,\eta}W^{-1/2}
   =W^{-1/2}(D_C-A_C)W^{-1/2}
\]
is symmetric positive semidefinite with a one-dimensional nullspace. Its
stationary weights are those in \eqref{eq:self-weight-laplacian}. The
projection is constant on each component, so active pair differences
annihilate it. Theorem~\ref{thm:monotone-relaxation} now gives the terminal
test and finite recursion, and the stationary projection gives the stated
velocities.
\end{proof}

The extension verifies a family of averaging rules, not arbitrary
heterogeneous or distance-dependent weights. The explicit path matrix and
thresholds of Section~\ref{sec:path} use the original convention $\eta=0$.

\section{Numerical validation}
\label{sec:numerics}

The computations test the exact deterministic predictions derived above.

\subsection{Numerical validation protocol}

The computations below compare two implementations.  The theoretical
implementation evaluates the fixed-path formula
\eqref{eq:path-d-solution}, locates its first boundary hit by bisection,
and, for the cascade example, applies the exact event recursion of
Section~\ref{sec:prediction}.  The comparison implementation integrates the
original agent equations \eqref{eq:x-dynamics}--\eqref{eq:isolated} by the
classical fourth-order Runge--Kutta method on each fixed-graph interval.  It
detects an active edge reaching the strict cut-off boundary, locates that
event by bisection, deletes the edge, and restarts the integration with the
updated graph.  Thus the comparison solver does not use the group-inverse
terminal-separation test to decide whether or when an edge is lost.

For the random-path and cascade experiments the maximum time step is
$2\times10^{-4}$, and an event is localized until the bracketing interval
has relative width at most $10^{-12}$.  For the longer threshold sweep we
use $\kappa\Delta t\le2\times10^{-3}$ and integrate to
$T_{\max}=40/\kappa$.  The random generator seed is $20260905$.  The script
\texttt{generate\_numerical\_validation.py} and the three accompanying CSV
files contain the complete parameter choices and reported values.

The threshold test varies $\kappa$ across the value $\kappa_c$ from
\eqref{eq:kappa-critical}.  The critical value itself is deliberately
omitted from the finite-time sweep: under the strict cut-off convention the
critical edge approaches $r$ only asymptotically.  The graph component count
at $T_{\max}$ is therefore reported as $K(T_{\max})$, rather than being
silently identified with a numerically inferred infinite-time limit.

For the first-event test we generate $20$ samples for each
$N\in\{3,4,5\}$.  With $r=1$, the adjacent gaps are sampled independently
from $\operatorname{Unif}[0.45,0.85]$ and rejected unless every sum of two
consecutive gaps exceeds $1.02$; hence the initial graph is a path.  The
relative velocities are sampled independently from
$\operatorname{Unif}[0.08,1]$, and
$\kappa/\kappa_c\sim\operatorname{Unif}[0.4,0.85]$.  All these samples are
strictly expansive and fragment in finite time.  We measure the discrepancy
by
\[
    \frac{|T_{\rm sim}-T_{\rm pred}|}
         {\max\{1,T_{\rm pred}\}}.
\]

The path experiments test the formulas and their implementation. The
additional checks in Section~\ref{sec:extension-checks} use a non-path
interval graph and the certified non-monotone example; all remain within
the proved hypotheses.

\subsection{Illustrative computations}
\label{sec:illustrative}

We first give three concrete comparisons between the theoretical predictor
and the direct switching solver described above.

\subsubsection{Threshold validation}
For the ordered three-agent path with $r=1$, $d_1^0=d_2^0=0.5$,
$q_1^0=0.8$, $q_2^0=0.2$, the critical coupling
\eqref{eq:kappa-critical} evaluates to $\kappa_c=1.3$.
Figure~\ref{fig:kappa-threshold} sweeps $\kappa/\kappa_c$ across this value
and records $K(T_{\max})$.  The complete three-cluster regime at small
$\kappa$, as well as the two-to-one cluster transition nearest the critical
value, is visible.  The closest sampled ratios below and above the predicted
threshold are $0.992$ and $1.008$, where the observed component counts are
$2$ and $1$, respectively.  This bracket is set only by the sampling grid.

\begin{figure}[htbp]
\centering
\input{fig_kappa_threshold_overpic.tex}
\caption{Observed component count $K(T_{\max})$, with
$T_{\max}=40/\kappa$, for the three-agent path example.  The dashed line is
the predicted critical value $\kappa=\kappa_c$ from
Theorem~\ref{thm:kappa-critical}; the critical value itself is not sampled}
\label{fig:kappa-threshold}
\end{figure}

\subsubsection{First fragmentation time}
For $60$ randomly generated expansive path configurations with
$N\in\{3,4,5\}$ agents (uniformly sampled subject to
\eqref{eq:path-nearest}--\eqref{eq:path-next-nearest} and $\kappa<\kappa_c$),
Figure~\ref{fig:frag-time} compares the first fragmentation time
$T_{\mathrm{frag}}$ of \eqref{eq:path-first-frag-time} against the time of
the first edge loss in the direct integration.  The largest normalized
discrepancy over the $60$ instances is $7.30\times10^{-13}$.

\begin{figure}[htbp]
\centering
\input{fig_frag_time_scatter_overpic.tex}
\caption{Predicted versus directly simulated first fragmentation time for
$60$ expansive path configurations (top), and the base-$10$ logarithm of
the normalized discrepancy (bottom).  Colors distinguish $N=3,4,5$}
\label{fig:frag-time}
\end{figure}

\subsubsection{Recursive terminal-cluster prediction}
Finally we test the complete finite-event recursion of
Section~\ref{sec:prediction} on a four-agent example undergoing three
cascading edge deletions.  With $r=1$, $\kappa=0.5$,
$x^0=(0,\,0.6,\,1.1,\,1.9)$, and $v^0=(0,\,0.3,\,0.7,\,1.6)$, the recursion
\eqref{eq:recursive-terminal-separation}--\eqref{eq:recursive-graph-update}
predicts complete fragmentation into four singleton clusters through three
successive edge-loss events. Figure~\ref{fig:cascade} shows the simulated
particle trajectories together with the predicted event times, and
Table~\ref{tab:cascade} compares the predicted and simulated event times and
terminal velocities.  All event-time discrepancies are below
$6.5\times10^{-13}$, and all terminal-velocity discrepancies are below
$1.5\times10^{-13}$; the latter are at the level of accumulated floating-point
round-off and are therefore reported only to one significant figure.

\begin{figure}[htbp]
\centering
\input{fig_cascade_trajectories_overpic.tex}
\caption{Directly simulated positions for the four-agent cascade (top) and
the three adjacent gaps near the topology changes (bottom).  Dashed vertical
lines mark the event times predicted by the recursion of
Section~\ref{sec:prediction}, and the horizontal dashed line is the cut-off
$d_i=r=1$.  Positions and velocities remain continuous at each event; the
lower panel makes the three boundary crossings visible}
\label{fig:cascade}
\end{figure}

\begin{table}[htbp]
\centering
\caption{Recursive prediction versus direct simulation for the cascading
four-agent example of Figure~\ref{fig:cascade}. Event times are measured
from $t=0$; terminal velocities are the asymptotic values
$v_i(t\to\infty)$.}
\label{tab:cascade}
\small
\begin{tabular}{@{}lccc@{}}
\toprule
Event & Predicted time & Simulated time & $|\text{difference}|$ \\
\midrule
1 (edge $(3,4)$ lost) & $0.239433834830$ & $0.239433834830$ & $3.66\times10^{-13}$ \\
2 (edge $(2,3)$ lost) & $1.635738173075$ & $1.635738173076$ & $6.49\times10^{-13}$ \\
3 (edge $(1,2)$ lost) & $1.924959331603$ & $1.924959331604$ & $4.65\times10^{-13}$ \\
\addlinespace
Agent & Predicted $v_i^\infty$ & Simulated $v_i^\infty$ & $|\text{difference}|$ \\
\midrule
$1$ & $0.200000000000$ & $0.200000000000$ & $4\times10^{-14}$ \\
$2$ & $0.315329957109$ & $0.315329957109$ & $5\times10^{-15}$ \\
$3$ & $0.524539662172$ & $0.524539662172$ & $1\times10^{-13}$ \\
$4$ & $1.500000000000$ & $1.500000000000$ & $1\times10^{-13}$ \\
\bottomrule
\end{tabular}
\end{table}

The agreement is consistent with the chosen integration and event-location
tolerances.  These three comparisons check the path theory and its event recursion.

\FloatBarrier
\subsection{Non-path and self-weight checks}
\label{sec:extension-checks}

To test the recursion beyond paths, take $r=1$, $\kappa=0.5$,
$x^0=(0,0.35,0.70,1.30)$, and $v^0=(0,0.15,0.80,1.60)$.
The initial graph contains all pairs except $(1,4)$. For both $\eta=0$ and
$\eta=1$, the predictor removes $(2,4)$, $(1,3)$, $(3,4)$, and $(2,3)$ in
that order. The first two deletions leave the graph connected; the third
causes fragmentation. Both terminal partitions are
$\{\{1,2\},\{3\},\{4\}\}$, but the event times and limiting velocities
differ (Table~\ref{tab:self-weight}). This example distinguishes loss of a
non-bridge edge from fragmentation and tests recomputation of the
normalization at successive events.

\begin{table}[htbp]
\centering
\caption{Non-path recursion for two self-weights. $T_{\rm loss}$ is the first
edge-loss time, $T_{\rm frag}$ is the first disconnection time, and
$V_{\{1,2\}}^\infty$ is the limiting velocity of the two-agent terminal
component. Values shown are the predictor outputs.}
\label{tab:self-weight}
\begin{tabular}{@{}cccc@{}}
\toprule
$\eta$ & $T_{\rm loss}$ & $T_{\rm frag}$ & $V_{\{1,2\}}^\infty$\\
\midrule
$0$ & $0.0348517232$ & $0.5583514317$ & $0.2117880084$\\
$1$ & $0.0347388949$ & $0.5250986933$ & $0.1614803309$\\
\bottomrule
\end{tabular}
\end{table}

A direct fourth-order Runge--Kutta solver, using the modified agent
accelerations in \eqref{eq:self-weight-model}, a maximum step
$2\times10^{-4}$, and the same event-bracketing tolerance as above,
reproduces all four deletions for each self-weight. The largest absolute
event-time discrepancy is below $7\times10^{-13}$. Comparing the stationary
projection of the directly computed terminal-mode state with the predicted
limiting velocity gives discrepancies below $1.4\times10^{-13}$. This
projection comparison avoids identifying a finite-time velocity with its
limit. These discrepancies describe the observed runs, not rigorous error
bounds for the numerical method.

For the non-monotone data in Example~\ref{ex:safe-entry}, the explicit
certificate gives $T_{\rm ord}=0.3799100105$ and
$D_0T_{\rm ord}=0.2279460063<0.30$. The predicted first fragmentation time
is $0.8863048415$; direct integration agrees within $3\times10^{-13}$.
The certificate excludes any preceding topology change or crossing, so
this comparison uses the actual path trajectory through entry.
The script \texttt{validate\_\allowbreak extensions.py} and its CSV and JSON outputs
record these additional checks, including the complete event sequences. The
reproduction scripts and generated outputs are supplied with the manuscript
as Online Resource~1.

\FloatBarrier
\section{Discussion}
\label{sec:discussion}

At the finite-particle level, a hard interaction cut-off creates a selection
problem between global flocking and fragmentation. The results above solve
that problem exactly in the expansive regime. Monotone separation converts
the total displacement remaining under velocity relaxation into a decision
about whether an active interaction survives. The group inverse measures
that displacement, while the interaction slack measures the distance to
loss of contact. Their comparison determines the next event and, after
iteration, the complete terminal state. The equality case is physically and
mathematically distinct: the particles approach the cut-off asymptotically
without losing the interaction at finite time.

For path configurations, the Green matrix makes the cluster-selection
mechanism explicit. Every initial velocity gradient contributes to the
expansion of every active gap, and the critical alignment rate is the largest
of the corresponding nonlocal gap ratios. Thus the threshold is a collective
finite-size quantity rather than a condition attached to one pair of
particles. The recursion retains this collective character on non-path
graphs because every deletion changes the local normalization and hence the
remaining displacement of all surviving interactions.

Theorem~\ref{thm:monotone-relaxation} identifies the structural ingredients
behind the calculation: an invariant cone, monotone switching observables,
and fixed-mode relaxation with no limiting drift in active relative
coordinates. Proposition~\ref{prop:self-weight} shows that the mechanism is
not tied to the self-excluding convention; it persists for every common
self-weight $\eta\ge0$. The spectral entry result further gives an open set
of initially nonordered data for which the exact theory becomes applicable
before any geometric event.

Several natural extensions require different ideas. With arbitrary mixed
velocities, a separation can overshoot the cut-off and later contract, and
new interactions can be created. Distance-dependent weights vary even while
the active edge set is unchanged, so the constant-matrix group-inverse
formula no longer gives the exact accumulated motion. In higher dimensions
there is no total spatial order that makes all relevant distances monotone.
Convergence theory alone \cite{HendrickxTsitsiklis2013} does not recover the
missing event sequence in these settings.

The analysis is deliberately finite-size. Establishing a kinetic or
mean-field limit would require uniform control of the hard cut-off and of the
normalization near regions of small local mass; existing particle-to-
continuum results for other alignment systems \cite{HaTadmor2008,HaLiu2009}
do not directly provide that control. The exact trajectories, thresholds,
and terminal configurations obtained here supply finite-particle benchmarks
for such a theory. The numerical comparisons verify the implementation on
paths, a non-path interval graph, and a safely entering configuration, while
the proofs apply to the full classes stated in the theorems.

\section*{Statements and Declarations}

\noindent\textbf{Funding.}
The author received no financial support for the research, authorship, or
publication of this article.

\medskip
\noindent\textbf{Competing interests.}
The author has no relevant financial or non-financial interests to disclose.

\medskip
\noindent\textbf{Data availability.}
All numerical data generated for this study and the scripts used to reproduce
the computations are provided with the manuscript as Online Resource~1.

\bibliographystyle{unsrturl}
\bibliography{fragmentation_alignment_JSP}

\end{document}